\theoremstyle{definition}
\newtheorem{definition}{Definition}
\theoremstyle{theorem}
\newtheorem{proposition}[definition]{Proposition}
\newtheorem{lemma}[definition]{Lemma}
\newtheorem{theorem}[definition]{Theorem}
\newtheorem{assumption}[definition]{Assumption}
\newtheorem{corollary}[definition]{Corollary}
\numberwithin{equation}{section}
\numberwithin{definition}{section}
\theoremstyle{remark}
\newtheorem{remark}[definition]{Remark}
\newtheorem{example}[definition]{Example}
\newcommand{\mytag}[2]{%
  \text{#1}%
  \@bsphack
  \begingroup
    \@onelevel@sanitize\@currentlabelname
    \edef\@currentlabelname{%
      \expandafter\strip@period\@currentlabelname\relax.\relax\@@@%
    }%
    \protected@write\@auxout{}{%
      \string\newlabel{#2}{%
        {#1}%
        {\thepage}%
        {\@currentlabelname}%
        {\@currentHref}{}%
      }%
    }%
  \endgroup
  \@esphack
}
\newcounter{term}
\renewcommand*{\theterm}{(\Alph{term})}
  \let\mylabel\label
\newcommand{\mytagg}[1]{%
  \begingroup 
    \refstepcounter{term}%
    \mylabel{#1}%
    \text{\theterm}%
  \endgroup
}
\def\PP{\mathsf P}
\def\EE{\mathsf E}
\def\Zh{\mathbb{Z}_h}
 \def\conv{\mathrm{conv}}
\def\ind{\mathbbm{1}_{[-V,V]}}
\def\sgn{\mathrm{sgn}}
\def\indd{\mathbbm{1}_{[-1,1]}}
\def\measure{\lambda}
\def\diffusion{\sigma^2}
\def\drift{\mu}
\def\Ind{\mathbbm{1}_{[-V,V]^d}}
\def\Indd{\mathbbm{1}_{[-1,1]^d}}
\def\Measure{\lambda}
\def\Diffusion{\Sigma}
\def\Drift{\mu}
\def\hatdiffusion{\hat{\sigma}^2}
\begin{document}
\title{Markov chain approximations for transition densities of L\'evy processes}

\author{Aleksandar Mijatovi\'{c}}
\address{Department of Mathematics, Imperial College London, UK}
\email{a.mijatovic@imperial.ac.uk}

\author{Matija Vidmar}
\address{Department of Statistics, University of Warwick, UK}
\email{m.vidmar@warwick.ac.uk}

\author{Saul Jacka}
\address{Department of Statistics, University of Warwick, UK}
\email{s.d.jacka@warwick.ac.uk}

\thanks{MV acknowledges the support of the Slovene Human Resources Development and Scholarship Fund under contract number 11010-543/2011.}

\begin{abstract}
We consider the convergence of a continuous-time Markov chain approximation $X^h$, $h>0$, to an $\mathbb{R}^d$-valued  L\'evy process $X$. The state space of $X^h$ is an equidistant lattice and its $Q$-matrix is chosen to approximate the generator of $X$. In dimension one ($d=1$), and then under a general sufficient condition for the existence of transition densities of $X$, we establish sharp convergence rates of the normalised probability mass function of $X^h$ to the probability density function of $X$. In higher dimensions ($d>1$), rates of convergence are obtained under a technical condition, which is satisfied when the diffusion matrix is non-degenerate. 
\end{abstract}

\keywords{L\'evy process, continuous-time Markov chain, spectral representation, convergence rates for semi-groups and transition densities}

\subjclass[2000]{60G51}

\maketitle

\allowdisplaybreaks

\section{Introduction}\label{section:Introduction}
Discretization schemes for stochastic processes are relevant both theoretically, as they shed light on the nature of the underlying stochasticity, and practically, since they lend themselves well to numerical methods. L\'evy processes, in particular, constitute a rich and fundamental class with applications in diverse areas such as mathematical finance, risk management, insurance, queuing, storage and population genetics etc. (see e.g.~\cite{kyprianou}). 

\subsection{Short statement of problem and results}\label{subsection:short_statement}
In the present paper, we study the rate of convergence of a weak approximation of an $\mathbb{R}^d$-valued ($d\in \mathbb{N}$) L\'evy process $X$ by a continuous-time Markov chain (CTMC). Our main aim is to understand the rates of convergence of transition densities. These cannot be viewed as expectations of (sufficiently well-behaved, e.g. bounded continuous) real-valued functions against the marginals of the processes, and hence are in general hard to study. 

Since the results are easier to describe in dimension one ($d=1$), we focus first on this setting. Specifically, our main result in this case, Theorem~\ref{theorem:convergencerates}, establishes the precise convergence rate of the normalised  probability mass function of the approximating Markov chain to the transition density of the L\'evy process for the two proposed discretisation schemes, one in the case where $X$ has a non-trivial diffusion component and one when it does not. More precisely, in both cases we approximate $X$ by a CTMC $X^h$  with state space $\Zh:=h\mathbb{Z}$ and $Q$-matrix defined as a natural discretised version of the generator of $X$. This makes the CTMC $X^h$ into a continuous-time random walk, which is skip-free (i.e. simple) if $X$ is without jumps (i.e.  Brownian motion with drift). The quantity $$\kappa(h):=\int_{[-1,1] \backslash [-h,h]}\vert x\vert d\lambda(x),$$ where $\lambda$ is the L\'evy measure of $X$, is related to the activity of the small jumps of $X$ and plays a crucial role in the rate of convergence.  We assume that either the diffusion component of $X$ is present ($\sigma^2>0$) or the jump activity of $X$ is sufficient (Orey's condition \cite[p. 190, Proposition 28.3]{sato}, see also Assumption~\ref{assumption} below) to ensure that $X$ admits continuous transition densities $p_{t,T}(x,y)$ (from $x$ at time $t$ to $y$ at time $T>t$), which are our main object of study. 

Let $P^h_{t,T}(x,y):=\PP(X^h_T=y\vert X^h_t=x)$ denote the corresponding transition probabilities of $X^h$ and let $$\Delta_{T-t}(h):=\sup_{x,y\in \Zh}\left\vert p_{t,T}(x,y)-\frac{1}{h}P^h_{t,T}(x,y)\right\vert.$$ The following table summarizes our result (for functions $f\geq 0$ and $g>0$, we shall write $f=O(g)$ (resp. $f=o(g)$, $f\sim g$) for $\limsup_ {h\downarrow 0}f(h)/g(h)<\infty$  (resp. $\lim_{h\downarrow 0}f(h)/g(h)=0$, $\lim_{h\downarrow 0}f(h)/g(h)\in (0,\infty)$) --- if $g$ converges to $0$, then we will say $f$ decays no slower than (resp. faster than, at the same rate as) $g$):
\begin{center}
\begin{tabular}{|c|c|c|}\hline
 & $\sigma^2>0$ & $\sigma^2=0$\\\hline
$\lambda(\mathbb{R})=0$ &\hspace{0.05cm} $\Delta_{T-t}(h)=O(h^2)$ &\phantom{spacee} $\times$\phantom{spacee}\\\hline
$0<\lambda(\mathbb{R})<\infty$ & $\Delta_{T-t}(h)=O(h)$ & \phantom{spacee}\hspace{0.09cm}$\times$\phantom{spacee}\\\hline
$\lambda(\mathbb{R})=\infty$  & \multicolumn{2}{|c|}{$\Delta_{T-t}(h)=O(h\kappa(h/2))$}\\\hline
\end{tabular}
\end{center}
We also prove that the rates stated here are sharp in the sense that there exist L\'evy processes for which convergence is no better than stated. 

Note that the rate of convergence depends on the L\'evy measure $\lambda$, it being best when $\measure=0$ (quadratic when $\diffusion>0$), and linear otherwise, unless the pure jump part of $X$ has infinite variation, in which case it depends on the quantity $\kappa$. This is due to the nature of the discretisation of the Brownian motion with drift (which gives a quadratic order of convergence, when $\diffusion>0$), and then of the L\'evy measure, which is aggregated over intervals of length $h$ around each of the lattice points; see also \ref{remark:characteristic_exponents:iv} of Remark~\ref{remark:characteristic_exponents}. In the infinite activity case, $\kappa(h)=o(1/h)$, indeed $\kappa$ is bounded, if in addition $\kappa(0)<\infty$. However, the convergence of $h\kappa(h/2)$ to zero, as $h\downarrow0$, can be arbitrarily slow. Finally, if $X$ is a compound Poisson process (i.e. $\lambda(\mathbb R)\in(0,\infty)$) without a diffusion component, but possibly with a drift, there is always an atom present in the law of $X$ at a fixed time, which is why the finite L\'evy measure case is studied only when $\sigma^2>0$.

The proof of Theorem~\ref{theorem:convergencerates} is in two steps: we first establish the convergence rate of the characteristic exponent of $X_t^h$ to that of $X_t$  (Subsection~\ref{exponentials}). In the second step we apply this to the study of the convergence of transition densities (Section~\ref{section:Rates}) via their spectral representations (established in Subsection~\ref{subsection:integralrep}).  Note that in general the rates of convergence of the characteristic functions do not carry over directly to the distribution functions. We are able to follow  through the above programme by exploiting the special structure of the  infinitely divisible distributions in what amounts to a detailed comparison of the transition kernels  $p_{t,T }(x, y)$ and $P^h_{t,T}(x, y)$.

By way of example, note that if $\lambda([-1,1]\backslash [-h,h])\sim 1/h^{1+\alpha}$ for some $\alpha\in (0,1)$, then $\kappa(h)\sim h^{-\alpha}$ and the convergence of the normalized  probability mass function to the transition density is by Theorem~\ref{theorem:convergencerates} of order $h^{1-\alpha}$, since $\kappa(0)=\infty$ and Orey's condition is satisfied. In particular, in the case of the CGMY~\cite{madan} (tempered stable) or $\beta$-stable~\cite[p. 80]{sato} processes with stability parameter $\beta\in(1,2)$, we have $\alpha=\beta-1$ and hence convergence of order $h^{2-\beta}$. More generally, if $\beta:=\inf\{p>0:\int_{[-1,1]}\vert x\vert^pd\measure(x)<\infty\}$ is the Blumenthal-Getoor index, and $\beta\geq 1$, then for any $p>\beta$, $\zeta(\delta)=O(\delta^{2-p})$. Conversely, if for some $p\geq 1$, $\zeta(\delta)=O(\delta^{2-p})$, then $\beta\leq p$. 

This gives the overall picture in dimension one. In dimensions higher than one ($d>1$), and then under a straightforward extension of the discretization described above, essentially the same rates of convergence are obtained as in the univariate case; this time under a technical condition (cf. Assumption~\ref{assumption:multivariate}), which is satisfied when the diffusion-matrix is non-degenerate. Our main result in this case is Theorem~\ref{theorem:convergence:multivariate}. 

\subsection{Literature overview}\label{subsection:literature_review}
In general, there has been a plethora of publications devoted to the subject of discretization schemes for stochastic processes, see e.g. \cite{kloeden}, and with regard to the pricing of financial derivatives \cite{glasserman} and the references therein. In particular, there exists a wealth of literature concerning approximations of L\'evy processes in one form or another and a brief overview of simulation techniques is given by \cite{rosinski}. 

In continuous time, for example, \cite{kiessling} approximates by replacing the small jumps part with a diffusion, and discusses also rates of convergence for $\EE[g\circ X_T]$, where $g$ is real-valued and satisfies certain integrability conditions, $T$ is a fixed time and $X$ the process under approximation; \cite{crosby} approximates by a combination of Brownian motion and sums of compound Poisson processes with two-sided exponential densities. In discrete time, Markov chains have been used to approximate the much larger class of Feller processes and \cite{bottcher} proves convergence in law of such an approximation in the Skorokhod space of c\`adl\`ag paths, but does not discuss rates of convergence; \cite{szimayer} has a finite state-space path approximation and applies this to option pricing together with a discussion of the rates of convergence for the prices. With respect to L\'evy process driven SDEs, \cite{higa} (resp. \cite{tanakahiga}) approximates solutions $Y$ thereto using a combination of a compound Poisson process and a high order scheme for the Brownian component (resp. discrete-time Markov chains and an operator approach) --- rates of convergence are then discussed for expectations of sufficiently regular real-valued functions against the marginals of the solutions. 

We remark that approximation/simulation of L\'evy processes in dimensions higher than one is in general more difficult than in the univariate case, see, e.g. the discussion on this in \cite{cohen} (which has a Gaussian approximation and establishes convergence in the Skorokhod space \cite[p. 197, Theorem 2.2]{cohen}). Observe also that in terms of pricing theory, the probability density function of a process can be viewed as the Arrow-Debreu state price, i.e.  the current value of an option whose payoff equals the Dirac delta function. The singular nature of this payoff makes it hard, particularly in the presence of jumps, to study the convergence of the prices under the discretised process to its continuous counterpart. 

Indeed, Theorem~\ref{theorem:convergence:multivariate} can be viewed as a generalisation of such convergence results for the well-known discretisation of the multi-dimensional Black-Scholes model (see e.g.~\cite{mijatovic} for the case of Brownian motion with drift in dimension one). In addition, existing literature, as specific to approximations of densities of L\'evy processes (or generalizations thereof), includes \cite{lopez} (polynomial expansion for a bounded variation driftless pure-jump process) and \cite{filipovic} (density expansions for multivariate affine jump-diffusion processes). \cite{knopova,sztonyk} study upper estimates for the densities.  On the other hand \cite{bally} has a result similar in spirit to ours, but for solutions to SDEs: for the case of the Euler approximation scheme, the authors there also study the rate of convergence of the transition densities. 

Further, from the point of view of partial integro-differential equations (PIDEs), the density $p:(0,\infty)\times \mathbb{R}^d\to [0,\infty)$ of the L\'evy process $X$ is the classical fundamental solution of the Cauchy problem (in $u\in C^{1,2}_0((0,\infty),\mathbb{R}^d)$) $\frac{\partial u}{\partial t}=Lu$, $L$ being the infinitesimal generator of $X$ \cite[Chapter 12]{conttankov} \cite[Chapter IV]{garronimenaldi}. Note that Assumption~\ref{assumption} in dimension one (resp. Assumption~\ref{assumption:multivariate} in the multivariate case) guarantees $p\in C^{1,\infty}_0$. There are numerous numerical methods in dealing with such PIDEs (and PIDEs in general): fast Fourier transform, trees and discrete-time Markov chains, viscosity solutions, Galerkin methods, see, e.g. \cite[Subsection 1.1]{contvoltchkova} \cite[Subsections 12.3-12.7]{conttankov} and the references therein. In particular, we mention the finite-difference method, which is in some sense the counterpart of the present article in the numerical analysis literature, discretising both in space and time, whereas we do so only in space. In general, this literature often restricts to finite activity processes, and either avoids a rigorous analysis of (the rates of) convergence, or, when it does, it does so for initial conditions $h=u(0,\cdot)$, which exclude the singular $\delta$-distribution. For example, \cite[p. 1616, Assumption 6.1]{contvoltchkova} requires $h$ continuous, piecewise $C^\infty$ with bounded derivatives of all orders; compare also Propositions~\ref{proposition:expectations} and~\ref{propositon:expectations:growth} concerning convergence of expectations in our setting. Moreover, unlike in our case where the discretisation is made outright, the approximation in \cite{contvoltchkova} is sequential, as is typical of the literature: beyond the restriction to a bounded domain (with boundary conditions), there is a truncation of the integral term in $L$, and then a reduction to the finite activity case, at which point our results are in agreement with what one would expect from the linear order of convergence of \cite[p. 1616, Theorem 6.7]{contvoltchkova}. 

\vspace{0.25cm}

\noindent The rest of the paper is organised as follows. Section~\ref{section:Definitions} introduces the setting by specifying the Markov generator of $X^h$ and precisely states the main results. Then Section~\ref{section:Kernels} provides integral expressions for the transition kernels by applying spectral theory to the generator of the approximating chain and studies the convergence of the characteristic exponents. In section~\ref{section:Rates} this allows us to establish convergence rates for the transition densities. While Sections~\ref{section:Kernels} and~\ref{section:Rates} restrict this analysis to the univariate case, explicit comments are made in both, on how to extend the results to the multivariate setting (this extension being, for the most part, direct and trivial). Finally, Section~\ref{section:expectations:numerics} derives some results regarding convergence of expectations $\EE[f\circ X^h_t]\to \EE[f\circ X_t]$ for suitable test functions $f$, presents a numerical algorithm, under which computations are eventually done, discusses the corresponding truncation/localization error and gives some numerical experiments.

\section{Definitions, notation and statement of results}\label{section:Definitions}

\subsection{Setting}

Fix a dimension $d\in \mathbb{N}$ and let $(e_j)_{j=1}^d$ be the standard orthonormal basis of $\mathbb{R}^d$. Further, let $X$ be an $\mathbb{R}^d$-valued L\'evy process with characteristic exponent \cite[pp. 37-39]{sato}: 
\begin{equation}\label{eq:characteristic_exponent}
\Psi(p)=-\frac{1}{2}\left\langle p,\Diffusion p\right\rangle+i\langle \Drift, p\rangle+\int_{\mathbb{R}^d}\left(e^{i\langle p,x\rangle}-i\langle p,x\rangle\Ind(x)-1\right)d\Measure(x)
\end{equation}
($p\in\mathbb{R}^d$). Here $(\Diffusion,\Measure,\Drift)_{\tilde{c}}$ is the characteristic triplet relative to the cut-off function $\tilde{c}=\Ind$; $V$ is 1 or 0, the latter only if $\int_{[-1,1]^d}\vert x\vert d\lambda(x)<\infty$. Note that $X$ is then a Markov process with transition function $P_{t,T}(x,B):=\PP(X_{T-t}\in B-x)$ ($0\leq t\leq T$, $x\in\mathbb{R}^d$ and $B\in\mathcal{B}(\mathbb{R}^d)$) and (for $t\geq 0$, $p\in \mathbb{R}^d$) $\phi_{X_t}(p):=\EE[e^{ip X_t}]=\exp\{t\Psi(p)\}$. We refer to~\cite{bertoin,sato} for the general background on L\'evy processes.

Since $\Diffusion\in \mathbb{R}^d$ is symmetric, nonnegative definite, it is assumed without loss of generality that $\Diffusion=\mathrm{diag}(\diffusion_1,\ldots,\diffusion_d)$ with $ \diffusion_1\geq \cdots\geq \diffusion_d$. We let $l:=\max\{k\in \{1,\ldots,d\}:\diffusion_k>0\}$ ($\max\emptyset:=0$). In the univariate case $d=1$, $\Diffusion$ reduces to the scalar $\diffusion:=\diffusion_1$. 

Now fix $h>0$. Consider a CTMC $X^h=(X^h_t)_{t\geq 0}$ approximating our L\'evy process $X$ (in law). We describe $X^h$ as having \cite{norris} state space $\Zh^d:=h\mathbb{Z}^d:=\{hk:k\in\mathbb{Z}^d\}$ ($\Zh:=\Zh^1$), initial state $X_0^h=0$, a.s. and an infinitesimal generator $L^h$ given by a spatially homogeneous $Q$-matrix $Q^h$ (i.e. $Q^h_{ss'}$ depends only on $s-s'$, for $\{s,s'\}\subset \Zh^d$). Thus $L^h$ is a mapping defined on the set $l^\infty(\Zh^d)$ of bounded functions $f$ on $\Zh^d$, and $L^hf(s)=\sum_{s'\in S}Q^h_{ss'}f(s')$. 

It remains to specify $Q^h$. To this end we discretise on $\Zh^d$ the infinitesimal generator $L$ of the L\'evy process $X$, thus obtaining $L^h$. Recall that \cite[p. 208, Theorem 31.5]{sato}: $$Lf(x)=\sum_{j=1}^d\left(\frac{\diffusion_j}{2}\partial_{jj}f(x)+\mu_j\partial_jf(x)\right)+\int_{\mathbb{R}^d}\left(f(x+y)-f(x)-\sum_{j=1}^dy_j\partial_jf(x)\Ind(y)\right)d\Measure(y)$$ ($f\in C^2_0(\mathbb{R}^d)$, $x\in \mathbb{R}^d$). We specify $L^h$ separately in the univariate, $d=1$, and in the general, multivariate, setting.

\subsubsection{Univariate case}\label{susubsection:setting:univariate}
In the case when $d=1$, we introduce two schemes. Referred to as \textbf{discretization scheme 1} (resp. \textbf{2}), and given by \eqref{equation:infinitesimalgeneratorforY} (resp. \eqref{equation:infinitesimalgeneratorforY2}) below, they differ in the discretization of the first derivative, as follows. 

Under \textbf{discretisation scheme 1}, for $s\in \Zh$ and $f:\Zh\to\mathbb{R}$ vanishing at infinity:
\begin{eqnarray}
\nonumber
L^hf(s) & = &\frac{1}{2}\left(\sigma^2+c^h_0\right)\frac{f(s+h)+f(s-h)-2f(s)}{h^2}+\left(\mu-\mu^h\right)\frac{f(s+h)-f(s-h)}{2h}\\
& + & \sum_{s'\in \Zh\backslash \{0\}}\left[f(s+s')-f(s)\right]c^h_{s'} 
\label{equation:infinitesimalgeneratorforY}
\end{eqnarray}
where the following notation has been introduced: 
\begin{itemize}
\item for $s\in \Zh$: 
$$A_s^h := \begin{cases} 
      [s-h/2,s+h/2), & \textrm{ if $s<0$} \\
      [-h/2,h/2], & \textrm{ if $s=0$} \\
      (s-h/2,s+h/2], & \textrm{ if $s>0$} \\
   \end{cases};
$$
\item for $s\in \Zh\backslash\{0\}$: $c^h_{s}:=\lambda(A_s^h)$;
\item and finally: $$c^h_0:=\int_{A_0^h}y^2\ind(y)d\lambda(y)\hspace{0.5cm}\text{and}\hspace{0.5cm} \mu^h:=\sum_{s\in \Zh\backslash \{0\}}s\int_{A_s^h}\ind(y)d\lambda(y).$$
\end{itemize}

Note that $Q^h$ has nonnegative off-diagonal entries for all $h$ for which: 
\begin{equation}\label{eq:conditionsongenerator}
\frac{\sigma^2+c^h_0}{2h^2}+\frac{\mu-\mu^h}{2h}+c^h_h\geq 0\qquad \mathrm{and}\qquad \frac{\sigma^2+c^h_0}{2h^2}-\frac{\mu-\mu^h}{2h}+c^h_{-h}\geq 0
\end{equation}
and in that case $Q^h$ is a genuine $Q$-matrix. Moreover, due to spatial homogeneity, its entries are then also uniformly bounded in absolute value. 

Further, when $\sigma^2>0$, it will be shown that (\ref{eq:conditionsongenerator}) always holds, at least for all sufficiently small $h$ (see Proposition~\ref{proposition:conditionQmatrix}). However, in general, \eqref{eq:conditionsongenerator} may fail. It is for this reason that we introduce scheme 2, under which the condition on the nonnegativity of off-diagonal entries of $Q^h$ holds vacuously.

To wit, we use in \textbf{discretization scheme 2} the one-sided, rather than the two-sided discretisation of the first derivative, so that (\ref{equation:infinitesimalgeneratorforY}) reads:
\begin{eqnarray}\label{equation:infinitesimalgeneratorforY2}
\nonumber L^hf(s)&=&\frac{1}{2}\left(\sigma^2+c^h_0\right)\frac{f(s+h)+f(s-h)-2f(s)}{h^2}+\sum_{s'\in \Zh\backslash \{0\}}[f(s+s')-f(s)]c^h_{s'}+\\
&+&\!\!\!(\mu-\mu^h)\left(\frac{f(s+h)-f(s)}{h}\mathbbm{1}_{[0,\infty)}(\mu-\mu^h)+\frac{f(s)-f(s-h)}{h}\mathbbm{1}_{(-\infty,0]}(\mu-\mu^h)\right)
\end{eqnarray}

Importantly, while scheme 2 is always well-defined, scheme 1 is not; and yet the two-sided discretization of the first derivative exhibits better convergence properties than the one-sided one (cf. Proposition~\ref{proposition:someestimates}). We therefore retain the treatment of both these schemes in the sequel. 

For ease of reference we also summarize here the following notation which will be used from Subsection~\ref{exponentials} onwards: $$c:=\lambda(\mathbb{R}),\quad b := \kappa(0),\quad d:=\lambda(\mathbb{R}\backslash [-1,1])$$ and for $\delta\in (0, 1]$: $$\zeta(\delta) := \delta\int_{[-1,1]\backslash [-\delta,\delta]} |x|d\lambda(x)\text{ and } \gamma(\delta) := \delta^2\int_{[-1,1]\backslash [-\delta,\delta]} d\lambda(x).$$

\subsubsection{Multivariate case}\label{sububsection:setting:multivariate}
For the sake of simplicity we introduce only one discretisation scheme in this general setting. If necessary, and to avoid confusion, we shall refer to it as the \textbf{multivariate scheme}. We choose $V=0$ or $V=1$, according as $\Measure(\mathbb{R}^d)$ is finite or infinite. $L^h$ is then given by: 

\begin{eqnarray*}
L^hf(s)&=&\frac{1}{2}\sum_{j=1}^d\left(\diffusion_j+c_{0j}^h\right)\frac{f(s+he_j)+f(s-he_j)-2f(s)}{h^2}+\sum_{j=1}^l(\Drift_j-\Drift^h_j)\frac{f(s+he_j)-f(s-he_j)}{2h}+\\
&&\sum_{j=l+1}^d(\Drift_j-\Drift_j^h)\left(\frac{f(s+he_j)-f(s)}{h}\mathbbm{1}_{[0,\infty)}(\Drift_j-\Drift_j^h)+\frac{f(s)-f(s-he_j)}{h}\mathbbm{1}_{(-\infty,0]}(\Drift_j-\Drift_j^h)\right)+\\
&&\sum_{s'\in\Zh^d}\left(f(s+s')-f(s)\right)c_{s'}^h
\end{eqnarray*}
($f\in c_0(\Zh^d)$, $s\in\Zh^d$; and we agree $\sum_\emptyset:=0$). Here the following notation has been introduced: 
\begin{itemize}
\item for $s\in \Zh^d$: $A_s^h:=\prod_{j=1}^d I_{s_j}^h$, where for $s\in \Zh$:
$$I_s^h := \begin{cases} 
      [s-h/2,s+h/2), & \textrm{ if $s<0$} \\
      [-h/2,h/2], & \textrm{ if $s=0$} \\
      (s-h/2,s+h/2], & \textrm{ if $s>0$} \\
   \end{cases}
$$
so that $\{A_s^h:s\in \Zh^d\}$ constitutes a partition of $\mathbb{R}^d$;
\item for $s\in \Zh^d\backslash\{0\}$: $c^h_{s}:=\lambda(A_s^h)$;
\item and finally for $j\in \{1,\ldots,d\}$: $$c_{0j}^h:=\int_{A_0^h}x_j^2\Ind(x)d\Measure(x)\qquad \text{and}\qquad\mu^h_j:=\sum_{s\in \Zh^d\backslash \{0\}}s_j\int_{A_s^h}\Ind(y)d\lambda(y).$$
\end{itemize}
Notice that when $d=1$, this scheme reduces to scheme 1 or scheme 2, according as $\diffusion>0$ or $\diffusion=0$. Indeed, statements pertaining to the multivariate scheme will always be understood to include also the univariate case $d=1$.

\begin{remark}\label{remark:multivariate:setting}
The complete analogue of $c^h_0$ from the univariate case would be the matrix $c_0^h$, entries $(c_0^h)_{ij}:=\int_{A_0^h}x_ix_j\Ind(x)d\Measure(x)$, $\{i,j\}\subset \{1,\ldots,d\}$. However, as $h$ varies, so could $c_0^h$, and thus no diagonalization of $c_0^h+\Diffusion$ possible (in general), simultaneously in all (small enough) positive $h$. Thus, retaining $c_0^h$ in its totality, we should have to discretize mixed second partial derivatives, which would introduce (further) nonpositive entries in the corresponding $Q$-matrix $Q^h$ of $X^h$. It is not clear whether these would necessarily be counter-balanced in a way that would ensure nonnegative off-diagonal entries. Retaining the diagonal terms of $c_0^h$, however, is of no material consequence in this respect.
\end{remark}
It is verified just as in the univariate case, component by component, that there is some $h_\star\in (0,+\infty]$ such that for all $h\in (0,h_\star)$, $L^h$ is indeed the infinitesimal generator of some CTMC (i.e. the off-diagonal entries of $Q^h$ are nonnegative). $Q^h$ is then a regular (as spatially homogeneous) $Q$-matrix, and $X^h$ is a compound Poisson process, whose L\'evy measure we denote $\Measure^h$.

\subsection{Summary of results}
We have, of course: 

\begin{remark}[Convergence in distribution]\label{theorem:convergenceindistribution}
$X^h$ converges to $X$ weakly in finite-dimensional distributions (hence w.r.t. the Skorokhod topology on the space of c\`adl\`ag paths \cite[p. 415, 3.9 Corollary]{jacod}) as $h\downarrow 0$.
\end{remark}

Next, in order to formulate the rates of convergence, recall that $P^h_{t,T}(x,y)$ (resp. $p_{t,T}(x,y)$) denote the transition probabilities (resp. continuous transition densities, when they exist) of $X^h$ (resp. $X$) from $x$ at time $t$ to $y$ at time $T$, $\{x,y\}\subset \Zh^d$, $0\leq t<T$. Further, for $0\leq t<T$ define: 
\begin{equation}\label{eq:fundamental_notation}
\Delta_{T-t}(h):=\sup_{\{x,y\}\subset \Zh^d}D^h_{t,T}(x,y)\qquad\text{where}\qquad D^h_{t,T}(x,y):=\left\vert p_{t,T}(x,y)-\frac{1}{h^d}P^h_{t,T}(x,y)\right\vert.
\end{equation} 

We now summarize the results first in the univariate, and then in the multivariate setting (Remark~\ref{theorem:convergenceindistribution} holding true of both). 

\subsubsection{Univariate case}\label{subsubsection:results:univariate}
The assumption alluded to in the introduction is the following (we state it explicitly when it is being used):

\begin{assumption}\label{assumption}
Either $\sigma^2>0$ or Orey's condition holds:
$$\exists \epsilon\in (0,2)\qquad\text{such that}\qquad \liminf_{r\downarrow 0}\frac{1}{r^{2-\epsilon}}\int_{[-r,r]}u^2d\lambda(u)>0.$$
\end{assumption}
The usage of the two schemes and the specification of $V$ is as summarized in Table~\ref{table:reference}. In short we use scheme 1 or scheme 2, according as $\sigma^2>0$ or $\sigma^2=0$, and we use $V=0$ or $V=1$, according as $\lambda(\mathbb{R})<\infty$ or $\measure(\mathbb{R})=\infty$. By contrast to Assumption~\ref{assumption} we maintain Table~\ref{table:reference} as being in effect throughout this subsubsection.

\begin{table}[!hbt]
\caption{Usage of the two schemes and of $V$ depending on the nature of $\sigma^2$ and $\lambda$.}\label{table:reference}
\begin{center}
\begin{tabular}{|c|c|c|}\hline
 L\'evy measure/diffusion part& $\sigma^2>0$  & $\sigma^2=0$ \\\hline
$\lambda(\mathbb{R})<\infty$ &  scheme 1, $V=0$ & scheme 2, $V=0$\\\hline
$\lambda(\mathbb{R})=\infty$  & scheme 1, $V=1$ & scheme 2, $V=1$\\\hline
\end{tabular}
\end{center}
\end{table}
Under Assumption~\ref{assumption} for every $t>0$, $\phi_{X_t}\in L^1(m)$ where $m$ is Lebesgue measure and (for $0\leq t<T$, $\{x,y\}\subset\mathbb{R}$):

\begin{equation}\label{equation:trnasitionkernelcont}
p_{t,T}(x,y)=\frac{1}{2\pi}\int_\mathbb{R}\exp\left\{ ip(x-y)\right\}\exp\left\{\Psi(p)(T-t)\right\}dp
\end{equation} 
(cf. Remark~\ref{proposition:levydensity}). Similarly, with $\Psi^h$ denoting the characteristic exponent of the compound Poisson process $X^h$ (for $0\leq t<T$, $y\in \Zh$, $ \PP_{X_t^h}$-a.s. in $x\in \Zh$):
 
\begin{equation}\label{equation:trnasitionkerneldiscrete}
\frac{1}{h}P_{t,T}^h(x,y)=\frac{1}{2\pi}\int_{-\frac{\pi}{h}}^{\frac{\pi}{h}}\exp\{ip(x-y)\}\exp\{\Psi^h(p)(T-t)\}dp.
\end{equation}
Note that the right-hand side is defined even if $\PP(X^h_t = x)=0$ and we let the left-hand side take this value when this is so. 

The main result can now be stated.

\begin{theorem}[Convergence of transition kernels]\label{theorem:convergencerates}
Under Assumption~\ref{assumption}, whenever $s>0$, the convergence of $\Delta_s(h)$ is as summarized in the following table. In general convergence is no better than stipulated. 
\begin{center}
\begin{tabular}{|c|c|c|c|c|}\hline
 & $\lambda(\mathbb{R})=0$ & $0<\lambda(\mathbb{R})<\infty$ & $\kappa(0)<\infty=\lambda(\mathbb{R})$ & $\kappa(0)= \infty$\\\hline
$\sigma^2>0$ & $\Delta_s(h)=O(h^2)$ & $\Delta_s(h)=O(h)$ & \multirow{2}{*}{$\Delta_s(h)=O(h)$} & \multirow{2}{*}{$\Delta_s(h)=O(h\kappa(h/2))$}\\\cline{1-3}
$\sigma^2=0$ & $\times$ & $\times$ & &\\\hline
\end{tabular}
\end{center}
\end{theorem}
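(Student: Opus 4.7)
The plan is to use the spectral representations \eqref{equation:trnasitionkernelcont} and \eqref{equation:trnasitionkerneldiscrete}. Fixing $s=T-t>0$ and $\{x,y\}\subset\Zh$, I write
$$p_{t,T}(x,y)-\frac{1}{h}P^h_{t,T}(x,y)=\frac{1}{2\pi}\int_{-\pi/h}^{\pi/h}e^{ip(x-y)}\bigl(e^{s\Psi(p)}-e^{s\Psi^h(p)}\bigr)\,dp-\frac{1}{2\pi}\int_{|p|>\pi/h}e^{ip(x-y)}e^{s\Psi(p)}\,dp,$$
and bound the two terms separately, uniformly in $x,y$, after pulling $|e^{ip(x-y)}|=1$ inside the absolute value. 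Call these the \emph{interior} and the \emph{tail} term.

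The tail is disposed of first. Under Assumption~\ref{assumption} either $\diffusion>0$ gives $\mathrm{Re}\,\Psi(p)\le-\diffusion p^2/2$, or Orey's condition yields $c,\epsilon>0$ with $\mathrm{Re}\,\Psi(p)\le-c|p|^\epsilon$ for $|p|$ large (by lower-bounding $1-\cos(px)$ on $\{|x|\le 1/|p|\}$). Either way $\int_{|p|>\pi/h}|e^{s\Psi(p)}|\,dp$ decays faster than any polynomial in $h$ and is absorbed into every rate in the table.

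For the interior I would apply the complex Lipschitz estimate $|e^{sa}-e^{sb}|\le s|a-b|\exp\bigl(s\max\{\mathrm{Re}\,a,\mathrm{Re}\,b\}\bigr)$ with $a=\Psi(p)$, $b=\Psi^h(p)$, and then invoke the sharp characteristic-exponent estimates from Subsection~\ref{exponentials} (notably Proposition~\ref{proposition:someestimates} together with the auxiliary functions $\kappa$, $\zeta$, $\gamma$). Case by case these produce bounds of the form $|\Psi(p)-\Psi^h(p)|\le C(1+p^2)\,r(h)$ with $r(h)\in\{h^2,h,h\kappa(h/2)\}$ as stipulated; a companion estimate $\mathrm{Re}\,\Psi^h(p)\le\mathrm{Re}\,\Psi(p)+o(1)$ uniformly on $[-\pi/h,\pi/h]$ will let me replace the maximum by $\mathrm{Re}\,\Psi(p)$ up to an absolute constant. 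What remains is then
$$r(h)\cdot\int_\mathbb{R}(1+p^2)e^{s\,\mathrm{Re}\,\Psi(p)}\,dp=O(r(h)),$$
the integral being finite by the same Gaussian/Orey decay used for the tail, which yields the stated upper bound uniformly in $x,y$.

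Sharpness in each cell I would verify by evaluating at $x=y=0$ for a distinguished process and extracting a matching lower bound from a Taylor expansion of $\Psi^h-\Psi$ near a suitable non-zero $p_0$: standard Brownian motion for the $h^2$ rate; Brownian motion plus a single two-sided exponential compound Poisson component for the two $O(h)$ rates; a CGMY/tempered-stable example with $\kappa(h)\sim h^{-\alpha}$ for $h\kappa(h/2)\sim h^{1-\alpha}$. The main obstacle I anticipate is the interior estimate in the regime $\diffusion=0$, $\kappa(0)=\infty$ (scheme 2): here $X^h$ is compound Poisson so $\mathrm{Re}\,\Psi^h$ is bounded below and cannot supply its own decay in $p$, while the one-sided drift discretisation in \eqref{equation:infinitesimalgeneratorforY2} contributes a term of indefinite sign to $\mathrm{Re}\,\Psi^h-\mathrm{Re}\,\Psi$. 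Showing that this perturbation is $o(p^2)$ uniformly on the Brillouin zone $[-\pi/h,\pi/h]$, so that Orey's decay of $\mathrm{Re}\,\Psi$ is preserved throughout, via a careful expansion of the discretised L\'evy integral, is the technical crux that I would expect to take the most care.
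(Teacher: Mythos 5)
Your architecture for the upper bounds --- split the Fourier integrals at $\pm\pi/h$, dispose of the tail via the decay of $\vert\phi_{X_s}\vert$, and control the interior by $\vert e^{sa}-e^{sb}\vert\leq s\vert a-b\vert(\vert e^{sa}\vert\lor\vert e^{sb}\vert)$ combined with the polynomial-in-$\vert p\vert$ estimates on $\Psi^h-\Psi$ --- is exactly that of Proposition~\ref{proposition:transfer} and Lemma~\ref{lemma:keylemma}. However, the step by which you make the interior integral converge is flawed. The companion estimate $\Re\Psi^h(p)\leq\Re\Psi(p)+o(1)$ uniformly on $[-\pi/h,\pi/h]$ is false already for Brownian motion, where $\Re\Psi^h(p)-\Re\Psi(p)=\sigma^2f_h(p)$ is of size $\sigma^2\pi^4/(4!\,h^2)$ at $p=\pm\pi/h$. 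Your fallback for the case you correctly identify as the crux ($\sigma^2=0$, $\kappa(0)=\infty$), namely that $\Re\Psi^h-\Re\Psi$ is $o(p^2)$ uniformly on the Brillouin zone, would not suffice even if it held: Orey's condition only gives $-\Re\Psi(p)\geq C\vert p\vert^\epsilon$ with $\epsilon$ possibly well below $1$, and a perturbation of size $\delta(h)p^2$ evaluated at $p\sim\pi/h$ swamps $\vert p\vert^\epsilon$ unless $\delta(h)=o(h^{2-\epsilon})$, which $\zeta(h/2)$ is not. The paper never compares $\Re\Psi^h$ with $\Re\Psi$; it proves an $h$-uniform coercivity bound on $-\Re\Psi^h$ directly (Proposition~\ref{proposition:uniform_coercivity}), using $1-\cos x\geq 2(x/\pi)^2$ on $[-\pi,\pi]$, discarding the aggregated jumps with $\vert s\vert>\pi/\vert p\vert$, comparing $s^2c^h_s$ with $\int_{A_s^h}u^2d\lambda$, and only then invoking Orey's condition for $\lambda$ itself. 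You need this, or an equivalent direct lower bound, as a separate ingredient.

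The sharpness examples in the two middle columns also do not work as proposed. A compound Poisson component with a two-sided exponential (or any smooth) L\'evy density is aggregated by what is effectively a midpoint rule over the symmetric cells $A_s^h$, so the displacement errors $\int_{A_s^h}(e^{ips}-e^{ipu})\,d\lambda(u)$ cancel to first order and the resulting contribution is $O(h^2)$, not $\asymp h$; such a process cannot witness that $O(h)$ is sharp (and a single compound Poisson component cannot address the $\kappa(0)<\infty=\lambda(\mathbb{R})$ cell at all). The paper's witnesses are purely atomic measures whose atoms sit exactly half-way between the points of $\mathbb{Z}_{h_n}$ along the subsequence $h_n=3^{-n}$ (e.g.\ $\lambda=\frac{1}{2}(\delta_{1/2}+\delta_{-1/2})$), precisely so that the displacement does not cancel. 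Moreover, a Taylor expansion of $\Psi^h-\Psi$ near a single $p_0$ does not yield a lower bound on $D^h_{t,T}(0,0)$, because the error is an oscillatory integral over the whole of $[-K(h),K(h)]$ and could vanish by cancellation; the paper must show that $\int_{-K(h)}^{K(h)}e^{s\Psi(p)}(\Psi^h(p)-\Psi(p))\,dp$ stays bounded away from zero along $(h_n)$, which is the content of the explicit positivity arguments in Propositions~\ref{propositon:convergence_kernels_diffusion_positive} and~\ref{propositon:convergence_kernels_diffusion_zero} and of Lemma~\ref{lemma:somefunction_positive}.
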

More exhaustive statements, of which this theorem is a summary, are to be found in Propositions~\ref{propositon:convergence_kernels_diffusion_positive} and~\ref{propositon:convergence_kernels_diffusion_zero}, and will be proved in Section~\ref{section:Rates}. 

\subsubsection{Multivariate case}\label{subsubsection:results:multivariate}
The relevant technical condition here is: 

\begin{assumption}\label{assumption:multivariate}
There are $\{P,C,\epsilon\}\subset (0,\infty)$ and an $h_0\in (0,h_\star]$, such that for all $h\in (0,h_0)$, $s>0$ and $p\in [-\pi/h,\pi/h]^d\backslash (-P,P)^d$: 
\begin{equation}\label{eq:assumption:multivariate:one}
\vert \phi_{X_s^h}(p)\vert\leq \exp\{-Cs\vert p\vert^\epsilon\}
\end{equation}
whereas for $p\in \mathbb{R}^d\backslash (-P,P)^d$: 
\begin{equation}\label{eq:assumption:multivariate:two}
\vert \phi_{X_s}(p)\vert\leq \exp\{-Cs\vert p\vert^\epsilon\}.
\end{equation}
\end{assumption}
Again we shall state it explicitly when it is being used. 

\begin{remark}\label{remark:multiariate:condition_discussion}
It is shown, just as in the univariate case, that Assumption~\ref{assumption:multivariate} holds if $l=d$, i.e. if $\Diffusion$ is non-degenerate. Moreover, then we may take $P=0$, $C=\frac{1}{2}\left(\frac{2}{\pi}\right)^2\left(\land_{j=1}^d\diffusion_j\right)$, $\epsilon=2$ and $h_0=h_\star$. 

It would be natural to expect that the same could be verified for the multivariate analogue of Orey's condition, which we suggest as being: $$\liminf_{\delta\downarrow 0}\inf_{e\in S^{d-1}} \int_{\overline{B}(0,r)}\vert \langle e,x\rangle\vert^2d\Measure(x)/r^{2-\epsilon}>0$$ for some $\epsilon\in (0,2)$ (with $S^{d-1}\subset\mathbb{R}^d$ (resp. $\overline{B}(0,r)\subset \mathbb{R}^d$) the unit sphere (resp. closed ball of radius $r$ centered at the origin)). Specifically, it is easy to see that \eqref{eq:assumption:multivariate:two} of Assumption~\ref{assumption:multivariate} still holds. However, we are unable to show the validity of \eqref{eq:assumption:multivariate:one}.
\end{remark}
Under Assumption~\ref{assumption:multivariate}, Fourier inversion yields the integral representation of the continuous transition densities for $X$ (for $0\leq t<T$, $\{x,y\}\subset \mathbb{R}^d$): 
\begin{equation*}
p_{t,T}(x,y)=\frac{1}{(2\pi)^d}\int_{\mathbb{R}^d}e^{i\langle p,x-y\rangle}e^{(T-t)\Psi(p)}dp.
\end{equation*}
On the other hand, $L^2([-\pi/h,\pi/h]^d)$ Hilbert space techniques yield for the normalized transition probabilities of $X^h$ (for $0\leq t<T$, $y\in \Zh^d$ and $\PP_{X_t^h}$-a.s. in $x\in\Zh^d$): 
\begin{equation*}
\frac{1}{h^d}P_{t,T}(x,y)=\frac{1}{(2\pi)^d}\int_{[-\pi/h,\pi/h]^d}e^{i\langle p,x-y\rangle}e^{(T-t)\Psi^h(p)}dp,
\end{equation*}
where $\Psi^h$ is the characteristic exponent of $X^h$. 

Finally, we state the result with the help of the following notation:
\begin{itemize}
\item for $\delta\in [0,\infty)$: $\kappa(\delta):=\int_{[-1,1]^d\backslash [-\delta,\delta]^d}\vert x\vert d\Measure(x)$, $\zeta(\delta):=\delta\kappa(\delta)\text{ and }\chi(\delta):=\sum_{1\leq i<j\leq d}\int_{[-\delta,\delta]^d}\vert x_ix_j\vert d\Measure(x)$. 
\item $\hatdiffusion:=\land_{j=1}^d\diffusion_j\text{ and }\diffusion:=\sum_{j=1}^d\diffusion_j$.
\end{itemize}
Note that by the dominated convergence theorem, $(\zeta+\chi)(\delta)\to 0$ as $\delta\downarrow 0$ (this is seen as in the univariate case, cf. Lemma~\ref{lemma:somethingsvnaish}).

\begin{theorem}[Convergence --- multivariate case]\label{theorem:convergence:multivariate}
Let $d\in \mathbb{N}$ and suppose Assumption~\ref{assumption:multivariate} holds. Then for any $s>0$, $\Delta_s(h)=O(h\lor (\zeta+\chi)(h/2))$. Moreover, if $\hatdiffusion>0$, then there exists a universal constant $D_d\in (0,\infty)$, such that:
\begin{enumerate}
\item if $\Measure(\mathbb{R}^d)=0$, $$\limsup_{h\downarrow 0}\frac{\Delta_s(h)}{h^2}\leq D_d\left[\frac{\diffusion}{\hatdiffusion}\frac{1}{\sqrt{s\hatdiffusion}}+\frac{\vert \mu\vert}{\hatdiffusion}\right]\frac{1}{(s\hatdiffusion)^{\frac{d+1}{2}}}.$$
\item if $0<\Measure(\mathbb{R}^d)<\infty$, $$\limsup_{h\downarrow 0}\frac{\Delta_s(h)}{h}\leq D_d\frac{\Measure(\mathbb{R}^d)s}{(s\hatdiffusion)^{\frac{d+1}{2}}}.$$
\item if $\kappa(0)<\infty=\Measure(\mathbb{R}^d)$,  $$\limsup_{h\downarrow 0}\frac{\Delta_s(h)}{h}\leq D_d\left[\Measure(\mathbb{R}^d\backslash [-1,1]^d)s+\frac{\kappa(0)s}{\sqrt{s\hatdiffusion}}\right]\frac{1}{(s\hatdiffusion)^{\frac{d+1}{2}}}.$$
\item if $\kappa(0)=\infty$,   $$\limsup_{h\downarrow 0}\frac{\Delta_s(h)}{(\zeta+\chi)(h/2)}\leq D_d\frac{s}{(s\hatdiffusion)^{\frac{d+2}{2}}}.$$
\end{enumerate}
\end{theorem}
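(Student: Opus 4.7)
The plan is to mirror the two-step strategy announced at the end of Subsection~\ref{subsection:short_statement}: first apply the Fourier/spectral representations of $p_{t,T}$ and $h^{-d}P^h_{t,T}$ displayed just above the statement, then reduce to the characteristic-exponent comparison. Subtracting these representations and taking the supremum over $x,y\in\Zh^d$ gives, for $s:=T-t$,
\[
\Delta_s(h) \le \frac{1}{(2\pi)^d}\int_{[-\pi/h,\pi/h]^d}\!\bigl|e^{s\Psi(p)}-e^{s\Psi^h(p)}\bigr|\,dp \;+\; \frac{1}{(2\pi)^d}\int_{\mathbb{R}^d\setminus[-\pi/h,\pi/h]^d}\!\bigl|e^{s\Psi(p)}\bigr|\,dp.
\]
The tail integral is immediate: by \eqref{eq:assumption:multivariate:two} it is bounded by $\int_{\mathbb{R}^d\setminus[-\pi/h,\pi/h]^d}e^{-Cs|p|^\epsilon}\,dp$, which decays faster than any polynomial in $h$ and is therefore absorbed into the stated rates. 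The main work is the interior integral.

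For the interior integral I would use the elementary estimate $|e^a-e^b|\le|a-b|(|e^a|\vee|e^b|)$, valid when $\Re a\le0$ and $\Re b\le 0$, to obtain the pointwise bound
\[
\bigl|e^{s\Psi(p)}-e^{s\Psi^h(p)}\bigr|\;\le\;s\,|\Psi(p)-\Psi^h(p)|\,\bigl(|e^{s\Psi(p)}|+|e^{s\Psi^h(p)}|\bigr),
\]
and then combine it with Assumption~\ref{assumption:multivariate}: near the origin with the elementary estimate $|e^{s\Psi(p)}|\le1$, and on $[-\pi/h,\pi/h]^d\setminus(-P,P)^d$ with the Gaussian-type decay $e^{-Cs|p|^\epsilon}$ coming from \eqref{eq:assumption:multivariate:one} and \eqref{eq:assumption:multivariate:two}. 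The residual task is to estimate $|\Psi(p)-\Psi^h(p)|$ by the multivariate analogue of the characteristic-exponent bounds of Subsection~\ref{exponentials}: a diffusion-plus-drift contribution of order $h^2|p|^4$ or $h|p|^2$ depending on which derivative scheme is active in the $j$-th coordinate, a compound-Poisson contribution of order $\lambda(\mathbb{R}^d)\cdot h|p|$, and a small-jump contribution. The small-jump term, obtained by Taylor-expanding $e^{i\langle p,x\rangle}-1-i\langle p,x\rangle$ on the cubes $A_s^h$ and comparing with the aggregation prescribed by $L^h$, contributes a factor $(\zeta+\chi)(h/2)\,|p|^2$; the presence of $\chi$ here (and not merely $\zeta$) reflects exactly the off-diagonal second moments of $\measure$ that the diagonal-only scheme (cf.\ Remark~\ref{remark:multivariate:setting}) fails to match.

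Plugging these bounds in, the overall rate $O(h\vee(\zeta+\chi)(h/2))$ follows at once, since the $|p|$-polynomial prefactors are integrated against an $L^1$ weight ($e^{-Cs|p|^\epsilon}$ on the outer region and a constant on the bounded inner region, the latter of $|p|$-volume independent of $h$). When $\hat{\diffusion}>0$, Remark~\ref{remark:multiariate:condition_discussion} permits $\epsilon=2$ and $P=0$, so the full interior integral is controlled by Gaussian moments; after rescaling $p=q/\sqrt{s\hat{\diffusion}}$, $\int_{\mathbb{R}^d}|p|^ke^{-Cs\hat{\diffusion}|p|^2/2}dp=c_{d,k}(s\hat{\diffusion})^{-(d+k)/2}$, which produces the explicit denominators $(s\hat{\diffusion})^{(d+1)/2}$ and $(s\hat{\diffusion})^{(d+2)/2}$ appearing in Cases (1)--(4); the numerators $\diffusion/\hat{\diffusion}$, $|\mu|/\hat{\diffusion}$, $\measure(\mathbb{R}^d)$, $\kappa(0)$, etc., track case-by-case which term in the bound for $|\Psi-\Psi^h|$ dominates. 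I expect the main obstacle to be establishing the sharp multivariate bound on $|\Psi-\Psi^h|$ in the $\kappa(0)=\infty$ regime — specifically, identifying the correct weighting of $\chi(h/2)$ and verifying that the off-diagonal small-jump error is indeed only quadratic in $|p|$ — and then, in the degenerate diffusion setting $l<d$, splicing the Gaussian estimate near the origin with the region-dependent decay provided by Assumption~\ref{assumption:multivariate} on $[-\pi/h,\pi/h]^d\setminus(-P,P)^d$ without losing uniformity in $h$.
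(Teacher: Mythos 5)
Your proposal is correct and follows essentially the same route as the paper: the paper proves this theorem by extending Proposition~\ref{proposition:transfer} (tail/interior split of the Fourier representations, the estimate $\vert e^z-e^v\vert\leq(\vert e^z\vert\lor\vert e^v\vert)\vert z-v\vert$ of Lemma~\ref{lemma:keylemma}, and integration of the polynomial bound on $\vert\Psi^h-\Psi\vert$ against $e^{-Cs\vert p\vert^\epsilon}$) to the multivariate setting, combined with \ref{remark:characteristic_exponents:ii} of Remark~\ref{remark:characteristic_exponents} (componentwise drift/diffusion estimates plus Lemma~\ref{lemma:multivariate} for the L\'evy-measure part, producing the $(\zeta+\chi)(h/2)$ term), Assumption~\ref{assumption:multivariate} and Remark~\ref{remark:multiariate:condition_discussion}. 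Your Gaussian rescaling $p\mapsto q/\sqrt{s\hatdiffusion}$ to extract the explicit $(s\hatdiffusion)^{-(d+k)/2}$ constants in cases (1)--(4) is exactly what the paper's reduction to the integral \eqref{equation:estimate} amounts to when $P=0$ and $\epsilon=2$.
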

\begin{remark}
Notice that in the univariate case $\zeta+\chi$ reduces to $\zeta$. The presence of $\chi$ is a consequence of the omission of non-diagonal entries of $c_0^h$ in the multivariate approximation scheme (cf. Remark~\ref{remark:multivariate:setting}). 
\end{remark}
The proof of Theorem~\ref{theorem:convergence:multivariate} is an easy extension of the arguments behind Theorem~\ref{theorem:convergencerates}, and we comment on this immediately following the proof of Proposition~\ref{proposition:transfer}.

\section{Transition kernels and convergence of characteristic exponents}\label{section:Kernels}
In the interest of space, simplicity of notation and ease of exposition, the analysis in this and in Section~\ref{section:Rates} is \emph{restricted} to dimension $d=1$. Proofs in the multivariate setting are, for the most part, a direct and trivial extension of those in the univariate case. However, when this is not so, necessary and explicit comments will be provided in the sequel, as appropriate. 

\subsection{Integral representations}\label{subsection:integralrep}
First we note the following result (its proof is essentially by the standard inversion theorem, see also \cite[p. 190, Proposition 28.3]{sato}). 

\begin{remark}\label{proposition:levydensity}
Under Assumption~\ref{assumption}, for some $\{P,C,\epsilon\}\subset (0,\infty)$ depending only on $\{\lambda,\sigma^2\}$  and then all $p\in \mathbb{R}\backslash (-P,P)$ and $t\geq 0$: $\vert \phi_{X_t}(p)\vert\leq \exp\{-Ct\vert p\vert^\epsilon\}$. Moreover, when $\sigma^2>0$, one may take $P=0$, $C=\frac{1}{2}\diffusion$ and $\epsilon=2$, whereas otherwise $\epsilon$ may take the value from Orey's condition in Assumption~\ref{assumption}. Consequently, $X_t$ ($t>0$) admits the continuous density $f_{X_t}(y)=\frac{1}{2\pi}\int_\mathbb{R}e^{-ipy}\phi_{X_t}(p)dp$ ($y\in\mathbb{R}$). In particular, the law $P_{t,T}(x,\cdot)$ is given by (\ref{equation:trnasitionkernelcont}).  
\end{remark} 
Second, to obtain (\ref{equation:trnasitionkerneldiscrete}) we apply some classical theory of Hilbert spaces, see e.g.~\cite{dudley}.

\begin{definition}\label{definition:semidiscrete}
For $s\in \Zh$ let $g_s:[-\frac{\pi}{h},\frac{\pi}{h}]\to\mathbb{C}$ be given by $g_s(p):=\sqrt{\frac{h}{2\pi}}e^{-isp}$. The $(g_s)_{s\in \Zh}$ constitute an orthonormal basis of the Hilbert space $L^2([-\frac{\pi}{h},\frac{\pi}{h}])$. 

Let $A\in l^2(\Zh)$. We define:
$\mathcal{F}_hA:=\sum_{s\in \Zh}A(s)g_s$. The inverse of this transform $\mathcal{F}_h^{-1}:L^2([-\frac{\pi}{h},\frac{\pi}{h}])\to l^2(\Zh)$ is given by:
$$(\mathcal{F}^{-1}_h\phi)(s)=\langle \phi,g_s\rangle:=\int_{[-\frac{\pi}{h},\frac{\pi}{h}]}\phi\overline{g_s}dm$$  for $\phi\in L^2([-\frac{\pi}{h},\frac{\pi}{h}])$ and $s\in \Zh$.
\end{definition}

\begin{definition}
For a bounded linear operator $A:l^2(\Zh)\to l^2(\Zh)$, we say $F_A:[-\pi/h,\pi/h]\to\mathbb{R}$ is its \emph{diagonalization}, if $\mathcal{F}_hA\mathcal{F}_h^{-1}\phi=F_{A}\phi$ for all $\phi\in L^2([-\frac{\pi}{h},\frac{\pi}{h}])$.
\end{definition}
We now diagonalize $L^h$, which allows us to establish (\ref{equation:trnasitionkerneldiscrete}). The straightforward proof is left to the reader.

\begin{proposition}\label{proposition:somespectralrrepresentations}
Fix $C\in l^1(\Zh)$. The following introduces a number of bounded linear operators $A:l^2(\Zh)\to l^2(\Zh)$ and gives their diagonalization. With $f\in l^2(\Zh)$, $s\in \Zh$, $p\in [-\frac{\pi}{h},\frac{\pi}{h}]$:  
\begin{enumerate}[(i)]
\item $\Delta_hf(s):=\frac{f(s+h)+f(s-h)-2f(s)}{h^2}$. $F_{\Delta_h}(p)=2\frac{\cos(hp)-1}{h^2}$.
\item $\nabla_hf(s):=\frac{f(s+h)-f(s-h)}{2h}$. $F_{\nabla_h}(p)=i\frac{\sin(hp)}{h}$. 
Under scheme 2 we let $\nabla_h^+f(s):=\frac{f(s+h)-f(s)}{h}$ (resp. $\nabla_h^-f(s):=\frac{f(s)-f(s-h)}{h}$)
and then $F_{\nabla_h^+}(p)=\frac{e^{ihp}-1}{h}$ (resp. $F_{\nabla_h^-}(p)=\frac{1-e^{-ihp}}{h}$).
\item $L_Cf(s):=\sum_{s'\in \Zh}(f(s+s')-f(s))C(s')$. $F_{L_C}(p)=\sum_{s\in \Zh} C(s)(e^{isp}-1)$.
\end{enumerate}
\end{proposition}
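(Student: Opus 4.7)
The overall strategy is to exploit the fact that $\mathcal{F}_h$ intertwines shifts on $\Zh$ with multiplication by plane waves on $[-\pi/h,\pi/h]$: since every operator in the statement is (at least formally) a linear combination of shifts, its diagonalization must be the corresponding linear combination of exponentials. I would therefore begin by reducing everything to a single computational lemma: for $r\in\Zh$ let $\tau_r f(s):=f(s+r)$; then $\tau_r$ is an isometry of $l^2(\Zh)$ and a direct re-indexing gives
\[
(\mathcal{F}_h\tau_r f)(p)=\sqrt{\frac{h}{2\pi}}\sum_{s\in\Zh}f(s+r)e^{-isp}=e^{irp}(\mathcal{F}_hf)(p),
\]
i.e.\ the diagonalization of $\tau_r$ is $p\mapsto e^{irp}$.

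Next I would address boundedness, as this is needed before the word ``diagonalization'' makes sense. For parts (i) and (ii) every operator is a fixed finite linear combination of $\tau_{\pm h}$ and $I$, so boundedness is immediate. For part (iii), writing $L_C=\sum_{s'\in\Zh}C(s')(\tau_{s'}-I)$ and using $\|\tau_{s'}-I\|_{\mathrm{op}}\leq 2$, the absolute summability $C\in l^1(\Zh)$ gives $\|L_C\|_{\mathrm{op}}\leq 2\|C\|_1$, so the series defines a bounded operator on $l^2(\Zh)$.

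With these preliminaries in hand, (i) and (ii) are one-line computations: for the second difference, $\Delta_h=h^{-2}(\tau_h+\tau_{-h}-2I)$ diagonalizes to $h^{-2}(e^{ihp}+e^{-ihp}-2)=2(\cos(hp)-1)/h^2$; the symmetric gradient $\nabla_h=(2h)^{-1}(\tau_h-\tau_{-h})$ diagonalizes to $(2h)^{-1}(e^{ihp}-e^{-ihp})=i\sin(hp)/h$; and the one-sided versions $\nabla_h^{\pm}$ are handled identically, yielding $(e^{\pm ihp}-1)/(\pm h)$ in the obvious way.

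The only step that is not completely automatic is (iii), where one must justify pulling $\mathcal{F}_h$ through an infinite sum of shifts. The plan is: since $L_C=\sum_{s'\in\Zh}C(s')(\tau_{s'}-I)$ converges in operator norm on $l^2(\Zh)$ (by the bound $\|\tau_{s'}-I\|_{\mathrm{op}}\leq 2$ and $C\in l^1$), and since $\mathcal{F}_h$ and $\mathcal{F}_h^{-1}$ are bounded (in fact unitary) linear maps, the conjugated series $\sum_{s'\in\Zh}C(s')(e^{is'p}-1)$ converges in operator norm on $L^2([-\pi/h,\pi/h])$ as a family of multiplication operators. The uniform (in $p$) convergence of the associated symbol then follows from $\sum_{s'}|C(s')|\cdot|e^{is'p}-1|\leq 2\|C\|_1$ and dominated convergence, giving the claimed formula $F_{L_C}(p)=\sum_{s\in\Zh}C(s)(e^{isp}-1)$. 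This interchange of summation and Fourier transform is the only genuine obstacle; everything else is a direct symbolic calculation.
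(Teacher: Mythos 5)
Your proposal is correct, and it is exactly the argument the paper has in mind (the authors explicitly leave this "straightforward proof to the reader"): reduce each operator to shifts $\tau_r$, verify $\mathcal{F}_h\tau_r\mathcal{F}_h^{-1}=e^{irp}\cdot$ by re-indexing, and for (iii) use $\|\tau_{s'}-I\|\leq 2$ together with $C\in l^1(\Zh)$ to justify the operator-norm convergence of the series and hence the interchange with $\mathcal{F}_h$. The only cosmetic quibble is that the uniform convergence of the symbol in (iii) is by the Weierstrass $M$-test rather than "dominated convergence," but the estimate you invoke is precisely the right one.
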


As $\lambda$ is finite outside any neighborhood of $0$, $L^h\vert_{l^2(\Zh)}$ (as in (\ref{equation:infinitesimalgeneratorforY}), resp. (\ref{equation:infinitesimalgeneratorforY2})) is a bounded linear mapping. We denote this restriction by $L^h$ also. Its diagonalization is then given by $\Psi^h:=F_{L^h}$, where, under scheme 1,

\begin{equation}\label{eq:spectralthingy2}
\Psi^h(p)=i(\mu-\mu^h) \frac{\sin(hp)}{h}+(\sigma^2+c_0^h)\frac{(\cos(hp)-1)}{h^2}+\sum_{s\in \Zh\backslash \{0\}}c_s^h\left(e^{isp}-1\right)
\end{equation}
and under scheme 2,
\begin{eqnarray} \nonumber
\Psi^h(p)&=&(\mu-\mu^h)\left(\frac{e^{ihp}-1}{h}\mathbbm{1}_{[0,\infty)}(\mu-\mu^h)+\frac{1-e^{-ihp}}{h}\mathbbm{1}_{(-\infty,0]}(\mu-\mu^h)\right)+\\
& + &(\sigma^2+c_0^h)\frac{(\cos(hp)-1)}{h^2}+\sum_{s\in \Zh\backslash \{0\}}c_s^h\left(e^{isp}-1\right)
\label{eq:spectralthingy}
\end{eqnarray}
(with $p\in [-\frac{\pi}{h},\frac{\pi}{h}]$, but we can and will view $\Psi^h$ as defined for all real $p$ by the formulae above). Under either scheme, $\Psi^h$ is bounded and continuous as the final sum converges absolutely and uniformly. 

\begin{proposition}\label{propositoin:transition_probablities}
For scheme 1 under (\ref{eq:conditionsongenerator}) and always for scheme 2, for every $0\leq t<T$, $y\in \Zh$ and $\PP_{X_t^h}$-a.s. in $x\in \Zh$ (\ref{equation:trnasitionkerneldiscrete}) holds, i.e.:
$$\PP(X_T^h=y\vert X_t^h=x)=\frac{h}{2\pi}\int_{-\frac{\pi}{h}}^{\frac{\pi}{h}}\exp\{ip(x-y)\}\exp\{\Psi^h(p)(T-t)\}dp.$$
\end{proposition}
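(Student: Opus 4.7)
The plan is to diagonalise the bounded semigroup $\{e^{sL^h}\}_{s\ge 0}$ on $l^2(\Zh)$ via the Fourier isometry $\mathcal{F}_h$, and then to apply it to the basis vector $\delta_y$. Since $\lambda$ is finite outside every neighbourhood of zero, $L^h$ restricts to a bounded operator on $l^2(\Zh)$; and under \eqref{eq:conditionsongenerator} (for scheme 1) or unconditionally (for scheme 2), $L^h$ is a bounded $Q$-matrix. Standard CTMC theory (uniqueness of solutions to Kolmogorov's forward equation for a bounded generator, \cite{norris}) then yields the semigroup representation
\begin{equation*}
\PP(X^h_T = y \mid X^h_t = x) = (e^{(T-t)L^h}\delta_y)(x)
\end{equation*}
for every $x$ in the support of the law of $X^h_t$, i.e.\ $\PP_{X^h_t}$-a.s.\ in $x\in\Zh$.

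Next, I invoke Proposition~\ref{proposition:somespectralrrepresentations}, which shows that $\mathcal{F}_h L^h \mathcal{F}_h^{-1}$ acts on $L^2([-\pi/h, \pi/h])$ as multiplication by the bounded continuous function $\Psi^h$ given in \eqref{eq:spectralthingy2} (resp.\ \eqref{eq:spectralthingy}). Because $\mathcal{F}_h$ is a unitary isomorphism, $\mathcal{F}_h (L^h)^n \mathcal{F}_h^{-1}$ equals multiplication by $(\Psi^h)^n$ for every $n\in\mathbb{N}_0$; summing the operator-norm-convergent power series for $e^{(T-t)L^h}=\sum_{n\ge 0}(T-t)^n (L^h)^n/n!$, I conclude that $\mathcal{F}_h e^{(T-t)L^h}\mathcal{F}_h^{-1}$ is multiplication by $e^{(T-t)\Psi^h}$.

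To finish, I apply both sides to $\delta_y\in l^2(\Zh)$. From Definition~\ref{definition:semidiscrete} one reads off $\mathcal{F}_h\delta_y = g_y$, so $\mathcal{F}_h(e^{(T-t)L^h}\delta_y) = e^{(T-t)\Psi^h}g_y$, and applying $\mathcal{F}_h^{-1}$ (i.e.\ taking the inner product with $g_x$) gives
\begin{equation*}
(e^{(T-t)L^h}\delta_y)(x) \;=\; \int_{-\pi/h}^{\pi/h} e^{(T-t)\Psi^h(p)}\, g_y(p)\,\overline{g_x(p)}\, dp \;=\; \frac{h}{2\pi}\int_{-\pi/h}^{\pi/h} e^{ip(x-y)} e^{(T-t)\Psi^h(p)}\, dp,
\end{equation*}
which combined with the semigroup representation of the first paragraph yields \eqref{equation:trnasitionkerneldiscrete}.

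The main technical step is the operator-calculus identity $\mathcal{F}_h e^{sL^h}\mathcal{F}_h^{-1} = e^{s\Psi^h}$, but this reduces cleanly to Proposition~\ref{proposition:somespectralrrepresentations} thanks to the boundedness of $L^h$ and the unitarity of $\mathcal{F}_h$; the remaining steps are routine. The ``$\PP_{X^h_t}$-a.s.\ in $x$'' qualifier is precisely the standard caveat that the conditional probability $\PP(X^h_T = y \mid X^h_t = x)$ is canonically defined only on the support of the law of $X^h_t$, while the right-hand side of \eqref{equation:trnasitionkerneldiscrete} is defined for every $x\in\Zh$ and provides a distinguished version.
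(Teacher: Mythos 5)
Your proposal is correct and follows essentially the same route as the paper: the identity $\PP(X^h_T=y\mid X^h_t=x)=(e^{(T-t)L^h}\mathbbm{1}_{\{y\}})(x)$ combined with the diagonalisation $\mathcal{F}_h L^h\mathcal{F}_h^{-1}=\Psi^h\cdot$ from Proposition~\ref{proposition:somespectralrrepresentations}, extended to the semigroup via the operator power series and evaluated on $\mathbbm{1}_{\{y\}}$. You have merely spelled out the steps (unitarity of $\mathcal{F}_h$, convergence of the exponential series, the computation $g_y\overline{g_x}=\frac{h}{2\pi}e^{ip(x-y)}$) that the paper leaves implicit.
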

\begin{proof} (Condition \eqref{eq:conditionsongenerator} ensures scheme 1 is well-defined ($Q^h$ needs to have nonnegative off-diagonal entries).) Note that: $\PP(X_T^h=y\vert X_t^h=x)=(e^{(T-t)L^h}\mathbbm{1}_{\{y\}})(x)$. Thus \eqref{equation:trnasitionkerneldiscrete} follows directly from the relation $\mathcal{F}_hL^h\mathcal{F}_h^{-1}=\Psi^h\cdot$ (where $\Psi^h\cdot$ is the operator that multiplies functions pointwise by $\Psi^h$).
\end{proof}
In what follows we study the convergence of~(\ref{equation:trnasitionkernelcont}) to~(\ref{equation:trnasitionkerneldiscrete}) as $h\downarrow 0$. These expressions are particularly suited to such an analysis, not least of all because the spatial and temporal components are factorized. 

One also checks that for every $t\geq 0$ and $p\in\mathbb{R}$: $$\phi_{X_t^h}(p)=\EE[e^{ipX_t^h}]=\exp\{t\Psi^h(p)\}.$$ Hence $X^h$ are compound Poisson processes \cite[p. 18, Definition 4.2]{sato}.

In the multivariate scheme, by considering the Hilbert space $L^2([-\pi/h,\pi/h]^d)$ instead, $X^h$ is again seen to be compound Poisson with characteristic exponent given by (for $p\in \mathbb{R}^d$): 
\begin{eqnarray}
\nonumber \Psi^h(p)&=&\sum_{j=1}^d(\diffusion_j+c_{0j}^h)\frac{\cos(hp_j)-1}{h^2}+i \sum_{j=1}^l(\Drift_j-\Drift_j^h)\frac{\sin(hp_j)}{h}\\
\nonumber &+& \sum_{j=l+1}^d(\Drift_j-\Drift_j^h)\left(\frac{e^{ihp_j}-1}{h}\mathbbm{1}_{[0,\infty)}(\Drift_j-\Drift_j^h)+\frac{1-e^{-ihp_j}}{h}\mathbbm{1}_{(-\infty,0]}(\Drift_j-\Drift_j^h)\right)\\
&+&\sum_{s\in \Zh^d\backslash \{0\}}\left(e^{i\langle p,s\rangle}-1\right)c_s^h.
\label{eq:multivariate:exponent}
\end{eqnarray}

In the sequel, we shall let $\lambda^h$ denote the L\'evy measure of $X^h$. 

\subsection{Convergence of characteristic exponents}\label{exponentials}
We introduce for $p\in\mathbb{R}$:
\begin{equation*}
 f_h(p) := \frac{\cos(hp)-1}{h^2}+\frac{p^2}{2}
\end{equation*}
and, under scheme 1: 
\begin{eqnarray*}
g_h(p)& := & i\left(\frac{\sin(hp)}{h}-p\right)\\
l_h(p) & := & c_0^h\frac{\cos(hp)-1}{h^2}-\mu^hi\frac{\sin(hp)}{h}+\!\!\!
\sum_{s\in \Zh\backslash\{0\}}c_s^h\left(e^{isp}-1\right)-\!\!\int_{\mathbb{R}}\left(e^{ipu}-1-ipu\ind(u)\right)d\lambda(u),
\end{eqnarray*}
respectively, under scheme 2:
\begin{eqnarray*}
g_h(p)& := &\frac{e^{ihp}-1}{h}\mathbbm{1}_{(0,\infty)}(\mu-\mu^h)+\frac{1-e^{-ihp}}{h}\mathbbm{1}_{(-\infty,0]}(\mu-\mu^h)-ip;\\
l_h(p) & := & c_0^h\frac{\cos(hp)-1}{h^2}-\mu^h \left[\frac{e^{ihp}-1}{h}\mathbbm{1}_{(0,\infty)}(\mu-\mu^h)+\frac{1-e^{-ihp}}{h}\mathbbm{1}_{(-\infty,0]}(\mu-\mu^h)\right]+\\
&+&\sum_{s\in \Zh\backslash\{0\}}c_s^h\left(e^{isp}-1\right)-\int_{\mathbb{R}}\left(e^{ipu}-1-ipu\ind(u)\right)d\lambda(u).
\end{eqnarray*}
Thus: $$\Psi^h-\Psi=\sigma^2f_h+\mu g_h+l_h.$$

Next, three elementary but key lemmas. The first concerns some elementary trigonometric inequalities as well as the Lipschitz difference for the remainder of the exponential series $f_l(x):=\sum_{k=l+1}^\infty\frac{(ix)^k}{k!}$ ($x\in \mathbb{R}$, $l\in \{0,1,2\}$): these estimates will be used again and again in what follows. The second is only used in the estimates pertaining to the multivariate scheme. Finally, the third lemma establishes key convergence properties relating to $\lambda$.

\begin{lemma}\label{lemma:someestimates} 
For all real $x$: $0\leq \cos(x)-1+\frac{x^2}{2}\leq \frac{x^4}{4!}$, $0\leq \sgn(x)(x-\sin(x))\leq \sgn(x)\frac{x^3} {3!}$ and $0\leq x^2+2(1-\cos(x))-2x\sin(x)\leq x^4/4.$ Whenever $\{x,y\}\subset \mathbb{R}$ we have (with $\delta:=y-x$):
\begin{enumerate} 
\item\label{lemma:someestimates:i} $\vert e^{ix}-1-(e^{iy}-1)\vert^2\leq \delta^2$. 
\item\label{lemma:someestimates:ii} $\vert e^{ix}-1-ix-(e^{iy}-1-iy)\vert^2\leq \delta^4/4+\delta^2x^2+\vert \delta\vert^3\vert x\vert$.
\item\label{lemma:someestimates:iii} $\vert e^{ix}-1-ix+x^2/2-(e^{iy}-1-iy+y^2/2)\vert^2\leq \delta^6/36+\vert \delta\vert^5\vert x\vert/6+(5/12)\delta^4x^2+\vert\delta\vert^3\vert x\vert^3/2+\delta^2x^4/4$.
\end{enumerate}
\end{lemma}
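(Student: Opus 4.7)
The three trigonometric inequalities at the head of the lemma are standard Taylor-remainder estimates. My plan is to treat each by exhibiting a function that vanishes at zero together with enough derivatives, and then establishing a sign property for the next one. For $\cos(x)-1+x^2/2$, the second derivative is $1-\cos(x)\ge 0$, so the function is convex with a critical point at $0$; this yields the lower bound, while the upper bound follows (using evenness) from integrating $\sgn(x)(x-\sin(x))\le \sgn(x)x^3/6$ twice. For $x-\sin(x)$, the lower bound is immediate from $1-\cos(x)\ge 0$, and the upper bound from integrating the analogous estimate for its second derivative. For $H(x):=x^2+2(1-\cos(x))-2x\sin(x)$, a direct computation gives the pleasant identity $H'(x)=2x(1-\cos(x))$, from which non-negativity is immediate; for the upper bound I would insert $1-\cos(x)\le x^2/2$ into $H'$ and integrate from $0$, appealing to the evenness of $H$ for $x<0$.

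For parts (i)--(iii), the uniform Taylor bounds I will rely on are $|e^{it}-1|\le |t|$, $|e^{it}-1-it|\le t^2/2$ and $|e^{it}-1-it+t^2/2|\le |t|^3/6$, all obtained by successive integration starting from the first of the trigonometric inequalities above. The key step is then a telescoping algebraic identity: writing $\delta:=y-x$ and using $e^{iy}=e^{ix}e^{i\delta}$, one verifies by direct expansion that
\begin{align*}
(e^{iy}-1)-(e^{ix}-1) &= e^{ix}(e^{i\delta}-1), \\
(e^{iy}-1-iy)-(e^{ix}-1-ix) &= e^{ix}(e^{i\delta}-1-i\delta)+i\delta(e^{ix}-1), \\
(e^{iy}-1-iy+\tfrac{y^2}{2})-(e^{ix}-1-ix+\tfrac{x^2}{2}) &= e^{ix}(e^{i\delta}-1-i\delta+\tfrac{\delta^2}{2})+i\delta(e^{ix}-1-ix)-(e^{ix}-1)\tfrac{\delta^2}{2}.
\end{align*}
Each term in the decomposition has the shape (factor depending only on $x$) times (factor depending only on $\delta$), and crucially the two factors always sit at matching Taylor orders. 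Applying the triangle inequality followed by the three Taylor bounds above gives a linear estimate for the absolute value of the left-hand side as a sum of positive monomials in $|\delta|$ and $|x|$; squaring then reproduces the right-hand side of each inequality term by term.

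The main obstacle will be isolating the correct algebraic decomposition in (iii): the naive grouping produces a spurious residual of the form $i\delta(e^{ix}-1)+\delta x$, which must be recognised as $i\delta(e^{ix}-1-ix)$ (exploiting $-i^{2}=1$) in order to match the desired Taylor order. Once this regrouping is made, matching the coefficients $\frac{1}{36},\frac{1}{6},\frac{5}{12},\frac{1}{2},\frac{1}{4}$ on the right-hand side of (iii) is just a routine expansion of the square of the three-term linear bound $|\delta|^3/6+|\delta|x^2/2+\delta^2|x|/2$; correspondingly for (ii) one squares the two-term bound $\delta^2/2+|\delta||x|$.
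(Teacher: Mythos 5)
Your proof is correct, but for parts \eqref{lemma:someestimates:ii} and \eqref{lemma:someestimates:iii} it takes a genuinely different route from the paper's. The paper expands each squared modulus directly into real trigonometric expressions --- for \eqref{lemma:someestimates:iii} a five-term decomposition of $\vert e^{ix}-ix+x^2/2-e^{iy}+iy-y^2/2\vert^2$ --- and bounds each term using the head inequalities together with further auxiliary estimates such as $\vert \sin(x)-x\cos(x)\vert\leq \vert x\vert^3/3$ and $0\leq 1-\cos(x)-x\sin(x)+x^2/2\leq x^4/8$, all verified by comparing derivatives. You instead factor the difference \emph{before} taking moduli, via $e^{iy}=e^{ix}e^{i\delta}$, into a telescoping sum whose terms pair an $x$-factor with a $\delta$-factor at complementary Taylor orders, then apply the triangle inequality with the remainder bounds $\vert e^{it}-1\vert\leq \vert t\vert$, $\vert e^{it}-1-it\vert\leq t^2/2$, $\vert e^{it}-1-it+t^2/2\vert\leq \vert t\vert^3/6$, and square. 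Your identities check out (including the regrouping $i\delta(e^{ix}-1)+\delta x=i\delta(e^{ix}-1-ix)$), and the square of the three-term bound $\vert\delta\vert^3/6+\vert\delta\vert x^2/2+\delta^2\vert x\vert/2$ reproduces the right-hand side of \eqref{lemma:someestimates:iii} coefficient by coefficient, the term $(5/12)\delta^4x^2$ arising as $1/4+1/6$; likewise $(\delta^2/2+\vert\delta\vert\vert x\vert)^2$ gives \eqref{lemma:someestimates:ii} exactly. Your route is shorter, avoids the auxiliary real-variable inequalities, and makes the provenance of the constants transparent; the paper's route works entirely with real trigonometric quantities, which is what its later estimates actually manipulate. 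Two cosmetic points: the upper bound on $\cos(x)-1+x^2/2$ needs only \emph{one} integration of $\sgn(x)(x-\sin(x))\leq \sgn(x)x^3/6$, since $x-\sin(x)$ is already the first derivative (``twice'' would be correct starting from $1-\cos(x)\leq x^2/2$); and your identity $H'(x)=2x(1-\cos(x))$ for the third head inequality is a clean shortcut the paper leaves implicit.
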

\begin{proof}
The first set of inequalities may be proved by comparison of derivatives. Then, \eqref{lemma:someestimates:i} follows from $\vert e^{i(x-y)}-1\vert^2=2(1-\cos(x-y))$ and $\vert e^{iy}\vert=1$; \eqref{lemma:someestimates:ii} from $$\vert e^{ix}-ix-e^{iy}+iy\vert^2=\left(\delta^2+2(1-\cos(\delta))-2\delta\sin(\delta)\right)-2\delta(\cos(x)-1)\sin(\delta)+2\delta\sin(x)(1-\cos(\delta))$$ and finally \eqref{lemma:someestimates:iii} from the decomposition of $\vert e^{ix}-ix+x^2/2-e^{iy}+iy-y^2/2\vert^2$ into the following terms: 

\begin{enumerate}
\item $2(1-\cos(\delta))+\delta^2+\delta^4/4-2\delta\sin(\delta)-(1-\cos(\delta))\delta^2\leq \delta^6/36$ for any real $\delta$.
\item $\delta^3x-\sin(x)\sin(\delta)\delta^2=\delta^2(\delta(x-\sin(x))+\sin(x)(\delta-\sin(\delta)))\leq \vert\delta\vert^3\vert x\vert^3/6+\vert\delta\vert^5\vert x\vert/6$. 
\item $-2(1-\cos(\delta))\delta x+2\delta x(1-\cos(x))(1-\cos(\delta))+2\delta(1-\cos(\delta))\sin(x)=2\delta(1-\cos(\delta))(x(1-\cos(x))+\sin(x)-x)\leq \vert \delta\vert^3\vert x\vert^3/3$, since for all real $x$ one has $\vert \sin(x)-x\cos(x)\vert\leq \vert x\vert^3/3$.
\item $-(\cos(x)-1)(1-\cos(\delta))\delta^2\leq x^2\delta^4/4$.
\item $\delta^2x^2-2\delta x\sin(x)\sin(\delta)-2\delta\sin(\delta)(\cos(x)-1)=x^2\delta(\delta-\sin(\delta))+2\delta\sin(\delta)(1-\cos(x)-x\sin(x)+x^2/2)\leq \delta^4x^2/6+\delta^2x^4/4$ since for all real $x$, one has $0\leq 1-\cos(x)-x\sin(x)+x^2/2\leq x^4/8$.
\end{enumerate}
The latter inequalities are again seen to be true by comparing derivatives. 
\end{proof}

\begin{lemma}\label{lemma:multivariate}
Let $\{p,x,y\}\subset\mathbb{R}^d$. Then:
\begin{enumerate}
\item $\vert (e^{i\langle p,x\rangle}-1)-(e^{i\langle p,y\rangle}-1)\vert\leq \vert p\vert \vert x-y\vert$.
\item $\vert (e^{i\langle p,x\rangle}-i\langle p,x\rangle-1)-(e^{i\langle p,y\rangle}-i\langle p,y\rangle-1)\vert\leq 2\vert p\vert^2(\vert x\vert+\vert y\vert)\vert x-y\vert$. 
\end{enumerate}
\end{lemma}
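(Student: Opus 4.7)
The plan is to reduce both inequalities to scalar-valued statements by setting $\alpha:=\langle p,x\rangle$ and $\beta:=\langle p,y\rangle$, so that each left-hand side becomes the modulus of a difference of an elementary function evaluated at two real numbers. Cauchy--Schwarz gives $|\alpha-\beta|=|\langle p,x-y\rangle|\leq |p||x-y|$, $|\alpha|\leq |p||x|$ and $|\beta|\leq |p||y|$, which is exactly the bridge back to the vector-valued estimates the lemma asks for.

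For part (1), I would use the elementary identity $|e^{i\alpha}-e^{i\beta}|=2|\sin((\alpha-\beta)/2)|\leq |\alpha-\beta|$, so
\begin{equation*}
|(e^{i\langle p,x\rangle}-1)-(e^{i\langle p,y\rangle}-1)|=|e^{i\alpha}-e^{i\beta}|\leq |\alpha-\beta|\leq |p||x-y|.
\end{equation*}

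For part (2), the quantity to estimate is $F(\alpha)-F(\beta)$ where $F(t):=e^{it}-it-1$. Since $F'(t)=i(e^{it}-1)$ and $|e^{it}-1|\leq |t|$, the fundamental theorem of calculus gives
\begin{equation*}
|F(\alpha)-F(\beta)|\leq \left|\int_\beta^\alpha |e^{it}-1|\,dt\right|\leq \left|\int_\beta^\alpha |t|\,dt\right|\leq \tfrac{1}{2}|\alpha-\beta|(|\alpha|+|\beta|),
\end{equation*}
where the last step is a direct computation of $\int|t|\,dt$ in the three cases according to the signs of $\alpha,\beta$ (in particular when $\beta<0<\alpha$, one uses $(\alpha-\beta)^2=\alpha^2+\beta^2+2|\alpha\beta|\geq \alpha^2+\beta^2$). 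Substituting the Cauchy--Schwarz estimates gives $|F(\alpha)-F(\beta)|\leq \tfrac{1}{2}|p|^2(|x|+|y|)|x-y|$, which is in fact sharper than the stated bound by a factor of $4$.

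There is no real obstacle here: both estimates are essentially calculus, and the statement of the lemma is generous enough to absorb the constants we obtain. The only minor care point is the case analysis in evaluating $|\int_\beta^\alpha|t|\,dt|$, but this is routine.
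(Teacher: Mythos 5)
Your proof is correct, and it takes a mildly different route from the paper's. The paper disposes of this lemma in one line by invoking the complex Mean Value Theorem of Evard--Jafari (the same tool it reuses later in the proof of Proposition 5.4) together with Cauchy--Schwarz; applied to $t\mapsto e^{it}$ and $t\mapsto e^{it}-it$ after the same scalar reduction $\alpha=\langle p,x\rangle$, $\beta=\langle p,y\rangle$, that theorem produces a bound involving the sum of two evaluations of the derivative, which is where the constant $2$ in part (2) comes from. You replace that citation with elementary calculus: the identity $\vert e^{i\alpha}-e^{i\beta}\vert=2\vert\sin((\alpha-\beta)/2)\vert$ for part (1), and the fundamental theorem of calculus with $\vert F'(t)\vert=\vert e^{it}-1\vert\leq\vert t\vert$ for part (2), including the correct three-way sign analysis of $\bigl\vert\int_\beta^\alpha\vert t\vert\,dt\bigr\vert$ (the mixed-sign case $(\alpha-\beta)^2\geq\alpha^2+\beta^2$ is the only point needing care, and you handle it). What your approach buys is self-containedness --- no external reference needed --- and a sharper constant ($\tfrac12$ in place of $2$ in part (2)), which is harmless since the lemma is only used for order-of-magnitude estimates; what the paper's approach buys is brevity and uniformity with the argument it reuses in Section 5.
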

\begin{proof}
This is an elementary consequence of the complex Mean Value Theorem \cite[p. 859, Theorem 2.2]{evard} and the Cauchy-Schwartz inequality. 
\end{proof}

\begin{lemma}\label{lemma:somethingsvnaish}
For any L\'evy measure $\lambda$ on $\mathbb{R}$, one has for the two functions (given for $1\geq \delta>0$): $M_0(\delta):=\delta^2\int_{[-1,1]\backslash (-\delta,\delta)}d\lambda(x)$ and $M_1(\delta):=\delta\int_{[-1,1]\backslash (-\delta,\delta)}\vert x\vert d\lambda(x)$ that $M_0(\delta)\rightarrow 0$ and $M_1(\delta)\rightarrow 0$ as $\delta\downarrow 0$. If, moreover, $\int_{[-1,1]}\vert x\vert d\lambda(x)<\infty$, then $\delta\int_{[-1,1]\backslash (-\delta,\delta)}d\lambda(x)\to 0$ as $\delta\downarrow 0$.  
\end{lemma}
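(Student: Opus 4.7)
The plan is to exploit the two standard integrability properties of the L\'evy measure: first, $\int_{[-1,1]} x^2 \, d\lambda(x) < \infty$, which is built into the definition of a L\'evy measure; and second, that $\lambda$ is finite on any set bounded away from the origin, so $\lambda([-1,1]\setminus(-\varepsilon,\varepsilon)) < \infty$ for every $\varepsilon \in (0,1]$. The proof is then a double $\varepsilon$-splitting argument for each of the three quantities.

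For $M_0(\delta)$ and $M_1(\delta)$, I would fix an arbitrary $\varepsilon \in (0,1]$ and split the domain of integration into $\{\delta \leq |x| < \varepsilon\}$ and $\{\varepsilon \leq |x| \leq 1\}$. On the inner piece, use the pointwise inequality $\delta \leq |x|$ to estimate
\[
\delta^2 \int_{\delta \leq |x| < \varepsilon} d\lambda(x) \leq \int_{\delta \leq |x| < \varepsilon} x^2 \, d\lambda(x) \leq \int_{(-\varepsilon,\varepsilon)} x^2 \, d\lambda(x),
\]
and similarly $\delta|x| \leq x^2$ gives the same bound for the integrand of $M_1$. Since $\int_{[-1,1]} x^2 \, d\lambda(x) < \infty$, dominated convergence makes the right-hand side arbitrarily small by choosing $\varepsilon$ small. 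On the outer piece, factor out the $\delta^2$ or $\delta$ and bound $\lambda([-1,1]\setminus(-\varepsilon,\varepsilon))$ by a finite constant (depending on $\varepsilon$), so this contribution vanishes as $\delta \downarrow 0$. Combining, $\limsup_{\delta \downarrow 0} M_i(\delta)$ is bounded by a quantity that tends to $0$ as $\varepsilon \downarrow 0$ and hence equals zero.

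For the third statement, the additional hypothesis $\int_{[-1,1]} |x| \, d\lambda(x) < \infty$ is exactly what is needed. Using the same splitting at $\varepsilon$, on the inner piece use $\delta \leq |x|$ to estimate
\[
\delta \int_{\delta \leq |x| < \varepsilon} d\lambda(x) \leq \int_{(-\varepsilon,\varepsilon)} |x| \, d\lambda(x),
\]
which can be made arbitrarily small by choice of $\varepsilon$. On the outer piece, $\delta \cdot \lambda([-1,1] \setminus(-\varepsilon,\varepsilon)) \to 0$ as $\delta \downarrow 0$ for fixed $\varepsilon$.

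The argument is essentially mechanical, with no real obstacle: the only subtle point is recognizing that in each case the scale factor $\delta$ or $\delta^2$ can be absorbed against the variable $x$ on the near-origin part to match the available integrability of $\lambda$, while the far-from-origin part is handled by finiteness of $\lambda$ there.
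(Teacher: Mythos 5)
Your argument is correct and rests on the same key observation as the paper's proof, namely that $\delta\leq\vert x\vert$ on the integration domain lets you trade the scale factors $\delta$, $\delta^2$ for powers of $\vert x\vert$ and thus invoke the finiteness of $\int_{[-1,1]}x^2\,d\lambda$ (resp.\ of $\int_{[-1,1]}\vert x\vert\,d\lambda$ for the last claim). The paper merely packages this as a single application of dominated convergence against the finite measure $x^2\,d\lambda$ (with dominating function $1$), whereas you unpack the same estimate as an explicit $\varepsilon$-splitting; the two are interchangeable.
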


\begin{proof}
Indeed let $\mu$ be the finite measure on $([-1,1],\mathcal{B}_{[-1,1]})$ given by $\mu(A):=\int_Ax^2d\lambda(x)$ ($A$ Borel subset of $[-1,1]$) and let
$f^0_\delta(x):=\left(\frac{\delta}{x}\right)^2\mathbbm{1}_{[-1,1]\backslash (-\delta,\delta)}(x)$ and $f^1_\delta(x):=\frac{\delta}{\vert x\vert}\mathbbm{1}_{[-1,1]\backslash (-\delta,\delta)}(x)$ be functions on $[-1,1]$. Clearly $0\leq f^0_\delta,f^1_\delta\leq 1$ and $f^0_\delta,f_\delta^1\rightarrow 0$ pointwise as $\delta\downarrow 0$. Hence by Lebesgue dominated convergence theorem (DCT), we have $M_0(\delta)=\int
f^0_\delta d\mu$ and $M_1(\delta)=\int f^1(\delta)d\mu$ converging to $\int 0d\mu=0$ as $\delta\downarrow 0$. The ``finite first absolute moment'' case is similar.
\end{proof}

\begin{proposition}\label{proposition:conditionQmatrix}
Under scheme 1, with $\sigma^2>0$, \eqref{eq:conditionsongenerator} holds for all sufficiently small $h$. Notation-wise, under either of the two schemes, we let $h_\star\in (0,+\infty]$ be such that $Q^h$ has non-negative off-diagonal entries for all $h\in (0,h_\star)$. 
\end{proposition}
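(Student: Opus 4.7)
The plan is to clear denominators. Multiplying each inequality in \eqref{eq:conditionsongenerator} through by $2h^2$ transforms them into
\begin{equation*}
\sigma^2 + c_0^h \pm h(\mu - \mu^h) + 2h^2\, c_{\pm h}^h \;\geq\; 0.
\end{equation*}
Since $c_0^h \geq 0$ and $c_{\pm h}^h = \lambda(A_{\pm h}^h) \geq 0$, the only possibly negative contribution is $\pm h(\mu - \mu^h)$, so the task reduces to showing $h|\mu - \mu^h| < \sigma^2$ for all sufficiently small $h$. As $\sigma^2 > 0$ is a strict inequality, it is enough to prove that $h(\mu - \mu^h) \to 0$ as $h \downarrow 0$.

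Since $h\mu \to 0$ trivially, only $h\mu^h$ needs attention. When $V = 0$ (i.e.\ $\lambda(\mathbb{R}) < \infty$), the indicator $\ind$ vanishes off the origin and hence $\mu^h = 0$; there is nothing to do. When $V = 1$, I would invoke the pointwise bound $|s| \leq |y| + h/2$, which holds for every $s \in \Zh\setminus\{0\}$ and every $y \in A_s^h$ because $A_s^h$ is contained in the interval of radius $h/2$ about $s$. Using that $\{A_s^h\}_{s \in \Zh\setminus\{0\}}$ partitions $\mathbb{R}\setminus[-h/2, h/2]$, summation gives
\begin{equation*}
|\mu^h| \;\leq\; \sum_{s \in \Zh\setminus\{0\}} |s|\,\lambda(A_s^h \cap [-1,1]) \;\leq\; \int_{[-1,1]\setminus[-h/2,h/2]}\!\bigl(|y| + \tfrac{h}{2}\bigr)\, d\lambda(y),
\end{equation*}
so that multiplication by $h$ yields $h|\mu^h| \leq 2\zeta(h/2) + 2\gamma(h/2)$, both of which are $o(1)$ by (the proof of) Lemma~\ref{lemma:somethingsvnaish}.

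No serious obstacle is anticipated. The only step worth isolating is the observation that each diameter-$h$ cell $A_s^h$ permits one to trade the factor $|s|$ inside the sum defining $\mu^h$ for the integrand $|y| + h/2$; this feeds the estimate directly into the small-jump moment quantities $\zeta(h/2)$ and $\gamma(h/2)$, which have already been shown to vanish. The strict positivity $\sigma^2 > 0$ then supplies exactly the slack needed to absorb the (vanishing) drift discrepancy $h(\mu - \mu^h)$, giving \eqref{eq:conditionsongenerator} for all sufficiently small $h$.
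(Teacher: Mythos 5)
Your argument is correct and takes essentially the same route as the paper's own proof: the same reduction to showing $h(\mu-\mu^h)\to 0$ (with the $V=0$ case immediate), the same triangle-inequality bound $\vert s\vert\leq\vert y\vert+h/2$ on each cell $A_s^h$, and the same appeal to Lemma~\ref{lemma:somethingsvnaish} through the quantities $\zeta(h/2)$ and $\gamma(h/2)$. Nothing further is needed.
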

\begin{proof}
If $V=0$ this is immediate. If $V=1$, then (via a triangle inequality):
\begin{eqnarray*}
h\vert \mu^h \vert&\leq&h\left\vert\sum_{s\in \Zh\backslash \{0\}}s\int_{A_s^h}\indd(y)d\lambda(y)\right\vert\leq h\sum_{s\in \Zh\backslash \{0\}}\int_{A_s^h}\vert s-u+u\vert\indd(y)d\lambda(y)\\
&\leq&h\left(\frac{h}{2}\lambda([-1,1]\backslash [-h/2,h/2])+\int_{[-1,1]\backslash [h/2,h/2]}\vert u\vert d\lambda(u)\right)\to 0
\end{eqnarray*}
as $h\downarrow 0$ by Lemma~\ref{lemma:somethingsvnaish}. Eventually the expression is smaller than $\sigma^2>0$ and  the claim follows. 
\end{proof}

Furthermore, we have the following inequalities, which together imply an estimate for $\vert \Psi^h-\Psi\vert$. In the following, recall the notation ($\delta\in (0, 1]$): $\zeta(\delta) := \delta\int_{[-1,1]\backslash [-\delta,\delta]} |x|d\lambda(x)$, $\gamma(\delta) := \delta^2\int_{[-1,1]\backslash [-\delta,\delta]} d\lambda(x)$, $c:=\lambda(\mathbb{R})$, $b := \kappa(0)$,  $d:=\lambda(\mathbb{R}\backslash [-1,1])$. Recall also the definition of the sets $A_s^h$ following \eqref{equation:infinitesimalgeneratorforY}.

\begin{proposition}[Convergence of characteristic exponents]\label{proposition:someestimates}
For all $p\in\mathbb{R}$: $0\leq f_h(p)\leq p^4h^2/4!$ and $0\leq i\sgn(p)g_h(p)\leq h^2\vert p\vert^3/3!$ (resp., under scheme 2, $\vert g_h(p)\vert\leq hp^2/2!$). Moreover:
\begin{enumerate}[(i)]
\item when $c<\infty$; with $V=0$: $\vert l_h(p)\vert\leq c\vert p\vert h/2$.
\item when $b<\infty=c$; with $V=1$; for all $h\leq 2$: $\vert l_h(p)\vert\leq \frac{h}{2}\left(\vert p\vert d+p^2b\right)+(p^2+\vert p\vert^3+p^4)o(h)$ (resp. under scheme 2, $\vert l_h(p)\vert\leq \frac{h}{2}\left(\vert p\vert d+2p^2b\right)+(p^2+\vert p\vert^3+p^4)o(h)$) where $o(h)$ depends only on $\lambda$.

\item when $b=\infty$; with $V=1$; for all $h\leq 2$: $\vert l_h(p)\vert\leq p^2\left(\zeta(h/2)+\frac{1}{2}\gamma(h/2)\right)+(\vert p\vert +\vert p\vert^3+p^4)O(h)$ (resp. under scheme 2, $\vert l_h(p)\vert\leq p^2\left[2\zeta(h/2)+\frac{1}{2}\gamma(h/2)\right]+(\vert p\vert +p^2 +\vert p\vert^3+p^4)O(h)$) where again $O(h)$ depends only on $\lambda$. Note here that we always have $\gamma\leq \zeta$ and that $\zeta$ decays strictly slower than $h$, as $h\downarrow 0$.
\end{enumerate}
\end{proposition}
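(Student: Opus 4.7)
The bounds on $f_h$ and $g_h$ are direct applications of Lemma~\ref{lemma:someestimates} with $x=hp$. For $f_h$, the inequality $0\le\cos(x)-1+x^2/2\le x^4/4!$ gives $0\le f_h(p)\le p^4h^2/4!$ upon dividing by $h^2$. For $g_h$ under scheme~1, $0\le\sgn(x)(x-\sin(x))\le\sgn(x)x^3/3!$ with $x=hp$, divided by $h$, yields $0\le i\sgn(p)g_h(p)\le h^2|p|^3/3!$. Under scheme~2, write $g_h(p)=[(e^{ihp}-1)/h-ip]\mathbbm{1}_{(0,\infty)}(\mu-\mu^h)+[(1-e^{-ihp})/h-ip]\mathbbm{1}_{(-\infty,0]}(\mu-\mu^h)$, each bracket being $(e^{\pm ihp}-1\mp ihp)/h$, so $|g_h(p)|\le h p^2/2$ from the elementary $|e^{ix}-1-ix|\le x^2/2$.

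\textbf{Strategy for $l_h$.} The key idea is to use the partition $\mathbb{R}=\bigsqcup_{s\in\Zh}A_s^h$ to write $\int_\mathbb{R}(\cdots)d\lambda=\sum_s\int_{A_s^h}(\cdots)d\lambda$, and to absorb each of the compound-Poisson-like terms $c_s^h(e^{isp}-1)=\lambda(A_s^h)(e^{isp}-1)=\int_{A_s^h}(e^{isp}-1)d\lambda(u)$ into the corresponding slice. The drift discretization $-i\mu^h\sin(hp)/h$ (scheme~1) or its one-sided analogue (scheme~2) combines with the compensator $+ip\int u\,\Ind(u)\,d\lambda$ because $\mu^h=\sum_{s\ne 0}s\int_{A_s^h}\Ind d\lambda$. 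Similarly, the diffusion-like piece $c_0^h(\cos(hp)-1)/h^2=\int_{A_0^h}y^2\Ind(y)(\cos(hp)-1)/h^2\,d\lambda(y)$ lives on $A_0^h$. After this bookkeeping, $l_h(p)=\sum_{s\in\Zh}\int_{A_s^h}E_s(u,p)\,d\lambda(u)$, where $E_s(u,p)$ is a pointwise ``discretization error'' measuring how well the mass of $\lambda$ on $A_s^h$ is represented by a single atom at $s$ (with the compensator appropriately adjusted). On each $A_s^h$ we have $|u-s|\le h/2$, so $E_s(u,p)$ is bounded via the Lipschitz estimates in Lemma~\ref{lemma:someestimates} with $\delta=(u-s)p$ of modulus at most $|p|h/2$.

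\textbf{Case analysis.} Case~(i) ($c<\infty$, $V=0$): $\Ind\equiv 0$ $\lambda$-a.e., so $c_0^h=\mu^h=0$ and $E_s(u,p)=(e^{isp}-1)-(e^{ipu}-1)$ (with $s=0$ when $u\in A_0^h$). Lemma~\ref{lemma:someestimates}\eqref{lemma:someestimates:i} gives $|E_s|\le|p|h/2$ and summing over all $s$ yields $|l_h(p)|\le(|p|h/2)\lambda(\mathbb{R})=c|p|h/2$. Case~(ii) ($c=\infty$, $b<\infty$, $V=1$): split the sum into ``large jumps'' ($A_s^h\cap[-1,1]=\emptyset$, contributing total $\lambda$-mass at most $d$) and ``small/mid jumps'' ($A_s^h\subset[-1,1]$, up to boundary slices). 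For large jumps $\Ind$ vanishes, Lemma~\ref{lemma:someestimates}\eqref{lemma:someestimates:i} again gives $|p|h/2$ per unit mass, contributing $|p|hd/2$. For small jumps use Lemma~\ref{lemma:someestimates}\eqref{lemma:someestimates:ii} with $x=sp$, $\delta=(u-s)p$: the pointwise bound factors as $(\delta^2/2+|\delta||x|)^2$, hence $|E_s|\le p^2h^2/8+p^2h|s|/2$, and integrating $|s|$ against $\lambda$ (via $|s|\le|u|+h/2$ on $A_s^h$) yields the $\frac12 p^2hb$ contribution. The boundary-slice corrections and the $A_0^h$-piece $\int_{A_0^h}[y^2(\cos(hp)-1)/h^2-(e^{ipy}-1-ipy)]d\lambda(y)$ reduce, via the same trigonometric identities, to the $(p^2+|p|^3+p^4)o(h)$ remainder, where $o(h)$ is controlled by Lemma~\ref{lemma:somethingsvnaish}. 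Scheme~2 is handled identically but with the one-sided drift discretization adding an extra factor of $2$ to the $p^2b$ coefficient (from $|g_h|\le hp^2/2$ versus $|g_h|\le h^2|p|^3/6$).

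\textbf{Case (iii) and main obstacle.} Here $b=\infty$, so the $p^2hb$ bookkeeping of case~(ii) breaks down and one must refine $|s|\lambda(A_s^h)\le\int_{A_s^h}(|u|+h/2)d\lambda(u)$ and restrict attention to $A_s^h\not\subset[-h/2,h/2]$: the sum of $|u|d\lambda$ over such slices (with $|s|\le 1$) is exactly $\int_{[-1,1]\setminus[-h/2,h/2]}|u|d\lambda(u)=\zeta(h/2)/(h/2)$, producing the leading $p^2\zeta(h/2)$ term, while the $p^2(h/2)^2\lambda(A_s^h)$ term sums to $\tfrac12 p^2\gamma(h/2)$. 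The $A_0^h$-contribution, where the compensator is integrated against a measure with possibly infinite first moment, must be treated with the sharper Lemma~\ref{lemma:someestimates}\eqref{lemma:someestimates:i}--\eqref{lemma:someestimates:ii} applied directly (with $s=0$) plus the $f_h$-error for the $y^2$ factor, and yields $O(h)$ remainders carrying the $(|p|+|p|^3+p^4)$ prefactor. The main obstacle is the careful accounting in this case: cleanly separating the leading $p^2(\zeta+\tfrac12\gamma)(h/2)$ from the lower-order $(|p|+|p|^3+p^4)O(h)$ remainder, and verifying that the $o(h)/O(h)$ constants depend only on $\lambda$ (which follows because every residual integral is of the form covered by Lemma~\ref{lemma:somethingsvnaish}). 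The claim $\gamma\le\zeta$ is immediate from $|x|\le 1$ on $[-1,1]$, and $\zeta(h)/h=\int_{[-1,1]\setminus[-h,h]}|x|d\lambda\uparrow\infty$ as $h\downarrow 0$ precisely when $b=\infty$, showing $\zeta$ decays strictly slower than $h$.
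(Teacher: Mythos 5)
Your strategy is essentially the paper's: absorb every term of $\Psi^h-\Psi$ into the partition $\{A_s^h\}_{s\in\Zh}$ and control the slice-wise error with the Lipschitz estimates of Lemma~\ref{lemma:someestimates}, splitting according to whether $u$ lies outside $[-1,1]$ (no compensator, part \eqref{lemma:someestimates:i}), in $[-1,1]$ away from $A_0^h$ (parts \eqref{lemma:someestimates:ii}--\eqref{lemma:someestimates:iii}), or in $A_0^h$; and your observation that the right-hand side of \eqref{lemma:someestimates:ii} factors as $(\delta^2/2+\vert\delta x\vert)^2$ is a clean improvement on the paper's $\sqrt{1+z}\leq 1+z/2$ step. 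Two points, however, need repair. First, for the $A_0^h$ contribution $c_0^h\frac{\cos(hp)-1}{h^2}-\int_{A_0^h}(e^{ipu}-1-ipu)\,d\lambda(u)$ in cases (ii)--(iii), applying parts \eqref{lemma:someestimates:i}--\eqref{lemma:someestimates:ii} ``directly with $s=0$'' only yields a bound of order $p^2\xi(h/2)$, where $\xi(\delta):=\int_{[-\delta,\delta]}u^2d\lambda$; this is $p^2\,o(1)$ but in general neither $p^2\,o(h)$ nor dominated by $\zeta$ or $\gamma$ (which integrate over the \emph{complementary} region), so it is not absorbed by the stated remainders. You must carry out the cancellation you only hint at: write $\frac{\cos(hp)-1}{h^2}=-\frac{p^2}{2}+f_h(p)$ and pair $-\frac{p^2}{2}c_0^h$ with the second-order Taylor term of the integral, which forces a third-order estimate --- part \eqref{lemma:someestimates:iii} with $x=0$, or directly $\vert\cos z-1+z^2/2\vert\leq z^4/4!$ and $\vert\sin z - z\vert\leq \vert z\vert^3/3!$ --- exactly as the paper does; parts \eqref{lemma:someestimates:i}--\eqref{lemma:someestimates:ii} alone do not suffice. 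Second, in case (iii) your bookkeeping takes $x=ps$ and then $\vert s\vert\leq\vert u\vert+h/2$, which produces $p^2\bigl(\zeta(h/2)+\tfrac{3}{2}\gamma(h/2)\bigr)$ rather than the stated $p^2\bigl(\zeta(h/2)+\tfrac{1}{2}\gamma(h/2)\bigr)$; to recover the stated constant one should put the continuous point in the $x$-slot ($x=pu$, $y=ps$), so that $\vert\delta\vert\vert x\vert\leq\frac{\vert p\vert h}{2}\vert p\vert\vert u\vert$ integrates directly to $p^2\zeta(h/2)$ with no correction. Since $\gamma\leq\zeta$ (because $\vert x\vert\geq\delta$ on the domain of integration --- not because $\vert x\vert\leq 1$, as you write), this affects only constants and not rates, but those constants are reused verbatim in Proposition~\ref{propositon:convergence_kernels_diffusion_positive}.
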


\begin{remark}\label{remark:characteristic_exponents}
\begin{enumerate}[(i)]
\item \label{remark:characteristic_exponents:i} We may briefly summarize the essential findings of Proposition~\ref{proposition:someestimates} in Table~\ref{table:characteristic_exponent_convergence}, by noting that the following will have been proved for $p\in \mathbb{R}$ and $h\in (0,h_\star\land 2)$: 
\begin{equation}\label{estimate}
\vert \Psi^h(p)-\Psi(p)\vert\leq f(h)R(\vert p\vert)+o(f(h))Q(\vert p\vert)
\end{equation}
where $R$ and $Q$ are polynomials of respective degrees $\alpha$ and $\beta$ and $f:(0,h_\star\land 2)\to (0,\infty)$.
\begin{table}[!hbt]
\caption{Summary of Proposition~\ref{proposition:someestimates} via the triplet $(f(h),\alpha,\beta)$ introduced in \ref{remark:characteristic_exponents:i} of Remark~\ref{remark:characteristic_exponents}. We agree $\deg 0=-\infty$, where $0$ is the zero polynomial.}\label{table:characteristic_exponent_convergence}
\begin{center}

\begin{tabular}{|c|c|c|}\hline
$(f(h),\alpha,\beta)$ & $\sigma^2>0$ (scheme 1) & $\sigma^2=0$ (scheme 2)\\\hline
$\lambda(\mathbb{R})=0$ ($V=0$) & $(h^2,4,-\infty)$ & $(h,2,-\infty)$\\\hline
$\lambda(\mathbb{R})<\infty$ ($V=0$) & $(h,1,4)$ & $(h,2,-\infty)$\\\hline
$\kappa(0)<\infty=\lambda(\mathbb{R})$  ($V=1$)& \multicolumn{2}{|c|}{$(h,2,4)$}\\\hline
$\kappa(0)=\infty$ ($V=1$) & \multicolumn{2}{|c|}{$(\zeta(h/2),2,4)$}\\\hline
\end{tabular}

\end{center}
\end{table}

\item \label{remark:characteristic_exponents:ii} An analogue of \eqref{estimate} is got in the multivariate case, simply by looking directly at the difference of \eqref{eq:multivariate:exponent} and \eqref{eq:characteristic_exponent}. One does so either component by component (when it comes to the drift and diffusion terms), the estimates being then the same as in the univariate case; or else one employs, in addition, Lemma~\ref{lemma:multivariate} (for the part corresponding to the integral against the L\'evy measure). In  particular, \eqref{estimate} (with $p\in \mathbb{R}^d$) follows for suitable choices of $R$, $Q$ and $f$, and Table~\ref{table:characteristic_exponent_convergence} remains unaffected, apart from its last entry, wherein $\zeta$ should be replaced by $\zeta+\chi$ (one must also replace ``$\diffusion=0$'' (resp. ``$\diffusion>0$'') by ``$\Diffusion$ (resp. non-) degenerate''  (amalgamating scheme 1 \& 2 into the multivariate one) and $\measure(\mathbb{R})$ by $\Measure(\mathbb{R}^d)$). 

\item The above entails, in particular, convergence of $\Psi^h(p)$ to $\Psi(p)$ as $h\downarrow 0$ pointwise in $p\in\mathbb{R}$. L\'evy's continuity theorem \cite[p. 326]{dudley} and stationarity and independence of increments yield at once Remark~\ref{theorem:convergenceindistribution}. 
\item Note that we use $V=1$ rather than $V=0$ when $b<\infty=c$, because this choice yields linear convergence (locally uniformly) of $\Psi^h\to \Psi$. By contrast, retaining $V=0$, would have meant that the decay of $\Psi^h-\Psi$ would be governed, modulo terms which are $O(h)$, by the quantity $Q(h):=\sum_{s\in\Zh\backslash \{0\}}\int_{A_s^h\cap[-1,1]}(s-u)d\measure(u)$ (as will become clear from the estimates in the proof of Proposition~\ref{proposition:someestimates} below). But the latter can decay slower than $h$. In particular, consider the family of L\'evy measures, indexed by $\epsilon\in [0,1)$: $\measure_\epsilon=\sum_{n=1}^\infty w_n\delta_{-x_n}$, with $h_n=1/3^n$, $x_n=3h_n/2$, $w_n=1/x_n^\epsilon$, $n\geq 1$. For all these measures $b<\infty=c$. Furthermore, it is straightforward to verify that $\liminf_{n\to\infty}Q(h_n)/K(h_n)>0$, where $K(h)$ is $h^{1-\epsilon}$ or $h\log(1/h)$, according as $\epsilon\in (0,1)$ or $\epsilon=0$. 
\item \label{remark:characteristic_exponents:iv} It is seen from Table~\ref{table:characteristic_exponent_convergence} that the order of convergence goes from quadratic (at least when $\diffusion>0$) to  linear, to sublinear, according as the L\'evy measure is zero, $\measure(\mathbb{R})>0$ \& $\kappa(0)<\infty$, or $\kappa$ becomes more and more singular at the origin. Let us attempt to offer some intuition in this respect. First, the quadratic order of convergence is due to the convergence properties of the discrete second and symmetric first derivative. Further, as soon as the L\'evy measure is non-zero, the latter is aggregated over the intervals $(A_s^h)_{s\in\Zh\backslash \{0\}}$, length $h$, which (at least in the worst case scenario) commit respective errors of order $\measure(A_s^h)h$ or $\int_{A_s^h}(\vert x\vert\land 1)d\measure(x)h$ ($s \in \Zh\backslash \{0\}$) each, according as $V=0$ or $V=1$. Hence, the more singular the $\kappa$, the bigger the overall error. Figure~\ref{fig:comparison} depicts this progressive worsening of the convergence rate for the case of $\alpha$-stable L\'evy processes. 
\end{enumerate}
\end{remark}

\begin{figure}[!htb]
\includegraphics[width=\textwidth]{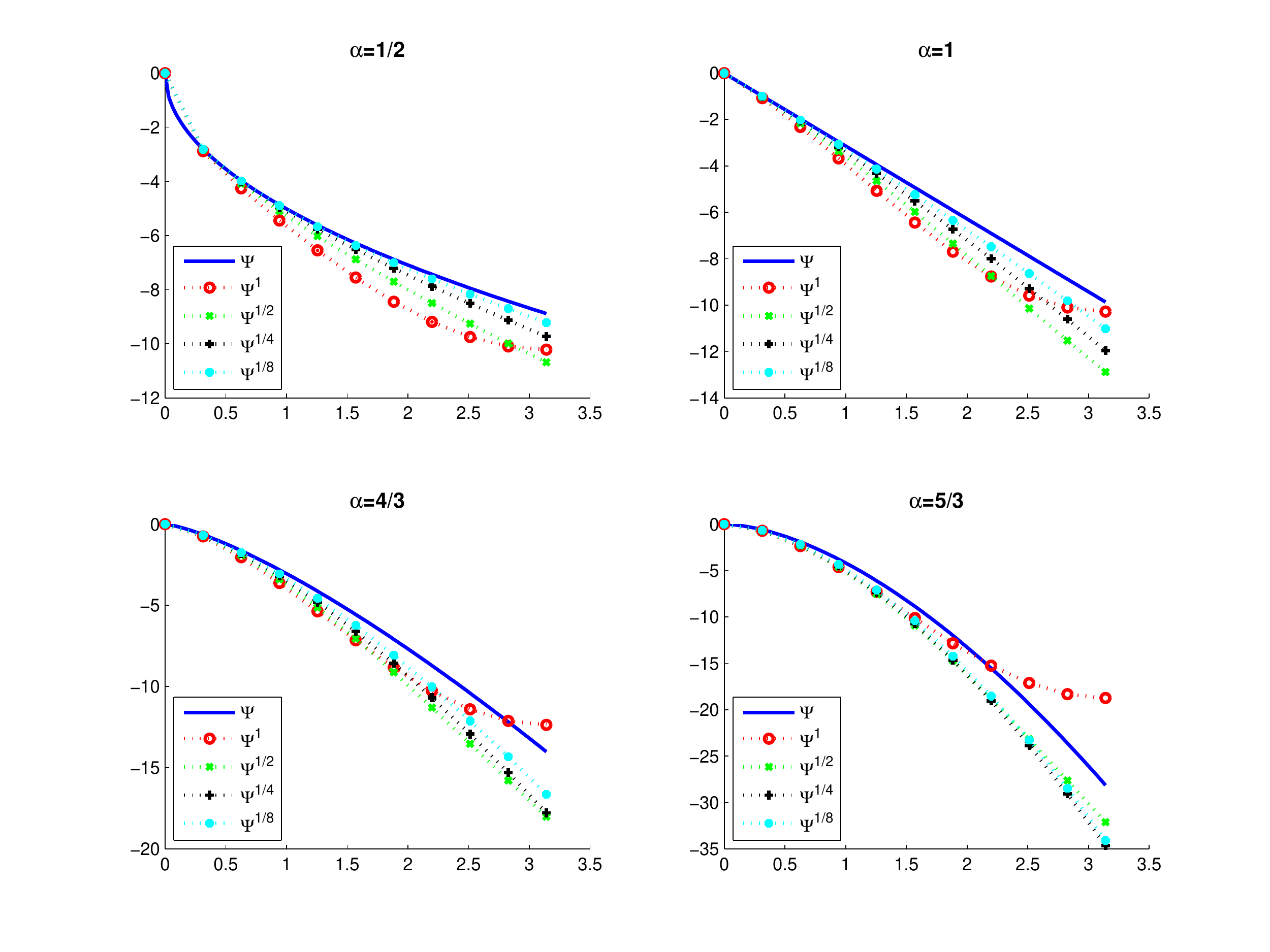}
\caption{Comparison of the convergence of characteristic exponents for $\alpha$-stable L\'evy processes, $\alpha\in \{1/2,1,4/3,5/3\}$; $\diffusion=0$, $\mu=0$ and $\measure(dx)=dx/\vert x\vert^{1+\alpha}$ (scheme 2, $V=1$). Each plot is of $\Psi$ and of $\Psi^h$ ($h\in \{1,1/2,1/4,1/8\}$) on the interval $[0,\pi]$. Note that $\kappa(0)=\infty$, precisely when $\alpha\geq 1$. The plots are indeed suggestive of a progressive worsening of the rate of convergence as $\alpha\uparrow$.}
\label{fig:comparison}
\end{figure}

\begin{proof} (Of Proposition~\ref{proposition:someestimates}.) The first two assertions are transparent by Lemma~\ref{lemma:someestimates} --- with the exception of the estimate under scheme 2, where (with $\delta:=hp$):
\begin{equation*}
\vert g_h(p)\vert=\frac{1}{h}\sqrt{\delta^2-2\delta\sin(\delta)+2(1-\cos(\delta))}\leq\frac{1}{h}\frac{\delta^2}{2}=hp^2/2!.
\end{equation*}

Further, if $c<\infty$ (under $V=0$):
\begin{eqnarray*}
\nonumber
&&\left \vert \sum_{s\in \Zh\backslash \{0\}}c_s^h(e^{isp}-1)-\int_{\mathbb{R}\backslash [-\frac{h}{2},\frac{h}{2}]}(e^{ipu}-1)d\lambda(u)\right\vert =\left\vert \sum_{s\in \Zh\backslash \{0\}}c_s^he^{isp}-\int_{\mathbb{R}\backslash [-\frac{h}{2},-\frac{h}{2}]}e^{ipu}d\lambda(u)\right\vert=\\\nonumber 
&=&\!\!\left\vert\sum_{s\in \Zh\backslash \{0\}}\int_{A_s^h}\left(e^{isp}-e^{ipu}\right)d\lambda(u)\right\vert\leq \sum_{s\in \Zh\backslash\{0\}}\int_{A_s^h}\left\vert 1-e^{ip(u-s)}\right\vert d\lambda(u)\leq \vert p\vert h\lambda\left(\mathbb{R}\backslash \left[-\frac{h}{2},\frac{h}{2}\right]\right)/2,
\end{eqnarray*}
where in the second inequality we apply \eqref{lemma:someestimates:i} of Lemma~\ref{lemma:someestimates} and the first follows from the triangle inequalities. Finally, $\vert \int_{[-h/2,h/2]}(e^{ipu}-1)d\lambda(u)\vert\leq \lambda([-h/2,h/2])\vert p\vert h/2$, again by \eqref{lemma:someestimates:i} of Lemma~\ref{lemma:someestimates}, and the claim follows. 

For the remaining two claims, in addition to recalling the general results of Lemma~\ref{lemma:someestimates}, we prepare the following specific estimates. Whenever $\{x,y\}\subset\mathbb{R}$, with $\delta:=y-x$, $0\ne \vert x\vert\geq \vert\delta\vert$, we have: 
\begin{itemize}
\item using the inequality $\sqrt{1+z}\leq 1+z/2$ ($z\geq 0$) and \eqref{lemma:someestimates:ii} of Lemma~\ref{lemma:someestimates}: 
\begin{equation}\label{eq:adiuvat}
\vert e^{ix}-ix-e^{iy}+iy\vert\leq\vert \delta x\vert\left(1+\frac{1}{2}\left\vert\frac{\delta}{x}\right\vert+\frac{1}{8}\frac{\delta^2}{x^2}\right)=\vert\delta x\vert+\frac{1}{2}\delta^2+\frac{1}{8}\left\vert\frac{\delta^3}{x}\right\vert\leq\vert \delta x\vert+\frac{5}{8}\delta^2.
\end{equation} 
\item using \eqref{lemma:someestimates:iii} of Lemma~\ref{lemma:someestimates}:
\begin{equation}\label{eq:adiuvatend}
\vert e^{ix}-ix -e^{iy}+iy\vert\leq \vert e^{ix}-e^{iy}-ix+iy+x^2/2-y^2/2\vert+\frac{1}{2}\vert x^2-y^2\vert\leq \frac{7}{6}\vert \delta\vert x^2+\vert \delta\vert\vert x\vert+\frac{1}{2}\delta^2.
\end{equation}
\end{itemize}

Now, when $c=\infty$ (under $V=1$; for all $h\leq 2$), denoting $\xi(\delta):=\int_{[-\delta,\delta]}x^2d\lambda(x)$, we have, under scheme 1, as follows:
\begin{equation}\label{eq:end:one}
\left\vert c^h_0\left(\frac{\cos(hp)-1}{h^2}+\frac{p^2}{2}\right)\right\vert\leq p^4h^2\xi(h/2)/4!.
\end{equation}

\begin{eqnarray}
\nonumber
&&\left\vert \int_{[-\frac{h}{2},\frac{h}{2}]} u^2\left(-\frac{p^2}{2}\right)d\lambda(u)-\int_{[-\frac{h}{2},\frac{h}{2}]}\left(e^{ipu}-1-ipu\right)d\lambda(u)\right\vert\\\nonumber
&\leq& \left\vert  \int_{[-\frac{h}{2},\frac{h}{2}]}\left(\cos(pu)-1+\frac{p^2u^2}{2!}\right)d\lambda(u)\right\vert+\left\vert \int_{[-\frac{h}{2},\frac{h}{2}]}\left(\sin(pu)-pu\right)d\lambda(u)\right\vert\\
&\leq&p^4(h/2)^2\xi(h/2)/4!+\vert p\vert^3(h/2)\xi(h/2)/3!.
\label{eq:end:two}
\end{eqnarray}

\begin{equation}\label{eq:end:three}
\vert-\mu^hg_h(p)\vert=\left\vert-i\mu^h\left(\frac{\sin(hp)}{h}-p\right)\right\vert\leq\frac{1}{3!}h^2\vert p\vert^3\left(\zeta(h/2)+\kappa(h/2)\right).
\end{equation}

\begin{eqnarray}
\nonumber
&&\left \vert \sum_{s\in \Zh\backslash \{0\}}c_s^h(e^{isp}-1)-ip\mu^h-\int_{\mathbb{R}\backslash [-\frac{h}{2},\frac{h}{2}]}(e^{ipu}-1-ipu\indd(u))d\lambda(u)\right\vert\\\nonumber
&\leq&\!\!\sum_{s\in \Zh\backslash \{0\}}\int_{A_s^h}\left\vert e^{ipu}-e^{ips}-ipu\indd(u)+ips\indd(u)\right\vert d\lambda(u)\\\nonumber
&\leq& \!\!\sum_{s\in \Zh\backslash \{0\}}\left[\int_{A_s^h\cap (\mathbb{R}\backslash [-1,1])}+\int_{A_s^h\cap [-1,1]}\right]\left\vert e^{ipu}-e^{ips}-ipu\indd(u)+ips\indd(u)\right\vert d\lambda(u)\\
&\leq&\!\! \frac{h}{2}\vert p\vert \int_{\mathbb{R}\backslash [-1,1]}d\lambda(u)+p^2\frac{h}{2}\int_{[-1,1]\backslash [-\frac{h}{2},\frac{h}{2}]}\!\!\vert u\vert d\lambda(u)+ p^2\frac{5}{8}\left(\frac{h}{2}\right)^2\lambda([-1,1]\backslash [-h/2,h/2]),
\label{eq:end:four}
\end{eqnarray}
where, in particular, we have applied \eqref{eq:adiuvat} to $x=ps$, $y=pu$. If in addition $b=\infty$, we opt rather to use \eqref{eq:adiuvatend}, again with $x=ps$ and $y=pu$, and obtain instead: 
\begin{eqnarray}
\nonumber
&&\left \vert \sum_{s\in \Zh\backslash \{0\}}c_s^h(e^{isp}-1)-ip\mu^h-\int_{\mathbb{R}\backslash [-\frac{h}{2},\frac{h}{2}]}(e^{ipu}-1-ipu\indd(u))d\lambda(u)\right\vert\\\nonumber
&\leq& \frac{h}{2}\vert p\vert \int_{\mathbb{R}\backslash [-1,1]}d\lambda(u)+p^2\frac{h}{2}\int_{[-1,1]\backslash [-\frac{h}{2},\frac{h}{2}]}\vert u\vert d\lambda(u)+ p^2\frac{1}{2}\left(\frac{h}{2}\right)^2\lambda([-1,1]\backslash [-h/2,h/2])+\\
&+&\frac{7}{6}\vert p\vert^3\frac{h}{2}\int_{[-1,1]}x^2d\measure(x).
\label{eq:end:four:prime}
\end{eqnarray}
Under scheme 2, \eqref{eq:end:one}, \eqref{eq:end:two} and \eqref{eq:end:four}/\eqref{eq:end:four:prime} remain unchanged, whereas \eqref{eq:end:three} reads: 
\begin{equation}\label{eq:end2:three} 
\left\vert\mu^hg_h(p)\right\vert\leq \frac{h}{2}p^2\left(\zeta(h/2)+\kappa(h/2)\right).
\end{equation}
Now, combining \eqref{eq:end:one}, \eqref{eq:end:two}, \eqref{eq:end:three} and \eqref{eq:end:four} under scheme 1 (resp. \eqref{eq:end:one}, \eqref{eq:end:two}, \eqref{eq:end2:three} and \eqref{eq:end:four}  under scheme 2), yields the desired inequalities when $b<\infty$. If $b=\infty$ use \eqref{eq:end:four:prime} in place of \eqref{eq:end:four}. 
\end{proof}

\section{Rates of convergence for transition kernels}\label{section:Rates}
Finally let us incorporate the estimates of Proposition~\ref{proposition:someestimates} into an estimate of $D^h_{t,T}(x,y)$ (recall the notation in \eqref{eq:fundamental_notation}). Assumption~\ref{assumption} and Table~\ref{table:reference} are understood as being in effect throughout this section from this point onwards. Recall that $\vert\Psi^h-\Psi\vert\leq \sigma^2\vert f_h \vert+\mu \vert g_h\vert+\vert l_h\vert$ and that the approximation is considered for $h\in (0,h_\star)$ (cf. Proposition~\ref{proposition:conditionQmatrix}).

First, the following observation, which is a consequence of the $h$-uniform growth of $-\Re\Psi^h(p)$ as $\vert p\vert\to\infty$, will be crucial to our endeavour (compare Remark~\ref{proposition:levydensity}).

\begin{proposition}\label{proposition:uniform_coercivity}
For some $\{P,C,\epsilon\}\subset (0,\infty)$ and $h_0\in (0,h_\star]$, depending only on $\{\lambda,\sigma^2\}$, and then all $h\in (0,h_0)$, $p\in [-\pi/h,\pi/h]\backslash (-P,P)$ and $t\geq 0$: $\vert \phi_{X_t}^h(p)\vert\leq \exp\{-Ct\vert p\vert^\epsilon\}$. Moreover, when $\sigma^2>0$, we may take $\epsilon=2$, $P=0$, $C=\frac{1}{2}\left(\frac{2}{\pi}\right)^2$ and $h_0=h_\star$, whereas otherwise $\epsilon$ may take the same value as in Orey's condition (cf.  Assumption~\ref{assumption}). 
\end{proposition}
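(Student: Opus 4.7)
The target $|\phi^h_{X_t}(p)|\le\exp(-Ct|p|^\epsilon)$ is equivalent, via $|\phi^h_{X_t}(p)|=\exp(t\Re\Psi^h(p))$, to the uniform coercive bound $-\Re\Psi^h(p)\ge C|p|^\epsilon$. Starting from \eqref{eq:spectralthingy2}/\eqref{eq:spectralthingy} I would decompose the real part as
\[
-\Re\Psi^h(p)=(\sigma^2+c_0^h)\frac{1-\cos(hp)}{h^2}+\sum_{s\in\Zh\setminus\{0\}}c_s^h(1-\cos(sp))+R_h(p),
\]
with $R_h\equiv 0$ under scheme 1 (the drift term is purely imaginary) and, after a one-line case-split on $\sgn(\mu-\mu^h)$ under scheme 2, $R_h(p)=|\mu-\mu^h|(1-\cos(hp))/h\ge 0$. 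All three summands are thereby non-negative, so any one of them in isolation yields a valid lower bound.

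When $\sigma^2>0$ I would retain only the first piece and invoke $1-\cos x\ge(2/\pi^2)x^2$ on $|x|\le\pi$, which is available because $p\in[-\pi/h,\pi/h]$ forces $|hp|\le\pi$. This gives at once $-\Re\Psi^h(p)\ge(2\sigma^2/\pi^2)p^2=\tfrac12(2/\pi)^2\sigma^2\,p^2$, supplying $\epsilon=2$, $P=0$ and $h_0=h_\star$ (the stated constant in the statement is this coefficient divided by $\sigma^2$).

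When $\sigma^2=0$ and Orey holds (so scheme 2 is in effect and, necessarily, $\lambda(\mathbb{R})=\infty$ with $V=1$), the coercivity has to come from $c_0^h$ together with the jump sum. My plan is to exploit the compound-Poisson structure of $X^h$: collecting the coefficients of $e^{\pm ihp}-1$ reveals that the L\'evy measure $\lambda^h$ of $X^h$ satisfies $\lambda^h(\{\pm h\})\ge c_0^h/(2h^2)+c_{\pm h}^h$ and $\lambda^h(\{kh\})\ge c_{kh}^h$ for $|k|\ge 2$, whence for every $r\ge h$
\[
\int_{|y|\le r}y^2\,d\lambda^h(y)\ge c_0^h+\sum_{0<|k|\le r/h}(kh)^2\,c_{kh}^h\ge\tfrac14\int_{|u|\le r-h/2}u^2\,d\lambda(u),
\]
using $(kh)^2\ge u^2/4$ on $A_{kh}^h$ for $|k|\ge 1$ (since $|u-kh|\le h/2\le |kh|/2$) together with $c_0^h=\int_{|u|\le h/2}u^2\,d\lambda$ (valid for $h\le 2$). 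Orey's lower bound $\int_{|u|\le r'}u^2\,d\lambda\ge c(r')^{2-\epsilon}$ for small $r'$ then delivers $\int_{|y|\le r}y^2\,d\lambda^h\ge C' r^{2-\epsilon}$ uniformly in small $h$: for $r\ge 2h$ one takes $r'=r-h/2\ge r/2$, while for $r\in[h,2h]$ the single term $c_0^h\ge c(h/2)^{2-\epsilon}$ is already of order $r^{2-\epsilon}$. Inserting $r=\pi/|p|$ into the identity
\[
-\Re\Psi^h(p)=\int(1-\cos(py))\,d\lambda^h(y)\ge\frac{2p^2}{\pi^2}\int_{|y|\le\pi/|p|}y^2\,d\lambda^h(y)
\]
(applicable because $|py|\le\pi$ on the integrand and $\pi/|p|\ge h$ whenever $p\in[-\pi/h,\pi/h]$) finally yields $-\Re\Psi^h(p)\ge C|p|^\epsilon$ on the entire fundamental domain for $|p|\ge P:=\pi/r_0$ and $h\le h_0$, with $\epsilon$ as in Orey.

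The main delicacy is the high-frequency endpoint $|p|\sim\pi/h$, where $r=\pi/|p|$ is comparable to the lattice spacing and a continuum-type truncation argument loses control of the central interval $[-h/2,h/2]$; the clean way out is the observation that Orey already forces $c_0^h$ alone to be of order $h^{2-\epsilon}\sim|p|^{-(2-\epsilon)}$, so the ``diffusion-like'' single term inside $\lambda^h(\{\pm h\})$ carries the whole coercivity in this regime. The multivariate constants advertised in Remark~\ref{remark:multiariate:condition_discussion} under $l=d$ follow by applying the $\sigma^2>0$ argument coordinate-wise to the non-degenerate $\Sigma$.
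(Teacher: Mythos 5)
Your proof is correct and follows essentially the same route as the paper's: for $\sigma^2>0$ both keep only the diffusion term and use $1-\cos x\ge (2/\pi^2)x^2$ on $[-\pi,\pi]$, and for $\sigma^2=0$ both discard the (non-negative) drift contribution, lower-bound the remaining terms by $\frac{2}{\pi^2}p^2$ times a truncated second moment of the discretised measure, compare that to $\int_{[-r,r]}u^2\,d\lambda$ over a slightly shrunk interval (your constant $1/4$ versus the paper's $4/9$), and invoke Orey. Your phrasing through the L\'evy measure $\lambda^h$ of the compound Poisson chain and your explicit treatment of the endpoint $\vert p\vert\sim\pi/h$ via the $c_0^h$ term correspond exactly to the paper's $\bigl(\frac{\pi}{\vert p\vert}-\frac{h}{2}\bigr)\lor\frac{h}{2}$ step.
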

\begin{proof}
Assume first $\diffusion>0$, so that we are working under scheme 1. It is then clear from \eqref{eq:spectralthingy2} that: $$-\Re\Psi^h(p)\geq \diffusion \frac{1-\cos(hp)}{h^2}\geq \frac{1}{2}\left(\frac{2}{\pi}\right)^2\sigma^2p^2,$$ since $1-\cos(x)=2\sin^2(x/2)\geq 2\left(\frac{x}{\pi}\right)^2$  for all $x\in [-\pi,\pi]$. 
On the other hand, if $\diffusion=0$, we work under scheme 2 and necessarily $V=1$. In that case it follows from \eqref{eq:spectralthingy} for $h\leq 2$ and $p\in [-\pi/h,\pi/h]\backslash \{0\}$, that: 
\begin{eqnarray*}
-\Re\Psi^h(p)&\geq& \left[c_0^h\frac{1-\cos(hp)}{h^2}+\sum_{s\in \Zh\backslash \{0\}}c_s^h(1-\cos(sp))\right]\\
&\geq&\frac{2}{\pi^2}p^2\left(\int_{A_0^h}u^2d\measure(u)+\sum_{s\in \Zh\backslash \{0\},\vert s\vert\leq  \frac{\pi}{\vert p\vert}}s^2c_s^h\right)\\
&\geq&\frac{2}{\pi^2}p^2\left(\int_{A_0^h}u^2d\measure(u)+\frac{4}{9}\sum_{s\in \Zh\backslash \{0\},\vert s\vert\leq  \frac{\pi}{\vert p\vert}}\int_{A_s^h}u^2d\measure(u)\right)\\
&\geq&\frac{2}{\pi^2}p^2\left(\int_{A_0^h}u^2d\measure(u)+\frac{4}{9}\int_{[-\left(\frac{\pi}{\vert p\vert}-\frac{h}{2}\right),\frac{\pi}{\vert p\vert}-\frac{h}{2}]\backslash A_0^h}u^2d\measure(u)\right)\\
&\geq& \frac{8}{9}\frac{1}{\pi^2}p^2 \int_{\left[-\left(\left(\frac{\pi}{\vert p\vert}-\frac{h}{2}\right)\lor \frac{h}{2}\right),\left(\left(\frac{\pi}{\vert p\vert}-\frac{h}{2}\right)\lor \frac{h}{2}\right)\right]}u^2d\measure(u)\\
&\geq&  \frac{8}{9}\frac{1}{\pi^2}p^2 \int_{[-\frac{1}{2}\frac{\pi}{\vert p\vert},\frac{1}{2}\frac{\pi}{\vert p\vert}]}u^2d\measure(u).
\end{eqnarray*}
Now invoke Assumption~\ref{assumption}. There are some $\{r_0,A_0\}\in (0,+\infty)$ such that for all $r\in (0,r_0]$: $\int_{[-r,r]}u^2d\measure(u)\geq A_0r^{2-\epsilon}$. Thus for $P=\pi/(2r_0)$ and then all $p\in \mathbb{R}\backslash (-P,P)$, we obtain: $$\int_{[-\frac{1}{2}\frac{\pi}{\vert p\vert},\frac{1}{2}\frac{\pi}{\vert p\vert}]}u^2d\measure(u)\geq A_0\left(\frac{1}{2}\frac{\pi}{\vert p\vert}\right)^{2-\epsilon},$$ from which the desired conclusion follows at once. Remark that, possibly, $r_0$ may be taken as $+\infty$, in which case $P$ may be taken as zero.
\end{proof}
Second, we have the following general observation which concerns the transfer of the rate of convergence from the characteristic exponents to the transition kernels. Its validity is in fact independent of Assumption~\ref{assumption}.

\begin{proposition}\label{proposition:transfer}
Suppose there are $\{P,C,\epsilon\}\subset (0,\infty)$, a real-valued polynomial $R$, an $h_0\in (0,h_\star]$, and a function $f:(0,h_0)\to (0,\infty)$, decaying to $0$ no faster than some power law, such that for all $h\in (0,h_0)$:
\begin{enumerate}
\item \label{transfer:one} for all $p\in [-\pi/h,\pi/h]$: $\vert \Psi^h(p)-\Psi(p)\vert\leq f(h)R(\vert p\vert)$.
\item\label{transfer:two} for all $s>0$ and $p\in [-\pi/h,\pi/h]\backslash (-P,P)$: $\vert \phi_{X_s^h}(p)\vert\leq \exp\{-Cs\vert p\vert^\epsilon\}$; whereas for $p\in \mathbb{R}\backslash (-P,P)$: $\vert \phi_{X_s}(p)\vert\leq \exp\{-Cs\vert p\vert^\epsilon\}$.
\end{enumerate}
Then for any $s>0$, $\Delta_s(h)=O(f(h))$. 
\end{proposition}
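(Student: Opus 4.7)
The plan is to apply the Fourier inversion representations \eqref{equation:trnasitionkernelcont} and \eqref{equation:trnasitionkerneldiscrete}, so that for $s=T-t>0$ and any $\{x,y\}\subset\Zh$,
\[
D^h_{t,T}(x,y)\leq \frac{1}{2\pi}\int_{-\pi/h}^{\pi/h}\!\bigl|e^{s\Psi^h(p)}-e^{s\Psi(p)}\bigr|\,dp+\frac{1}{2\pi}\int_{|p|>\pi/h}\!\bigl|e^{s\Psi(p)}\bigr|\,dp,
\]
since $|e^{ip(x-y)}|=1$. The first step is to dispose of the tail: by hypothesis~\ref{transfer:two}, $|e^{s\Psi(p)}|\leq \exp\{-Cs|p|^\epsilon\}$ for $|p|>P$, so once $\pi/h>P$ the tail integral is bounded by $2\int_{\pi/h}^\infty e^{-Csp^\epsilon}dp$, which decays super-polynomially in $h$. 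Since $f$ decays to $0$ no faster than a power law, this tail is $o(f(h))$, hence $O(f(h))$.

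Next I split the main integral at $P$. On the compact region $[-P,P]$, both $\Re\Psi$ and $\Re\Psi^h$ are nonpositive (the latter because $\Psi^h$ is the exponent of a compound Poisson law), so the elementary complex inequality
\[
\bigl|e^a-e^b\bigr|\leq |a-b|\int_0^1 e^{t\Re a+(1-t)\Re b}\,dt
\]
with hypothesis~\ref{transfer:one} yields $|e^{s\Psi^h(p)}-e^{s\Psi(p)}|\leq s f(h)R(|p|)$. Integrating over $[-P,P]$ gives a bound of the form $\mathrm{const}\cdot s\cdot f(h)$, which is $O(f(h))$.

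The main obstacle is the range $P<|p|\leq \pi/h$, where $R(|p|)$ can be as large as $(\pi/h)^{\deg R}$ and naively multiplying by $f(h)$ does not give a useful estimate. Here the uniform coercivity in hypothesis~\ref{transfer:two} saves us: applying the same complex inequality and using that both $\Re(s\Psi)$ and $\Re(s\Psi^h)$ are bounded above by $-Cs|p|^\epsilon$ on this range, we obtain
\[
\bigl|e^{s\Psi^h(p)}-e^{s\Psi(p)}\bigr|\leq s f(h) R(|p|)\, e^{-Cs|p|^\epsilon}.
\]
Integrating over $P<|p|\leq \pi/h$ (enlarging the domain to $|p|>P$) yields a bound $s f(h)\int_{|p|>P}R(|p|)e^{-Cs|p|^\epsilon}dp$, and this integral is finite (polynomial against stretched-exponential decay) and $h$-independent. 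This is again $O(f(h))$.

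Combining the three pieces, $\Delta_s(h)=\sup_{x,y}D^h_{t,T}(x,y)=O(f(h))$, uniformly in $x,y$ as the bounds above did not depend on $x$ or $y$. The multivariate extension alluded to after the statement will proceed identically, replacing the one-dimensional integrals by $d$-dimensional ones over $[-\pi/h,\pi/h]^d$ and its complement and invoking Assumption~\ref{assumption:multivariate} in place of~\ref{transfer:two}.
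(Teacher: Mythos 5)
Your proposal is correct and follows essentially the same route as the paper: the same decomposition into the tail $\vert p\vert>\pi/h$ (super-polynomial decay, hence $o(f(h))$) and the main integral, with your integral-form mean value inequality playing exactly the role of the paper's Lemma~\ref{lemma:keylemma} ($\vert e^z-e^v\vert\leq(\vert e^z\vert\lor\vert e^v\vert)\vert z-v\vert$), and the coercivity bound of hypothesis~\ref{transfer:two} used to integrate $R(\vert p\vert)e^{-Cs\vert p\vert^\epsilon}$ over the unbounded range. Your explicit split at $P$ is only a cosmetic elaboration of the paper's remark that the integration may be extended to all of $\mathbb{R}$.
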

Before proceeding to the proof of this proposition, we note explicitly the following elementary, but key lemma:

\begin{lemma}\label{lemma:keylemma}
For  $\{z,v\}\subset \mathbb{C}$: $\vert e^{z}-e^{v}\vert\leq (\vert e^{z}\vert \lor \vert  e^{v}\vert)\vert z-v\vert$.
\end{lemma}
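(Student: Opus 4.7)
The plan is to exploit the fact that along the straight-line segment from $v$ to $z$ in $\mathbb{C}$, the exponential is Lipschitz, and that its modulus depends monotonically on the real part of its argument, which varies along a convex combination.

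Concretely, I would first write, by the fundamental theorem of calculus applied to the entire function $t\mapsto e^{(1-t)v+tz}$ on $[0,1]$,
\[
e^{z}-e^{v}=(z-v)\int_{0}^{1}e^{(1-t)v+tz}\,dt.
\]
Then I would take moduli and pull the constant $|z-v|$ out to obtain
\[
|e^{z}-e^{v}|\leq |z-v|\int_{0}^{1}\bigl|e^{(1-t)v+tz}\bigr|\,dt = |z-v|\int_{0}^{1}e^{(1-t)\Re v+t\Re z}\,dt.
\]
The final step is the elementary observation that $(1-t)\Re v+t\Re z$ is a convex combination of $\Re v$ and $\Re z$, hence lies between $\Re v$ and $\Re z$; consequently
\[
e^{(1-t)\Re v+t\Re z}\leq e^{\Re v}\vee e^{\Re z}=|e^{v}|\vee|e^{z}|
\]
for every $t\in[0,1]$. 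Integrating over $t\in[0,1]$ gives the claim.

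There is no real obstacle here: the only modest subtlety is recognising that it is the \emph{real part} (not the modulus) of the argument that governs $|e^{\cdot}|$, which is why the convex-combination argument works cleanly. One could alternatively prove the inequality via the complex Mean Value Theorem of Evard--Jafari (already cited in the excerpt for Lemma~\ref{lemma:multivariate}), but the direct FTC proof above is shorter and self-contained.
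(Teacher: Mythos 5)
Your argument is correct. Writing $e^{z}-e^{v}=(z-v)\int_{0}^{1}e^{(1-t)v+tz}\,dt$, bounding $\vert e^{(1-t)v+tz}\vert=e^{(1-t)\Re v+t\Re z}$ by $e^{\Re v}\lor e^{\Re z}=\vert e^{v}\vert\lor\vert e^{z}\vert$ via the convex-combination observation, and integrating does yield the claim with no gaps. The paper organises the proof differently: it first factors out the dominant exponential (say $\vert e^{z}\vert\geq\vert e^{v}\vert$, so $\vert e^{z}-e^{v}\vert=\vert e^{z}\vert\,\vert e^{v-z}-1\vert$) and thereby reduces the lemma to the normalised inequality $\vert e^{w}-1\vert\leq\vert w\vert$ for $\Re w\leq 0$, which it asserts ``by direct estimation.'' The two routes rest on the same mechanism --- $\vert e^{w}\vert=e^{\Re w}$ is controlled by the endpoints of the segment --- and indeed your integral representation, specialised to $v=0$, is the cleanest way to carry out the paper's unproved ``direct estimation.'' What your version buys is that it is fully self-contained in one display and avoids the (harmless but implicit) symmetry reduction; what the paper's version buys is that it isolates a reusable one-variable inequality. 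Either is acceptable.
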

\begin{proof}
This follows from the inequality $\vert e^z-1\vert\leq\vert z\vert$ for $\Re z\leq 0$, whose validity may be seen by direct estimation. 
\end{proof}

\begin{proof}(Of Proposition~\ref{proposition:transfer}.)
From \eqref{equation:trnasitionkernelcont} and \eqref{equation:trnasitionkerneldiscrete} we have for the quantity $\Delta_s(h)$ from \eqref{eq:fundamental_notation}: 
$$\Delta_s(h)\leq \int_{\mathbb{R}\backslash (-\pi/h,\pi/h)} \vert \exp\{\Psi(p)s\}\vert dp+\int_{[-\pi/h,\pi/h]}\vert \exp\{\Psi^h(p)s\}-\exp\{\Psi(p)s\}\vert dp.$$ Then the first term decays faster than any power law in $h$ by \eqref{transfer:two} and L'H\^opital's rule, say, while in the second term we use the estimate of Lemma~\ref{lemma:keylemma}. Since $\exp\{-Ct\vert p\vert^\epsilon\}dp$ integrates every polynomial in $\vert p\vert$ absolutely, by \eqref{transfer:one} and \eqref{transfer:two} integration in the second term can then be extended to the whole of $\mathbb{R}$ and the claim follows. 
\end{proof}
This last proposition allows us to transfer the rates of convergence directly from those of the characteristic exponents to the transition kernels. In particular, Theorem~\ref{assumption:multivariate} follows from a straightforward extension (of the proof) of Proposition~\ref{proposition:transfer} to the multivariate setting, \ref{remark:characteristic_exponents:ii} of Remark~\ref{remark:characteristic_exponents}, Assumption~\ref{assumption:multivariate} and Remark~\ref{remark:multiariate:condition_discussion}. Returning to the  univariate case, analogous conclusions could be got from Remark~\ref{proposition:levydensity}, Proposition~\ref{proposition:uniform_coercivity} (themselves both consequences of Assumption~\ref{assumption}) and Proposition~\ref{proposition:someestimates}. In the sequel, however, in the case when $\diffusion>0$, we shall be interested in a more precise estimate of the constant in front of the leading order term ($D_1$ in the statement of Theorem~\ref{assumption:multivariate}). Moreover, we shall want to show our estimates are tight in an appropriate precise sense. 

To this end we assume given a function $K$ with the properties that: 
\begin{enumerate}[label=(F),ref=(F)]
\item \label{(F)} $0\leq K(h)\to \infty$ as $h\downarrow 0$ and $K(h)\leq \frac{\pi}{h}$ for all sufficiently small $h$;
\end{enumerate}
\begin{enumerate}[label=(E),ref=(E)]
\item \label{(E)} the quantity $$\mathcal{A}(h):=\left[\int_{-\infty}^{-K(h)}+\int_{K(h)}^\infty\right]\left\vert \exp\{\Psi(p)s\}\right\vert dp+\left[\int_{-\frac{\pi}{h}}^{-K(h)}+\int_{K(h)}^{\frac{\pi}{h}}\right]\left\vert \exp\{\Psi^h(p)s\}\right\vert dp$$ decays faster than the leading order term in the estimate of $D_{t,T}^h(x,y)$ (for which see, e.g., Table~\ref{table:characteristic_exponent_convergence});
\end{enumerate}
\begin{enumerate}[label=(C),ref=(C)]
\item  \label{(C)} $\sup_{[-K(h),K(h)]}\vert\Psi^h-\Psi\vert\leq 1$ for all small enough $h$.
\end{enumerate}
(suitable choices of $K$ will be identified later, cf. Table~\ref{table:choice_of_K(h)}). We now comment on the reasons behind these choices. 

First, the constants $\{C,P,\epsilon\}$ are taken so as to satisfy simultaneously Remark~\ref{proposition:levydensity} and Proposition~\ref{proposition:someestimates}. In particular, if $\diffusion>0$, we take $\epsilon=2$, $P=0$, $C=\frac{1}{2}\diffusion$, and if $\diffusion=0$, we may take $\epsilon$ from Orey's condition (cf. Assumption~\ref{assumption}). 

Next, we divide the integration regions in (\ref{equation:trnasitionkernelcont}) and (\ref{equation:trnasitionkerneldiscrete}) into five parts (cf. property \ref{(F)}): $(-\infty,-\frac{\pi}{h}],\ (-\frac{\pi}{h},-K(h)),\ [-K(h),K(h)],\ (K(h),\frac{\pi}{h}),\ [\frac{\pi}{h},\infty)$. Then we separate (via a triangle inequality) the integrals in the difference $D_{t,T}^h(x,y)$ accordingly and use the triangle inequality in the second and fourth region, thus (with $s:=T-t>0$):
\footnotesize
\begin{eqnarray*}
2\pi D_{t,T}^h(x,y)&\leq& \left[\int_{-\infty}^{-\pi/h}+\int_{\pi/h}^\infty\right]\left\vert \exp\{\Psi(p)s\}\right\vert dp+\left[\int_{-\frac{\pi}{h}}^{-K(h)}+\int_{K(h)}^{\frac{\pi}{h}}\right]\left(\left\vert \exp\{\Psi^h(p)s\}\right\vert +\left\vert \exp\{\Psi(p)s\}\right\vert\right) dp+\\
&+&\int_{-K(h)}^{K(h)}\left\vert \exp\{\Psi(p)s\}- \exp\{\Psi^h(p)s\}\right\vert dp.
\end{eqnarray*}
\normalsize
Finally, we gather the terms with $\left\vert \exp\{\Psi(p)s\}\right\vert$ in the integrand and use $\vert e^z-1\vert\leq e^{\vert z\vert}-1$ ($z\in\mathbb{C}$) to estimate the integral over $[-K(h),K(h)]$, so as to arrive at:
\begin{equation}
2\pi D_{t,T}^h(x,y)\leq \mathcal{A}(h)+\int_{-K(h)}^{K(h)}\vert \exp\{\Psi(p)s\}\vert\left( \exp\left\{s\left\vert \Psi^h(p)-\Psi(p)\right\vert\right\}-1\right) dp.\label{part:interior}
\end{equation}
Now, the rate of decay of $\mathcal{A}(h)$ can be controlled by choosing $K(h)$ converging to $+\infty$ fast enough, viz. property \ref{(E)}. On the other hand, in order to control the second term on the right-hand side of the inequality in \eqref{part:interior}, we choose $K(h)$ converging to $+\infty$ slowly enough so as to guarantee \ref{(C)}. Table~\ref{table:choice_of_K(h)} lists suitable choices of $K(h)$. It is easily checked from Table~\ref{table:characteristic_exponent_convergence} (resp. using L'H\^opital's rule coupled with Remark~\ref{proposition:levydensity} and Proposition~\ref{proposition:uniform_coercivity}), that these choices of $K(h)$ do indeed satisfy \ref{(C)} (resp. \ref{(E)}) above. Property \ref{(F)} is straightforward to verify. 

\begin{table}[!hbt]
\caption{Suitable choices of $K(h)$. For example, the $\diffusion>0$ and $\lambda(\mathbb{R})=0$ entry indicates that we choose $K(h)=\sqrt{\frac{2}{Cs}\log \frac{1}{h}}$ and then $\mathcal{A}(h)$ is of order $o(h^2)$.}\label{table:choice_of_K(h)}
\begin{center}
{
\renewcommand{\arraystretch}{1.5}
\begin{tabular}{|c|c|c|}\hline
& $\sigma^2>0$ (scheme 1) & $\sigma^2=0$ (scheme 2)\\\hline
$\lambda(\mathbb{R})=0$ ($V=0$) & $K(h)=\sqrt{\frac{2}{Cs}\log \frac{1}{h}}$ $\rightarrow$ $\mathcal{A}(h)=o(h^2)$  & $\times$\\\hline
$\lambda(\mathbb{R})<\infty$ ($V=0$) & $K(h)=\sqrt{\frac{1}{Cs}\log \frac{1}{h}}$ $\rightarrow$ $\mathcal{A}(h)=o(h)$  & $\times$\\\hline
$\kappa(0)<\infty=\lambda(\mathbb{R})$  ($V=1$)&  $K(h)=\sqrt{\frac{1}{Cs}\log \frac{1}{h}}$ $\rightarrow$ $\mathcal{A}(h)=o(h)$ & $K(h)=\sqrt[\epsilon]{\frac{2}{Cs}\log \frac{1}{h}}$ $\rightarrow$ $\mathcal{A}(h)=o(h)$ \\\hline
$\kappa(0)=\infty$ ($V=1$) & \multicolumn{2}{|c|}{$K(h)=\left(\frac{1}{\zeta(h/2)}\right)^{1/4}$ $\rightarrow$ $\mathcal{A}(h)=o(\zeta(h/2))$} \\\hline
\end{tabular}
}
\end{center}
\end{table}
Further, due to \ref{(C)}, for all sufficiently small $h$, everywhere on $[-K(h),K(h)]$: 
\small $$e^{s\vert \Psi^h-\Psi\vert}-1=s\vert \Psi^h-\Psi\vert+\sum_{k=2}^\infty\frac{(s\vert \Psi^h-\Psi\vert)^k}{k!}\leq s\vert \Psi^h-\Psi\vert+(s\vert \Psi^h-\Psi\vert)^2e^{s\vert \Psi^h-\Psi\vert}\leq s\vert \Psi^h-\Psi\vert+e(s\vert \Psi^h-\Psi\vert)^2.$$ \normalsize
Manifestly the second term will always decay strictly faster than the first (so long as they are not $0$). Moreover, since $ \exp\{-Cs\vert p\vert^\epsilon\}dp$ integrates every polynomial in $\vert p\vert$ (cf. the findings of Proposition~\ref{proposition:someestimates}) absolutely, it will therefore be sufficient in the sequel to estimate (cf. \eqref{part:interior}): 
\begin{equation}\label{equation:estimate}
\frac{s}{2\pi}\int_\mathbb{R} \exp\{-Cs\vert p\vert^\epsilon\}\left\vert \Psi^h(p)-\Psi(p)\right\vert dp.
\end{equation}

On the other hand, for the purposes of establishing sharpness of the rates for the quantity $D^h_{t,T}(x,y)$, we make the following: 

\begin{remark}[RD]\label{remark:RD}
Suppose we seek to prove that $f\geq 0$ converges to $0$ no faster than $g>0$, i.e. that $\limsup_{h\downarrow 0}f(h)/g(h)\geq C>0$ for some $C$. If one can show $f(h)\geq A(h)-B(h)$ and $B=o(g)$, then to show $\limsup_{h\downarrow 0}f(h)/g(h)\geq C$, it is sufficient to establish $\limsup_{h\downarrow 0}A(h)/g(h)\geq C$. We refer to this extremely useful principle as \emph{reduction by domination} (henceforth RD). 
\end{remark}

In particular, it follows from our above discussion, that it will be sufficient to consider (we shall always choose $x=y=0$):
\begin{equation}\label{equation:tightness}
\frac{s}{2\pi}\int_{-K(h)}^{K(h)} e^{s\Psi(p)}\left( \Psi^h(p)-\Psi(p)\right) dp,
\end{equation}
i.e. in Remark~\ref{remark:RD} this is $A$, and the difference to $D_{t,T}(0,0)$ represents $B$. Moreover, we can further replace $\Psi^h(p)-\Psi(p)$ in the integrand of \eqref{equation:tightness} by any expression whose difference to $\Psi^h(p)-\Psi(p)$ decays, upon integration, faster than the leading order term. For the latter reductions we (shall) refer to the proof of Proposition~\ref{proposition:someestimates}.

We have now brought the general discussion as far as we could. The rest of the analysis must invariably deal with each of the particular instances separately and we do so in the following two propositions. Notation-wise we let DCT stand for Lebesgue dominated convergence theorem.

\begin{proposition}[Convergence of transition kernels --- $\diffusion>0$]\label{propositon:convergence_kernels_diffusion_positive}
Suppose $\diffusion>0$. Then for any $s=T-t>0$: 
\begin{enumerate}
\item if $\measure(\mathbb{R})=0$: 
\begin{equation*}\label{eq:BM_and_drift_final}
\Delta_s(h)\leq h^2\left[\frac{1}{3\pi}\frac{\vert\mu\vert}{\sigma^4s}+\frac{1}{8\sqrt{2\pi}}\frac{1}{(s\sigma^2)^{3/2}}\right]+o(h^2).
\end{equation*}
Moreover, with $\sigma^2s=1$ and $\mu=0$ we have $\limsup_{h\downarrow 0}D_{t,T}^h(0,0)/h^2\geq 1/(8\sqrt{2\pi})$, proving that in general the convergence rate is no better than quadratic.
\item if $0<\lambda(\mathbb{R})<\infty$:
\begin{equation*}\label{eq:CP_and_diffusion_final}
\Delta_s(h)\leq h\frac{1}{2\pi}\frac{c}{\sigma^2}+o(h).
\end{equation*}
Moreover, with $\sigma^2=s=1$, $\mu=0$ and $\lambda=\frac{1}{2}(\delta_{1/2}+\delta_{-1/2})$ one has $\limsup_{h\downarrow 0}D_{t,T}^h(0,0)/h>0$ showing that convergence in general is indeed no better than linear.
\item if $\kappa(0)<\infty=\lambda(\mathbb{R})$:
\begin{equation*}\label{eq:finite_varuation_final}
\Delta_s(h)\leq h\left[\frac{1}{2\pi}\frac{d}{\sigma^2}+\frac{1}{2\sqrt{2\pi}}\frac{bs}{(\sigma^2s)^{3/2}}\right]+o(h).
\end{equation*}
Moreover, with $\sigma^2=s=1$, $\mu=0$ and $\lambda=\frac{1}{2}(\delta_{3/2}+\delta_{-3/2})+\frac{1}{2}\sum_{k=1}^\infty(\delta_{1/3^k}+\delta_{-1/3^k})$, one has $\limsup_{h\downarrow 0}D_{t,T}^h(0,0)/h>0$. 
\item if $\kappa(0)=\infty$:
\begin{equation*}
\Delta_s(h)\leq \frac{1}{\sqrt{2\pi}}\frac{s}{(\sigma^2s)^{3/2}}\left(\zeta(h/2)+\frac{1}{2}\gamma(h/2)\right)+o(\zeta(h/2)).
\end{equation*}
Moreover, with $\sigma^2=s=1$, $\mu=0$, and $\lambda=\sum_{k=1}^\infty w_k(\delta_{x_k}+\delta_{-x_k})$, where $x_n=\frac{3}{2}\frac{1}{3^n}$ and $w_n=1/x_n$ ($n\in\mathbb{N}$), one has $\limsup_{h\downarrow 0}D_{t,T}^h(0,0)/\zeta(h/2)>0$.
\end{enumerate}
\end{proposition}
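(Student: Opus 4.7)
The plan is to harness the setup developed immediately before the proposition: everything funnels through the estimate \eqref{part:interior} and its refinement, so that both the upper bound and the sharpness claim come down to controlling, respectively, \eqref{equation:estimate} and (via the RD principle of Remark~\ref{remark:RD}) \eqref{equation:tightness}. Since $\diffusion>0$, we work throughout with $\epsilon=2$, $P=0$, $C=\sigma^2/2$, so that the weight in \eqref{equation:estimate} is the Gaussian $\exp\{-s\sigma^2 p^2/2\}$, and the relevant choices $K(h)$ from Table~\ref{table:choice_of_K(h)} guarantee $\mathcal{A}(h)$ is of strictly smaller order than the leading term in each case. Thus the whole problem reduces to evaluating the Gaussian integrals of $|\Psi^h-\Psi|$ using the bounds supplied by Proposition~\ref{proposition:someestimates}, and then separately producing a matching lower bound.

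For the upper bounds, I would plug into $\frac{s}{2\pi}\int_\mathbb{R}\exp\{-s\sigma^2 p^2/2\}|\Psi^h(p)-\Psi(p)|dp$ the triangle-inequality bound $|\Psi^h-\Psi|\leq\sigma^2|f_h|+|\mu||g_h|+|l_h|$ together with the case-by-case estimates of Proposition~\ref{proposition:someestimates}, and then collect the dominant monomial in $|p|$ in each row of Table~\ref{table:characteristic_exponent_convergence}. All the coefficients follow from the standard moments $\int_\mathbb{R}|p|^k e^{-\alpha p^2}dp$ evaluated at $\alpha=s\sigma^2/2$: in case (1) the bound $\sigma^2 f_h\leq\sigma^2 p^4h^2/4!$ paired with $\int p^4e^{-\alpha p^2}dp=\tfrac{3}{4}\sqrt{\pi/\alpha^5}$ yields the constant $\frac{1}{8\sqrt{2\pi}}$, and the $|\mu||g_h|$ contribution gives the constant $\frac{1}{3\pi}$; in case (2) $|l_h|\leq c|p|h/2$ against $\int|p|e^{-\alpha p^2}dp=1/\alpha$ yields $\frac{c}{2\pi\sigma^2}$; in case (3) the two dominant pieces of $l_h$, namely $dh|p|/2$ and $bhp^2/2$, produce the sum $\frac{d}{2\pi\sigma^2}+\frac{bs}{2\sqrt{2\pi}(\sigma^2 s)^{3/2}}$ after using $\int p^2e^{-\alpha p^2}dp=\tfrac{1}{2}\sqrt{\pi/\alpha^3}$; and in case (4) the same $p^2$-integral paired with the $p^2(\zeta+\gamma/2)$ contribution delivers the claimed bound. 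All remaining terms in Proposition~\ref{proposition:someestimates} are genuinely of smaller order (either $h^2$ versus $h$ or $h$ versus $\zeta(h/2)$, since $h=o(\zeta(h/2))$ when $\kappa(0)=\infty$), and so are absorbed into the $o(\cdot)$ remainders.

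For sharpness I would apply RD so that it suffices to bound \eqref{equation:tightness} at $x=y=0$ from below; the sub-leading pieces of $\Psi^h-\Psi$ can be discarded by the same argument that proves the upper bounds. In case (1), $\sigma^2 f_h\geq 0$ so the integrand is nonnegative and the leading asymptotic computed above is in fact attained, giving precisely $\frac{1}{8\sqrt{2\pi}}$. In case (2), by symmetry $\mu^h=0$, and $l_h(p)=\cos(s_hp)-\cos(p/2)$ where $s_h\in h\mathbb{Z}$ is the lattice point with $1/2\in A_{s_h}^h$; along the subsequence $h_n=1/(2n+1)$ one has $s_{h_n}=1/2-h_n/2$ and the Gaussian integral of $\cos(s_h p)-\cos(p/2)$ evaluates to $\frac{1}{\sqrt{2\pi}}(e^{-s_h^2/2}-e^{-1/8})\sim\frac{e^{-1/8}}{4\sqrt{2\pi}}h$, yielding a strictly positive $\limsup$. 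Cases (3) and (4) follow analogously, with the measures concentrated on the triadic points $\pm1/3^k$ and the sub-sequence $h_n=1/3^n$: the atoms whose locations are exactly on the lattice contribute nothing to the error, while the remaining atoms produce a recurrent non-cancelling contribution of the correct order.

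The main obstacle is the sharpness argument in case (4): one must verify that for the singular L\'evy measure $\lambda=\sum_k w_k(\delta_{x_k}+\delta_{-x_k})$ the leading term in $l_h(p)$ contributes, upon integration against the Gaussian weight, to at least order $\zeta(h/2)$ along the subsequence $h_n=1/3^n$, rather than cancelling with the $\gamma(h/2)/2$ correction (or with the $O(h)$ tail of Proposition~\ref{proposition:someestimates}(iii)). This requires tracking precisely how the mass at each $\pm x_k$ ($k>n$) is redistributed into $c_0^{h_n}$ and $c_{\pm h_n}^{h_n}$, together with a careful expansion of $\cos(s_h p)-\cos(x_k p)$ to second order in $s_h-x_k$, and then invoking the explicit form $x_k=h_k\cdot 3/2$ to match the $\zeta(h_n/2)$ scaling. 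The upper-bound side is essentially bookkeeping on top of Proposition~\ref{proposition:someestimates}, so it is this quantitative non-cancellation on a carefully chosen subsequence that is the genuinely delicate part.
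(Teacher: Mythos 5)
Your overall architecture coincides with the paper's: the upper bounds are obtained by integrating the estimates of Proposition~\ref{proposition:someestimates} against the weight in \eqref{equation:estimate}, and the sharpness claims are reduced via RD to lower-bounding \eqref{equation:tightness} at $x=y=0$. The upper-bound half is correct --- your Gaussian moment computations reproduce all four constants --- and is exactly what the paper does (``simply by integration''). The sharpness half, however, has genuine gaps.

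First, in case (2) you have the wrong weight. The integrand of \eqref{equation:tightness} carries $e^{s\Psi(p)}$, and for $\lambda=\frac12(\delta_{1/2}+\delta_{-1/2})$, $\sigma^2=s=1$, $\mu=0$ this is $\exp\{-p^2/2+\cos(p/2)-1\}$, not the pure Gaussian $e^{-p^2/2}$. Your closed-form evaluation $\frac{1}{\sqrt{2\pi}}(e^{-s_h^2/2}-e^{-1/8})$ rests on $\int e^{-p^2/2}\cos(ap)\,dp=\sqrt{2\pi}\,e^{-a^2/2}$ and therefore does not compute the quantity you need. This is not cosmetic: after the legitimate RD reductions the object whose positivity must be shown is (up to constants) $\int_0^\infty p\sin(p/2)\exp\{-p^2/2-1+\cos(p/2)\}\,dp$, whose integrand changes sign on $(2\pi,4\pi)$, $(6\pi,8\pi)$, etc. Establishing that this integral is strictly positive is precisely the nontrivial content of the paper's argument for case (2) (it localises the positive mass on $[0,6]$ and bounds the tail), and your proposal replaces it with an identity that is false for the actual weight.

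Second, cases (3) and (4) are described but not proved. Your heuristic --- atoms sitting on the lattice $\mathbb{Z}_{h_n}$ contribute nothing, the off-lattice atoms produce a non-cancelling contribution of the right order --- is the correct picture, but the quantitative non-cancellation is exactly the point, and you explicitly defer it (``this requires tracking precisely\dots''). For the record, the paper's route in case (4) is rather different from the atom-by-atom second-order expansion you sketch: it computes $\sigma_1-\sigma_2=2\zeta(h_n/2)-\gamma(h_n/2)\geq\zeta(h_n/2)$ for the difference of the relevant second moments, and then reduces, via \eqref{lemma:someestimates:iii} of Lemma~\ref{lemma:someestimates}, to the observation that $\int_{-K(h_n)}^{K(h_n)}p^2e^{\Psi(p)}\,dp$ tends to a strictly positive limit --- which is immediate because $\Psi$ is real-valued for this symmetric $\lambda$, so no oscillation/cancellation issue arises at all. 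Case (3) likewise requires an explicit positivity verification of a sign-changing integral, carried out by hand in the paper. As it stands, your proposal establishes the upper bounds but not the sharpness assertions in cases (2)--(4).
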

\begin{proof}
Estimates of $\Delta_s(h)$ follow at once from \eqref{equation:estimate} and Proposition~\ref{proposition:someestimates}, simply by integration. As regards establishing sharpness of the estimates, however, we have as follows (recall that we always take $x=y=0$):
\begin{enumerate}
\item $\measure(\mathbb{R})=0$. Using \eqref{equation:tightness} it is sufficient to consider: $$A(h):=\frac{1}{2\pi}\left\vert \int_{-K(h)}^{K(h)}\exp\left\{-\frac{1}{2}p^2\right\}f_h(p)dp\right\vert.$$ By DCT, we have $A(h)/h^2\to \frac{1}{2\pi}\int_{-\infty}^\infty \exp\{-\frac{1}{2}p^2\}p^4/4!dp$ and the claim follows. 
\item  $0<\lambda(\mathbb{R})<\infty$.  Using \eqref{equation:tightness} and further RD via the estimates in the proof of Proposition~\ref{proposition:someestimates}, we conclude that it is sufficient to observe for the sequence $(h_n=\frac{1}{3^n})_{n\geq 1}\downarrow 0$ that: $$\limsup_{n\to\infty}\frac{1}{2\pi h_n}\left\vert \int_{-K(h_n)}^{K(h_n)}\exp\left\{-\frac{1}{2}p^2-1+\cos(p/2)\right\}l_{h_n}(p)dp\right\vert>0.$$ It is also clear that we may express: \footnotesize $$l_{h_n}(p)=2\frac{1}{2}\Re \left(e^{ip(1/2-h_n/2)}-e^{ip/2}\right)=\cos(p/2)(\cos(ph_n/2)-1)+\sin(p/2)\sin(ph_n/2).$$ \normalsize Therefore, by further RD, it will be sufficient to consider:  $$\limsup_{n\to\infty}\frac{1}{2\pi h_n}\left\vert \int_{-K(h_n)}^{K(h_n)}\exp\left\{-\frac{1}{2}p^2-1+\cos(p/2)\right\}\sin(ph_n/2)\sin(p/2)dp\right\vert.$$ By DCT it is equal to: $$I:=\frac{1}{2\pi}\int_0^\infty p\sin(p/2)\exp\{-\frac{1}{2}p^2-1+\cos(p/2)\}dp.$$ 
The numerical value of this integral is (to one decimal place in units of $2\pi$) $0.4/(2\pi)$, but we can show that $I>0$ analytically. Indeed the integrand is positive on $[0,6]$. Hence $2\pi eI\geq \sin(1/2)e^{\cos(3/2)}\int_1^3pe^{-p^2/2}dp-e\int_6^\infty pe^{-p^2/2}dp=\sin(1/2)e^{\cos(3/2)}[e^{-1/2}-e^{-9/2}]-e^{-17}$. Now use $\sin(1/2)\geq (1/2)\cdot (2/\pi)$ (which follows from the concavity of $\sin$ on $[0,\pi/2]$), so that, very crudely: $2\pi eI\geq (1/\pi)e^{-1/2}(1-e^{-4})-e^{-17}\geq (1/\pi)e^{-1/2}(1/2)-e^{-17}\geq (1/e^2)e^{-1/2}(1/e)-e^{-17}\geq e^{-4}-e^{-17}>0$. 
\item  $\kappa(0)<\infty=\lambda(\mathbb{R})$. Let $h_n=1/3^n$, $n\geq 1$. Because the second term in $\lambda$ lives on $\cup_{n\in\mathbb{N}}\mathbb {Z}_{h_n}$, it is seen quickly (via RD) that one need only consider (to within non-zero multiplicative constants): \footnotesize $$\limsup_{n\to\infty}\int_{-K(h_n)}^{K(h_n)}\frac{1}{h_n}\sin(ph_n/2)\sin (3p/2)\exp \left\{-\frac{1}{2}p^2+(\cos(3p/2)-1)+\sum_{k=1}^\infty(\cos(p/3^k)-1)\right\}dp.$$ \normalsize By DCT it is sufficient to observe that: \footnotesize $$\int_0^{2\pi/3}\sin(3p/2)p\exp\left\{-\frac{1}{2}p^2+(\cos(3p/2)-1)-\frac{p^2}{2}\sum_{k=1}^\infty\frac{1}{9^k}\right\}dp-\int_{2\pi/3}^\infty p\exp\left\{-\frac{1}{2}p^2\right\}dp>0.$$ \normalsize To see the latter, note that the second integral is immediate and equal to: $e^{-(2\pi/3)^2/2}$. As for the first one, make the change of variables $u=3p/2$. Thus we need to establish that: $$A:=(4/(9e))\int_0^\pi\sin(u)u\exp\{-u^2/4+\cos(u)\}du-e^{-(2\pi/3)^2/2}>0.$$ Next note that $-u^2/4+\cos(u)$ is decreasing on $[0,\pi]$ and the integrand in $A$ is positive. It follows that: 
\footnotesize
$$A\geq \frac{4}{9e}\left[\int_0^{\pi/3}u\sin(u)\exp\left\{-\frac{1}{4}\left(\frac{\pi}{3}\right)^2+\cos\left(\frac{\pi}{3}\right)\right\}du+\int_{\pi/3}^{\pi/2}u\sin(u)\exp\left\{-\frac{1}{4}\left(\frac{\pi}{2}\right)^2+\cos\left(\frac{\pi}{2}\right)\right\}du\right]-e^{-2\pi^2/9}.$$ 
\normalsize
Using integration by parts, it is now clear that this expression is algebraic over the rationals in $e$, $\sqrt{3}$ and the values of the exponential function at rational multiples of $\pi^2$. Since this explicit expression can be estimated from below by a positive quantity, 
one can check that $A>0$.
\item\label{convergence:proof:infinite_variation}$\kappa(0)=\infty$. Let again $h_n=1/3^n$, $n\geq 1$. Notice that: \footnotesize $$\sigma_1:=\int_{[-1,1]\backslash [-h_n/2,h_n/2]} u^2d\lambda(u)=2\sum_{k=1}^nx_k^2w_k,\text{ and }\sigma_2:=\sum_{s\in S_{h_n}\backslash \{0\}}c_s^{h_n}s^2=2\sum_{k=1}^n\left(x_k-\frac{h_n}{2}\right)^2w_k,$$ \normalsize so that $\Delta:=\sigma_1-\sigma_2=2\zeta(h_n/2)-\gamma(h_n/2)\geq \zeta(h_n/2)$. Using \eqref{lemma:someestimates:iii} of Lemma~\ref{lemma:someestimates} in the estimates of Proposition~\ref{proposition:someestimates}, it is then not too difficult to see that it is sufficient to show $\int_{-K(h_n)}^{K(h_n)}p^2\exp\{\Psi(p)\}dp$ converges to a strictly positive value as $n\to\infty$, which is transparent (since $\Psi$ is real valued). 
\end{enumerate}
\end{proof}

\begin{proposition}[Convergence of transition kernels --- $\diffusion=0$]\label{propositon:convergence_kernels_diffusion_zero}
Suppose $\diffusion=0$. For any $s=T-t>0$: 
\begin{enumerate}
\item if Orey's condition is satisfied and $\kappa(0)<\infty=\lambda(\mathbb{R})$, then $\Delta_s(h)=O(h)$. Moreover, with $\sigma^2=0$, $s=1$, $\mu=0$ and $\lambda=\frac{1}{2}\sum_{k=1}^\infty w_k(\delta_{x_k}+\delta_{-x_k})$, where, $x_n=\frac{3}{2}\frac{1}{3^n}$ and $w_n=1/\sqrt{x_n}$ ($n\in\mathbb{N}$), Orey's condition holds with $\epsilon=1/2$ and one has $\limsup_{h\downarrow 0}D_{t,T}^h(0,0)/h>0$. 
\item if Orey's condition is satisfied and $\kappa(0)=\infty$, then $\Delta_s(h)=O(\zeta(h/2))$. Moreover, with $\sigma^2=0$, $s=1$, $\mu=0$, and $\lambda=\sum_{k=1}^\infty w_k(\delta_{x_k}+\delta_{-x_k})$, where $x_n=\frac{3}{2}\frac{1}{3^n}$ and $w_n=1/x_n$ ($n\in\mathbb{N}$),  Orey's condition holds with $\epsilon=1$ and one has $\limsup_{h\downarrow 0}D_{t,T}^h(0,0)/\zeta(h/2)>0$.
\end{enumerate}
\end{proposition}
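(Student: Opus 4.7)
The plan is to follow the two-step template set by the proof of Proposition~\ref{propositon:convergence_kernels_diffusion_positive}, with the role of the Gaussian coercivity $\diffusion>0$ now taken over by Orey's condition via Proposition~\ref{proposition:uniform_coercivity}.

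\emph{Upper bounds.} Both $O(h)$ in (1) and $O(\zeta(h/2))$ in (2) are immediate from Proposition~\ref{proposition:transfer} applied to scheme~2 with $V=1$: hypothesis \eqref{transfer:one} is supplied by Proposition~\ref{proposition:someestimates} (the third, respectively fourth, row of Table~\ref{table:characteristic_exponent_convergence}, giving $f(h)=h$ in (1) and $f(h)=\zeta(h/2)$ in (2)), while hypothesis \eqref{transfer:two} is supplied by Remark~\ref{proposition:levydensity} and Proposition~\ref{proposition:uniform_coercivity}, the exponent $\epsilon\in(0,2)$ being the one from Orey's condition. That $\zeta(h/2)\to 0$ but decays no faster than some power of $h$ (so Proposition~\ref{proposition:transfer} genuinely applies) follows from Lemma~\ref{lemma:somethingsvnaish}.

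\emph{Sharpness.} Here the strategy is reduction by domination (Remark~\ref{remark:RD}), with $x=y=0$. Combining \eqref{part:interior} with the discussion immediately following it reduces the claim to showing that
\[
\left|\frac{s}{2\pi}\int_{-K(h)}^{K(h)}e^{s\Psi(p)}\bigl(\Psi^h(p)-\Psi(p)\bigr)dp\right|
\]
has strictly positive $\limsup$ upon division by $h$ (case (1)) or by $\zeta(h/2)$ (case (2)). A second RD, via the term-by-term estimates from the proof of Proposition~\ref{proposition:someestimates}, lets one replace the integrand by the leading L\'evy-aggregation contribution from $l_h$, the remainder decaying strictly faster. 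For the chosen measures, supported on $\{\pm\tfrac{3}{2}\cdot 3^{-k}:k\in\mathbb{N}\}$, the sequence $h_n:=3^{-n}$ has the property that every atom of $\lambda$ lies on the lattice $h_n\mathbb{Z}$, which yields an explicit expression for $l_{h_n}$ of the same shape as the one exploited in case (3) of Proposition~\ref{propositon:convergence_kernels_diffusion_positive}. Verifying that Orey's condition holds with $\epsilon=1/2$ (resp. $\epsilon=1$) is a direct geometric-series computation: $\int_{[-x_n,x_n]}u^2 d\lambda(u)$ is a constant multiple of $x_n^{3/2}$ (resp. of $x_n$).

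\emph{The main obstacle.} What remains, and is the hardest point, is to show strict positivity of the resulting limit. In case (1), one is led, after dominated convergence on the rescaled integral, to verifying that an integral of the form $\int_0^\infty p\sin(3p/2)e^{s\Psi(p)}dp$ is nonzero; as in case (3) of Proposition~\ref{propositon:convergence_kernels_diffusion_positive}, this is handled by splitting $[0,\infty)$ into subintervals of constant sign of the $\sin$-factor and bounding each piece below by an algebraic combination of $e$, $\sqrt{3}$ and exponentials of rational multiples of $\pi^2$, which can then be checked numerically to be strictly positive. In case (2), one directly transposes the argument of part~\eqref{convergence:proof:infinite_variation} of Proposition~\ref{propositon:convergence_kernels_diffusion_positive}: Lemma~\ref{lemma:someestimates}\eqref{lemma:someestimates:iii} shows that, modulo lower-order terms, the leading contribution factorises as $(2\zeta(h_n/2)-\gamma(h_n/2))\int_{-K(h_n)}^{K(h_n)}p^2 e^{s\Psi(p)}dp$, with $2\zeta(h_n/2)-\gamma(h_n/2)\geq \zeta(h_n/2)$, and the remaining integral tends, by monotone convergence, to a strictly positive limit since $\Psi$ is real-valued and $p^2 e^{s\Psi(p)}$ is integrable under Orey's condition.
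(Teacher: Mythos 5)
Your overall architecture (upper bounds via Proposition~\ref{proposition:transfer} fed by Proposition~\ref{proposition:someestimates} and Proposition~\ref{proposition:uniform_coercivity}; sharpness via reduction by domination starting from \eqref{equation:tightness}) is exactly the paper's, and your treatment of case (2) --- transposing part (4) of Proposition~\ref{propositon:convergence_kernels_diffusion_positive}, with $\mu^h=0$ by symmetry --- is correct. But case (1) contains a genuine gap. First, a factual slip that matters: for $h_n=3^{-n}$ the atoms $\pm x_k=\pm\tfrac{3}{2}3^{-k}$ do \emph{not} lie on the lattice $h_n\mathbb{Z}$; they sit at half-odd-integer multiples of $h_n$, i.e.\ at distance exactly $h_n/2$ from the nearest lattice point. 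This is the worst case for the aggregation error and is precisely what makes the example sharp --- if the atoms were on the lattice, their contribution to the leading term would vanish and the example would prove nothing. More importantly, because \emph{every} atom is at a half-lattice point, every atom contributes to the leading term, and after the reductions and dominated convergence the integral you must show is nonzero is
\begin{equation*}
\int_0^\infty \Bigl(\sum_{k=1}^\infty w_k\sin(px_k)\Bigr)\,p\,\exp\Bigl\{\sum_{k=1}^\infty(\cos(px_k)-1)w_k\Bigr\}\,dp,
\end{equation*}
not an integral ``of the form $\int_0^\infty p\sin(3p/2)e^{s\Psi(p)}dp$''. The single-sine form arises in case (3) of Proposition~\ref{propositon:convergence_kernels_diffusion_positive} only because there the small atoms $1/3^k$ \emph{do} lie on the lattice and drop out, leaving only the atom at $3/2$.

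Consequently the method you propose for the ``main obstacle'' --- splitting $[0,\infty)$ into intervals where the sine factor has constant sign and bounding each piece numerically --- does not transfer: the oscillating factor here is the lacunary series $\psi(p)=\sum_{k\geq 1}3^{k/2}\sin(\tfrac{3}{2}p\,3^{-k})$ (up to a constant, since $w_k=1/\sqrt{x_k}$), whose sign structure is not apparent and whose amplitudes grow without bound, so no finite truncation controls the tail term-by-term. The paper resolves this with a separate result, Lemma~\ref{lemma:somefunction_positive}, proving $\psi>0$ on all of $(0,\infty)$ by a self-similarity induction based on the identity $\psi(3p)=\sqrt{3}\bigl(\sin(3p/2)+\psi(p)\bigr)$: one checks $\psi>0$ on $(0,\pi/2]$ and $\psi>\sqrt{3}/(\sqrt{3}-1)$ on $(\pi/2,3\pi/2]$ by hand, and then propagates these bounds to $(0,3B]$ and $(3B,9B]$. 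With pointwise positivity of the integrand in hand, no estimation of the integral is needed at all. This lemma (or something equivalent) is the missing ingredient in your argument; without it the nonvanishing of the limit in case (1) is unproved.
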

\begin{proof}
Again the rates of convergence for $\Delta_s(h)$ follow at once from \eqref{equation:estimate} and Proposition~\ref{proposition:someestimates} (or, indeed, from Proposition~\ref{proposition:transfer}). As regards sharpness of these rates, we have  (recall that we take $x=y=0$): 
\begin{enumerate}
\item $\kappa(0)<\infty=\lambda(\mathbb{R})$. Let $h_n=1/3^n$, $n\geq 1$. By checking Orey's condition on the decreasing sequence $(h_n)_{n\geq 1}$, Assumption~\ref{assumption} is satisfied with $\epsilon=1/2$ and we have $b<\infty=c$. $\mu^h=0$ by symmetry. Moreover by \eqref{equation:tightness}, and by further going through the estimates of Proposition~\ref{proposition:someestimates} using RD, it suffices to show: $$\limsup_{n\to\infty}\frac{1}{h_n}\left\vert \int_{-K(h_n)}^{K(h_n)}\exp\{\Psi(p)\}\left(\sum_{s\in \mathbb{Z}_{h_n}\backslash \{0\}}\int_{A_s^{h_n}}\left(\cos(ps)-\cos(pu)\right)d\lambda(u)\right)dp\right\vert>0.$$ Now, one can write for $s\in \mathbb{Z}_{h_n}\backslash \{0\}$ and $u\in A_s^{h}$,
\begin{equation*}
\cos(sp)-\cos(pu)=\cos(pu)(\cos((s-u)p)-1)-\sin(pu)(\sin((s-u)p)-(s-u)p)-\sin(pu)(s-u)p
\end{equation*}
and via RD get rid of the first two terms (i.e. they contribute to $B$ rather than $A$ in Remark~\ref{remark:RD}). It follows that it is sufficient to observe: 
$$\limsup_{n\to\infty}\frac{1}{h_n}\left\vert \int_{-K(h_n)}^{K(h_n)}\exp\left\{\sum_{k=1}^\infty(\cos(px_k)-1)w_k\right\}\left(\sum_{k=1}^nw_k\sin(px_k)\right)h_npdp\right\vert>0.$$
Finally, via DCT and evenness of the integrand, we need only have:
$$\int_{0}^\infty\left(\sum_{k=1}^\infty w_k\sin(px_k)\right)p\exp\left\{\sum_{k=1}^\infty(\cos(px_k)-1)w_k\right\}dp\ne 0.$$ One can in fact check that the integrand is strictly positive, as Lemma~\ref{lemma:somefunction_positive} shows, and thus the proof is complete. 
\item $\kappa(0)=\infty$. The example here works for the same reasons as it did in \eqref{convergence:proof:infinite_variation} of the proof of Proposition~\ref{propositon:convergence_kernels_diffusion_positive} (but here benefiting explicitly also from $\mu^h=0$). We only remark that Orey's condition is of course fulfilled with $\epsilon=1$, by checking it on the decreasing sequence $(h_n)_{n\geq 1}$.
\end{enumerate}
\end{proof}

\begin{lemma}\label{lemma:somefunction_positive}
Let $\psi(p):=\sum_{k=1}^\infty 3^{k/2}\sin(\frac{3}{2}p/3^k)$. Then $\psi$ is strictly positive on $(0,\infty)$.  
\end{lemma}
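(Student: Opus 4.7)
The key structural fact I would exploit is the self-similarity of $\psi$: the substitution $k\mapsto k+1$ in the defining series yields the functional equation
\[
\psi(p) \;=\; \sqrt{3}\,\sin(p/2) \;+\; \sqrt{3}\,\psi(p/3), \qquad p>0.
\]
My plan is to establish a \emph{quantitative} positive lower bound for $\psi$ on a compact base interval, and then iterate that bound along the dyadic-like intervals $B_n:=(2\pi\cdot 3^{n-1},2\pi\cdot 3^n]$ using the functional equation, thereby covering $(0,\infty)$.

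For the base, I first observe that for $p\in(0,2\pi]$ every summand $3^{k/2}\sin(3p/(2\cdot 3^k))$ is nonnegative (when $k=1$ the argument $p/2\in(0,\pi]$, and for $k\geq 2$ the argument lies in $(0,\pi/3]$), while the $k=2$ summand is strictly positive, so $\psi>0$ already on $(0,2\pi]$. To set up the iteration I need a strictly positive \emph{lower bound} on the subinterval $[2\pi/3,2\pi]$: consider $f(p):=\sqrt{3}\sin(p/2)+3\sin(p/6)$, whose two summands are concave on $[2\pi/3,2\pi]$ (both arguments lie in $[0,\pi]$), so $f$ is strictly concave and attains its minimum at an endpoint; a direct comparison of $f(2\pi/3)=3/2+3\sin(\pi/9)$ and $f(2\pi)=3\sqrt{3}/2$ (using $\sin(\pi/9)<\pi/9<(\sqrt{3}-1)/2$) gives $\min f = 3/2+3\sin(\pi/9)=:M_0$. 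Since the remaining terms ($k\geq 3$) of the defining series are nonnegative on this interval, $\psi(p)\geq M_0$ for $p\in[2\pi/3,2\pi]$.

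Now partition $(0,\infty) = (0,2\pi/3]\cup\bigsqcup_{n\geq 0}B_n$, and show by induction on $n\geq 0$ that $\psi\geq M_n$ on $B_n$, with $M_{n+1}:=\sqrt{3}(M_n-1)$. The case $n=0$ is the previous paragraph. For the step, $p\in B_{n+1}$ gives $p/3\in B_n$, so the functional equation together with $\sin(p/2)\geq -1$ gives
\[
\psi(p) \;=\; \sqrt{3}\sin(p/2)+\sqrt{3}\psi(p/3) \;\geq\; -\sqrt{3}+\sqrt{3}M_n \;=\; M_{n+1}.
\]
The affine recursion $M\mapsto\sqrt{3}(M-1)$ has the unique (repelling) fixed point $L:=\sqrt{3}/(\sqrt{3}-1)=(3+\sqrt{3})/2$, and one computes $M_n-L=3^{n/2}(M_0-L)$; provided $M_0>L$, this forces $M_n\geq M_0 > L > 0$ for every $n$, so $\psi>0$ on every $B_n$. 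Combined with $\psi>0$ on $(0,2\pi/3]\subset(0,2\pi]$ this covers all of $(0,\infty)$.

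The only substantive analytical input, and thus the main obstacle, is the inequality $M_0>L$, which after rearrangement reads $\sin(\pi/9)>\sqrt{3}/6$. I would verify this rigorously via the triple-angle identity $\sin(3\theta)=3\sin\theta-4\sin^3\theta$: applied at $\theta=\pi/9$ with $\sin(\pi/3)=\sqrt{3}/2$, it shows $\sin(\pi/9)$ is a root of $g(x):=8x^3-6x+\sqrt{3}$. A direct calculation gives $g(\sqrt{3}/6)=\sqrt{3}/9>0$, while $g'(x)=24x^2-6<0$ on $(0,1/2)$ means $g$ is strictly decreasing there. Because $\sin(\pi/9)\in(0,\sin(\pi/6))=(0,1/2)$, the (unique) root of $g$ in $(0,1/2)$ must exceed $\sqrt{3}/6$, yielding $\sin(\pi/9)>\sqrt{3}/6$ and closing the argument.
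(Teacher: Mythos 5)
Your proof is correct and follows essentially the same strategy as the paper's: both exploit the self-similarity relation $\psi(3p)=\sqrt{3}(\sin(3p/2)+\psi(p))$ and iterate a lower bound exceeding the fixed point $\sqrt{3}/(\sqrt{3}-1)$ of $M\mapsto\sqrt{3}(M-1)$ from a base interval out to all of $(0,\infty)$. The only real difference is in how the base bound is certified — the paper evaluates three terms of $\psi(\pi/2)$ and checks monotonicity on $[\pi/2,3\pi/2]$, leaving the numerics to the reader, whereas you use concavity of the two-term minorant on $[2\pi/3,2\pi]$ and reduce everything to the algebraically verified inequality $\sin(\pi/9)>\sqrt{3}/6$, which is arguably tidier.
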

\begin{proof}
We observe that, (i) $\psi\vert_{(0,\frac{\pi}{2}]}>0$ and (ii) for $p\in (\pi/2,3\pi/2]$ we have: $\psi(p)>\sqrt{3}/(\sqrt{3}-1)=:A_0$. Indeed, (i) is trivial since for $p\in (0,\pi/2]$, $\psi(p)$ is defined as a sum of strictly positive terms. We verify (ii) by observing that (ii.1) $\psi(\pi/2)>A_0$ and (ii.2) $\psi$ is nondecreasing on $[\pi/2,3\pi/2]$. Both these claims are tedious but elementary to verify by hand. Indeed, with respect to (ii.1), summing three terms of the series defining $\psi(\pi/2)$ is sufficient. Specifically we have $\psi(\pi/2)> \sqrt{3}\sin(\pi/4)+3\sin(\pi/12)+3\sqrt{3}\sin(\pi/36)$ and we estimate $\sin(\pi/36)\geq \frac{\pi}{36}\sin(\pi/3)/(\pi/3)$. With respect to (ii.2) we may differentiate under the summation sign, and then $\psi'(p)\geq \frac{\sqrt{3}}{2}\cos(3\pi/4)+\frac{1}{2}\cos(\pi/4)+\frac{\sqrt{3}}{6}\cos(\pi/12)$. The final details of the calculations are left to the reader. 

Finally, we claim that if for some $B>0$ we have $\psi\vert_{(0,B]}>0$ and $\psi\vert_{(B,3B]}>A_0$, then $\psi\vert_{(0,3B]}>0$ and $\psi\vert_{(3B,9B]}>A_0$, and hence the assertion of the lemma will follow at once (by applying the latter first to $B=\pi/2$, then $B=3\pi/2$ and so on). So let $3p\in (3B,9B]$, i.e. $p\in (B,3B]$. Then $\psi(3p)=\sqrt{3}(\sin(3p/2)+\psi(p))>\sqrt{3}(-1+A_0)=A_0$, as required. 
\end{proof}

\section{Convergence of expectations and algorithm}\label{section:expectations:numerics}

\subsection{Convergence of expectations}\label{subsection:convergence_of_expectations}
For the sake of generality we state the results in the multivariate setting, but only do so, when this is not too burdensome on the brevity of exposition. For $d=1$, either the multivariate or the univariate schemes may be considered.

Let $f:\mathbb{R}^d\to \mathbb{R}$ be bounded Borel measurable and define for $t\geq 0$ and $h\in (0,h_\star)$: $p_t:=p_{0,t}$ and $P^h_t:=P^h_{0,t}$, whereas for $x\in \Zh^d$, we let $p_t(x):=p_t(0,x)$ and $P_t^h(x)=P_t^h(0,x)$ (assuming the continuous densities exist). Note that for $t\geq 0$, and then for $x\in \mathbb{R}^d$, 
\begin{equation}\label{eq:expectation:original}
\EE^x[f\circ X_t]=\int_\mathbb{R^d}f(y)p_t(x,y)dy,
\end{equation}
whereas for $x\in \Zh^d$ and $h\in (0,h_\star)$: 
\begin{equation}\label{eq:expectation:approximation}
\EE^x[f\circ X^h_t]=\sum_{y\in \Zh^d}f(y)P^h_t(x,y).
\end{equation}
Moreover, if $f$ is continuous, we know that, as $h\downarrow 0$, $\EE^x[f\circ X^h_t]\to \EE^x[f\circ X_t]$, since $X^h_t\to X_t$ in distribution. Next, under additional assumptions on the function $f$, we are able to establish the rate of this convergence and how it relates to the convergence rate of the transition kernels, to wit:

\begin{proposition}\label{proposition:expectations}
Assume \eqref{eq:assumption:multivariate:two} of Assumption~\ref{assumption:multivariate}. Let $h_0\in (0,\infty)$, $g:(0,h_0)\to (0,\infty)$ and $t>0$ be such that $\Delta_t=O(g)$. Suppose furthermore that the following two conditions on $f$ are satisfied:
\begin{enumerate}[(i)]
\item \label{expectation:i} $f$ is (piecewise\footnote{In the sense that there exists some natural $n$, and then disjoint open intervals $(I_i)_{i=1}^n$, whose union is cofinite in $\mathbb{R}$, and such that $f\vert_{I_i}$ is Lipschitz for each $i\in \{1,\ldots,n\}$.}, if $d=1$) Lipschitz continuous.
\item \label{expectation:ii} $\sup_{h\in (0,h_0)} h^d\sum_{x\in\Zh^d}\vert f(x)\vert<\infty$. 
\end{enumerate}
Then: $$\sup_{x\in \Zh^d}\vert \EE^x[f\circ X_t]-\EE^x[f\circ X^h_t]\vert=O(h\lor g(h)).$$
\end{proposition}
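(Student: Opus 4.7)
The plan is to write both $\EE^x[f\circ X_t]$ and $\EE^x[f\circ X_t^h]$ as sums indexed by the partition $\{A_z^h\}_{z\in\Zh^d}$ of $\mathbb{R}^d$ and then to compare them cell by cell. By spatial homogeneity of $X$ and $X^h$ one has $p_t(x,y)=p_t(0,y-x)$ and $P_t^h(x,y)=P_t^h(0,y-x)$; substituting $z=y-x$ and using $\int_{A_z^h}du=h^d$ yields, for $x\in\Zh^d$,
\begin{equation*}
\EE^x[f\circ X_t]-\EE^x[f\circ X_t^h]=\sum_{z\in\Zh^d}\int_{A_z^h}\left[f(x+u)p_t(0,u)-f(x+z)\tfrac{1}{h^d}P_t^h(0,z)\right]du.
\end{equation*}
I would then insert and subtract first $f(x+z)p_t(0,u)$ and then $f(x+z)p_t(0,z)$, decomposing the right-hand side as $\mathrm{I}+\mathrm{II}+\mathrm{III}$ with
\begin{eqnarray*}
\mathrm{I}&:=&\sum_{z\in\Zh^d}\int_{A_z^h}[f(x+u)-f(x+z)]p_t(0,u)\,du,\\
\mathrm{II}&:=&\sum_{z\in\Zh^d}f(x+z)\int_{A_z^h}[p_t(0,u)-p_t(0,z)]\,du,\\
\mathrm{III}&:=&h^d\sum_{z\in\Zh^d}f(x+z)\left[p_t(0,z)-\tfrac{1}{h^d}P_t^h(0,z)\right].
\end{eqnarray*}

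Each piece would be handled directly. For $\mathrm{III}$ one estimates $|\mathrm{III}|\le\Delta_t(h)\cdot h^d\sum_{y\in\Zh^d}|f(y)|$, which by the hypothesis $\Delta_t=O(g)$ and condition (ii) of the proposition gives $|\mathrm{III}|=O(g(h))$; the identity $\sum_{z\in\Zh^d}|f(x+z)|=\sum_{y\in\Zh^d}|f(y)|$ (valid precisely because $x\in\Zh^d$) is what makes this bound uniform in $x$. For $\mathrm{II}$, the tail estimate \eqref{eq:assumption:multivariate:two} together with the Fourier representation of $p_t(0,\cdot)$ legitimises differentiation under the integral sign and yields $\|\nabla p_t(0,\cdot)\|_\infty<\infty$; since $\mathrm{diam}(A_z^h)\le h\sqrt{d}$ this gives $|p_t(0,u)-p_t(0,z)|=O(h)$ on each cell, whence $|\mathrm{II}|=O(h)$ again by (ii). Finally, when $f$ is globally Lipschitz the same diameter bound forces $|f(x+u)-f(x+z)|=O(h)$ uniformly on $A_z^h$, and $\int_{\mathbb{R}^d}p_t(0,u)\,du=1$ immediately gives $|\mathrm{I}|=O(h)$. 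Summing the three estimates produces the claimed $O(h\lor g(h))$ bound, uniformly in $x\in\Zh^d$.

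The main obstacle, and the only part requiring a little care, is the treatment of $\mathrm{I}$ when $d=1$ and $f$ is merely piecewise Lipschitz: on those finitely many cells $A_z^h$ whose $x$-translate contains a discontinuity of $f$ the pointwise Lipschitz estimate fails. To handle them I would split $\mathrm{I}$ into \emph{good} cells (on which the global Lipschitz estimate applies as above) and \emph{bad} cells, of which there are at most $2(n-1)$ uniformly in $x$ and $h$ (with $n$ the number of Lipschitz pieces). On each bad cell I would use only the crude bound $|f(x+u)-f(x+z)|\le 2\|f\|_\infty$, combine it with the uniform boundedness of $p_t(0,\cdot)$ (again from the Fourier representation), and note that the total Lebesgue measure of the bad cells is $\le 2(n-1)h=O(h)$, so that their combined contribution is still $O(h)$. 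This recovers the $O(h)$ estimate for $\mathrm{I}$ under hypothesis (i) in full generality and closes the proof.
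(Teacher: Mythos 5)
Your proposal is correct and follows essentially the same route as the paper: the identical three-term decomposition (variation of $f$ within a cell, variation of $p_t$ within a cell, and the kernel difference weighted by $h^d\sum|f|$), with the same ingredients — condition (ii) plus $\Delta_t=O(g)$ for the third term, Lipschitz continuity of $p_t$ from the Fourier representation for the second, and Lipschitz continuity of $f$ with $\int p_t=1$ for the first. Your explicit good-cell/bad-cell treatment of the piecewise Lipschitz case is just a spelled-out version of the paper's remark that boundedness of $p_t$ (together with the standing boundedness of $f$) suffices there.
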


\begin{remark}
\begin{enumerate}
\item Condition \ref{expectation:ii} is fulfilled in the univariate case $d=1$, if, e.g.: $f\in L^1(\mathbb{R})$, w.r.t. Lebesgue measure, $f$ is locally bounded and for some $K\in [0,\infty)$, $\vert f\vert\vert_{(-\infty,-K]}$ (restriction of $\vert f\vert$ to $(-\infty,-K ]$) is nondecreasing, whereas $\vert f\vert\vert_{[K,\infty)}$ is nonincreasing. 
\item The rate of convergence of the expectations is thus got by combining the above proposition with the findings of Theorems~\ref{theorem:convergencerates} and~\ref{theorem:convergence:multivariate}. 
\end{enumerate}
\end{remark}

\begin{proof}
Decomposing the difference $\EE^x[f\circ X_t]-\EE^x[f\circ X^h_t]$ via \eqref{eq:expectation:original} and \eqref{eq:expectation:approximation}, we have: 
\begin{eqnarray}
\label{expectations:estimate:one}\EE^x[f\circ X_t]-\EE^x[f\circ X^h_t]&=&\sum_{y\in \Zh^d}\int_{A_y^h}\left(f(z)-f(y)\right)p_t(x,z)dz+\\
\label{expectations:estimate:two}&+&\sum_{y\in\Zh^d}\int_{A_y^h}f(y)\left(p_t(x,z)-p_t(x,y)\right) dz+\\
\label{expectations:estimate:three}&+&\sum_{y\in \Zh^d}f(y)h^d\left[p_t(x,y)-\frac{1}{h^d}P^h_t(x,y)\right].
\end{eqnarray}
Now, \eqref{expectations:estimate:three} is of order $O(g(h))$, by condition \ref{expectation:ii} and since $\Delta_t=O(g)$. Further, \eqref{expectations:estimate:one} is of order $O(h)$ on account of condition \ref{expectation:i}, and since $\int p_t(x,z)dz=1$ for any $x\in \mathbb{R}^d$ (to see piecewise Lipschitzianity is sufficient in dimension one ($d=1$), simply observe $\sup_{\{x,y\}\subset \mathbb{R}}p_t(x,y)$ is finite, as follows immediately from the integral representation of $p_t$). Finally, note that $p_t(x,\cdot)$ is also Lipschitz continuous (uniformly in $x\in \mathbb{R}^d$), as follows again at once from the integral representation of the transition densities. Thus, \eqref{expectations:estimate:two} is also of order $O(h)$, where again we benefit from condition \ref{expectation:ii} on the function $f$.
\end{proof}
In order to be able to relax condition \ref{expectation:ii} of Proposition~\ref{proposition:expectations}, we first establish the following Proposition~\ref{proposition:gmoments}, which concerns finiteness of moments of $X_t$. 

In preparation thereof, recall the definition of submultiplicativity of a function $g:\mathbb{R}^d\to [0,\infty)$:
\begin{equation}\label{eq:submultiplicativity}
g\text{ is submultiplicative}\Leftrightarrow \exists a\in (0,\infty)\text{ such that }g(x+y)\leq a g(x)g(y)\text{, whenever }\{x,y\}\subset \mathbb{R}^d
\end{equation}
and we refer to \cite[p. 159, Proposition 25.4]{sato} for examples of such functions. Any submultiplicative locally bounded function $g$ is necessarily bounded in exponential growth \cite[p. 160, Lemma 25.5]{sato}, to wit: 
\begin{equation}\label{eq:subexp}
\exists \{b,c\}\subset (0,\infty)\text{ such that }g(x)\leq be^{c\vert x\vert}\text{ for }x\in \mathbb{R}^d.
\end{equation}

\begin{proposition}\label{proposition:gmoments}
Let $g:\mathbb{R}^d\to [0,\infty)$ be measurable, submultiplicative and locally bounded, and suppose $\int_{\mathbb{R}^d\backslash [-1,1]^d} g d \Measure<\infty$. Then for any $t>0$, $\EE[g\circ X_t]<\infty$ and, moreover, there is an $h_0\in (0,h_\star)$ such that $$\sup_{h\in (0,h_0)} \EE[g\circ X^h_t]<\infty.$$ Conversely, if $\int_{\mathbb{R}^d\backslash [-1,1]^d} g d \Measure=\infty$, then for all $t>0$, $\EE[g\circ X_t]=\infty$.
\end{proposition}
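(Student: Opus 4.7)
The first and last assertions, which concern the L\'evy process $X$ itself, are classical: they amount to Theorem~25.3 of \cite{sato} (of which \eqref{eq:submultiplicativity}--\eqref{eq:subexp} already record the relevant preliminaries). The only new content is thus the uniform-in-$h$ bound $\sup_{h\in(0,h_0)}\EE[g\circ X^h_t]<\infty$, which the plan is to deduce by exploiting the compound Poisson structure of $X^h$ (with L\'evy measure $\Measure^h$) together with the submultiplicativity and local boundedness of $g$.

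First I would decompose $X^h=X^{h,s}+X^{h,l}$ as the sum of two independent compound Poisson processes obtained by splitting $\Measure^h$ into its restrictions to $[-1,1]^d$ and to its complement. Submultiplicativity and independence yield $\EE[g\circ X^h_t]\le a\,\EE[g\circ X^{h,s}_t]\,\EE[g\circ X^{h,l}_t]$, so it suffices to bound each factor uniformly in $h$. For the large-jump component, the standard compound Poisson expansion combined with the iterated estimate $g(y_1+\cdots+y_n)\le a^{n-1}g(y_1)\cdots g(y_n)$ gives
\[
\EE[g\circ X^{h,l}_t]\le g(0)+a^{-1}\exp\Bigl\{t\textstyle\int (ag-1)\,d\Measure^h\big|_{\mathbb{R}^d\setminus[-1,1]^d}\Bigr\}.
\]
This exponent is controlled uniformly in $h$ since, for $s\in\Zh^d$ with $|s|>1$ and $x\in A_s^h$, submultiplicativity and local boundedness yield $g(s)\le aMg(x)$ with $M:=\sup_{|y|\le h_0\sqrt{d}/2}g(y)<\infty$; summing in $s$ gives $\int g\,d\Measure^h|_{\{|x|>1\}}\le aM\int_{\{|x|>1/2\}}g\,d\Measure<\infty$, finite by the hypothesis together with local boundedness of $g$ on the annulus $\{1/2<|x|\le 1\}$.

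For the small-jump component $X^{h,s}$, I would invoke \eqref{eq:subexp} to write $g(x)\le be^{c|x|}$ and reduce the task to bounding $\EE[e^{c|X^{h,s}_t|}]$ uniformly in $h$. Since the jumps of $X^{h,s}$ are bounded (by $\sqrt{d}$ once $h\le 1$), the moment generating function $\EE[\exp\langle\xi,X^{h,s}_t\rangle]=\exp\{t\psi^{h,s}(\xi)\}$ is entire in $\xi\in\mathbb{R}^d$ with
\[
\psi^{h,s}(\xi)=\Bigl\langle\xi,\textstyle\int_{[-1,1]^d}y\,d\Measure^h\Bigr\rangle+\int_{[-1,1]^d}\bigl(e^{\langle\xi,y\rangle}-1-\langle\xi,y\rangle\bigr)\,d\Measure^h.
\]
Using $|e^{\langle\xi,y\rangle}-1-\langle\xi,y\rangle|\le|\xi|^2|y|^2 e^{|\xi|}/2$ for $|y|\le\sqrt{d}$, the nonlinear piece is dominated by the second moment $\int_{[-1,1]^d}|y|^2\,d\Measure^h$, which is uniformly bounded: the diffusion-approximating jumps at $\pm he_j$ with rates $\sim\sigma_j^2/h^2$ contribute $\sim\sigma_j^2$, while the aggregated part is controlled by the elementary inequality $|y|^2\le 2|x|^2+h^2/2$ for $x\in A_s^h$ together with the Chebyshev-type bound $(h/2)^2\Measure([-1,1]^d\setminus[-h/2,h/2]^d)\le\int_{[-1,1]^d}|x|^2\,d\Measure<\infty$ valid for any L\'evy measure.

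The main obstacle is the linear piece $\int_{[-1,1]^d}y\,d\Measure^h$: the rates $\sim\sigma_j^2/h^2$ at $\pm he_j$ carry first-moment contributions of individual size $\sim\sigma_j^2/h$ which are a priori unbounded. The crucial cancellation is that the symmetric part $(\sigma_j^2+c_{0j}^h)/(2h^2)$ of these rates contributes zero net first moment, leaving only the asymmetric part from the drift discretization, whose net contribution is exactly $\mu_j-\mu_j^h$ (in either the centred or one-sided case); combining with the first-moment contribution $\sum_{0<|s|\le 1}s\,c^h_s$ from the remaining aggregated jumps and comparing with the defining formula for $\mu^h$ shows that $\int_{[-1,1]^d}y\,d\Measure^h$ equals $\mu$ up to an $O(1)$ boundary error arising from lattice points $s\in\Zh^d$ with $|s|_\infty$ within $h/2$ of $1$, the latter being bounded by $(1+h/2)\Measure(\mathbb{R}^d\setminus[-1/2,1/2]^d)$. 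Feeding the resulting uniform bounds on $\psi^{h,s}$ and on $\EE[g\circ X^{h,l}_t]$ back into the product estimate then delivers the desired uniform control on $\EE[g\circ X^h_t]$.
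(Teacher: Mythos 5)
Your argument is correct, and its skeleton coincides with the paper's: the same splitting of $X^h$ into independent compound Poisson pieces according to whether the jump lattice point lies in $[-1,1]^d$ or not, the same submultiplicativity product bound, and essentially the same series estimate for the large-jump factor together with the comparison $\int g\,d\Measure^h_1\leq a\big(\sup_{A_0^h}g\big)\int_{\mathbb{R}^d\setminus[-1/2,1/2]^d}g\,d\Measure$. Where you genuinely diverge is the uniform bound for the small-jump factor. The paper reduces, via $g\leq be^{c\vert\cdot\vert}$ and Cauchy--Schwarz, to bounding $\EE[e^{2^{d-1}c\vert X^{h2}_{tj}\vert}]$ coordinatewise, and does this by complex-analytic means: the characteristic exponents extend to entire functions with $\Psi^h_2\to\Psi_2$ locally uniformly on $\mathbb{C}$, hence $\phi^h_2\to\phi_2$ locally uniformly, and Cauchy's estimate $\vert(\phi^h_2)^{(n)}(0)\vert\leq n!\,M^h/r^n$ with $r>2^{d-1}c$ controls all moments at once, after which the exponential series is summed. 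You instead bound the real Laplace exponent of the small-jump compound Poisson part directly, which forces you to verify two explicit uniform-in-$h$ facts: that the second moment of $\Measure^h\vert_{[-1,1]^d}$ is bounded (elementary, via the Chebyshev-type bound and the finiteness of $\int_{[-1,1]^d}\vert x\vert^2d\Measure$), and --- the delicate point --- that the \emph{signed} first moment $\int_{[-1,1]^d}y\,d\Measure^h$ is bounded, the absolute first moment being possibly unbounded when $\kappa(0)=\infty$. Your identification of the cancellation (symmetric diffusion rates contribute nothing, the drift asymmetry contributes exactly $\mu_j-\mu_j^h$, and $\mu_j^h$ cancels against the first moment of the aggregated cells up to an $O(1)$ boundary term) is exactly right; in the paper this cancellation is implicit in the definition of the scheme and in the decomposition of $\Psi^h-\Psi$ from Proposition~\ref{proposition:someestimates}, re-used over complex arguments. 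Your route is more elementary, avoiding entire-function theory entirely, at the cost of this explicit bookkeeping; the paper's route outsources the bookkeeping to estimates already established, at the cost of invoking Cauchy's estimate and locally uniform convergence on $\mathbb{C}$. Both are complete proofs.
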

\begin{proof}
The argument follows the exposition given in \cite[pp. 159-162]{sato}, modifying the latter to the extent that uniform boundedness over $h\in (0,h_0)$ may be got. In particular, we refer to \cite[p. 159, Theorem 25.3]{sato} for the claim that $\EE[g\circ X_t]<\infty$, if and only if $\int_{\mathbb{R}^d\backslash [-1,1]^d} g d \Measure<\infty$. We take $\{a,b,c\}\subset (0,\infty)$ satisfying \eqref{eq:submultiplicativity} and \eqref{eq:subexp} above. Recall also that $\lambda^h$ is the L\'evy measure of the process $X^h$, $h\in (0,h_\star)$. 

Now, decompose $X=X^1+X^2$ and $X^h=X^{h1}+X^{h2}$, $h\in (0,h_\star)$ as independent sums, where $X^1$ is compound Poisson, L\'evy measure $\lambda_1:=\mathbbm{1}_{\mathbb{R}^d\backslash [-1,1]^d}\cdot \Measure$, and $X^{h1}$ are also compound Poisson, L\'evy measures $\lambda_1^h:=\mathbbm{1}_{\mathbb{R}^d\backslash [-1,1]^d}\cdot \measure^h$, $h\in (0,h_\star)$. Consequently $X^2$ is a L\'evy process with characteristic triplet $(\Sigma,\Indd\cdot \Measure,\Drift)_{\tilde{c}}$ and $X^{h2}$ are compound Poisson, L\'evy measures $\Indd\cdot \Measure^h$, $h\in (0,h_\star)$. Moreover, for $h\in (0,h_\star)$, by submultiplicativity and independence: 
\begin{equation*}\label{equation:gmoments_estimate}
\EE[g\circ X^h_t]=\EE[g\circ (X^{h1}_t+X^{h2}_t)]\leq a\EE[g\circ X^{h1}_t]\EE[g\circ X^{h2}_t].
\end{equation*}

We first estimate $\EE[g\circ X^{h1}_t]$. Let $(J_n)_{n\geq 1}$ (resp. $N_t$) be the sequence of jumps (resp. number of jumps by time $t$) associated to (resp. of) the compound Poisson process $X^{h1}$. Then $X^{h1}_t=\sum_{j=1}^{N_t}J_j$ and so by submultiplicativity:
\begin{eqnarray*}
\EE[g\circ X^{h1}_t]&\leq&\EE\left[g(0)\mathbbm{1}_{\{N_t=0\}}+a^{N_t-1}\prod_{j=1}^{N_t}g(J_j)\mathbbm{1}_{\{N_t>0\}}\right]\\
&=&g(0)e^{-t\Measure^{h}_1(\mathbb{R}^d)}+\sum_{n=1}^\infty\frac{t^{n}a^{n-1}}{n!}e^{-t\Measure^{h}_1(\mathbb{R}^d)}\left(\int gd\Measure^h_1\right)^n.
\end{eqnarray*}
We also have for all $h\in (0,1\land h_\star)$:
\begin{eqnarray*}
\int gd\Measure^{h}_1&=&\sum_{s\in \Zh^d\backslash [-1,1]^d}\int_{A_s^h}g(s)d\measure=\sum_{s\in \Zh^d\backslash [-1,1]^d}\int_{A_s^h}g(u+(s-u))d\measure(u)\\
& \leq&a\left(\sup_{k\in A^0_h}g(k)\right)\sum_{s\in \Zh^d\backslash [-1,1]^d}\int_{A_s^h}gd\measure, \text{by submultiplicativity}\\
&\leq& a\left(\sup_{k\in A^0_1}g(k)\right)\int_{\mathbb{R}^d\backslash [-1/2,1/2]^d}gd\lambda.
\end{eqnarray*}
Now, since $g$ is locally bounded, $\Measure$ is finite outside neighborhoods of $0$, and since by assumption $\int_{\mathbb{R}^d\backslash [-1,1]^d} g d\Measure<\infty$, we obtain: $\sup_{h\in (0,1\land h_\star)}\EE[g\circ X^{h1}_t]<\infty$.

Second, we consider $\EE[g\circ X^{h2}_t]$. First, by boundedness in exponential growth and the triangle inequality: 
\begin{equation*}
\EE[g\circ X^{h2}_t]\leq b\EE[e^{c\vert X^{h2}_t\vert}]\leq b\EE[e^{c\sum_{j=1}^d\vert X^{h2}_{tj}\vert}]=b\EE\left[\prod_{j=1}^d e^{c\vert X^{h2}_{tj}\vert}\right].
\end{equation*}
It is further seen by a repeated application of the Cauchy-Schwartz inequality that it will be sufficient to show, for each $j\in \{1,\ldots,d\}$, that for some $h_0\in (0,h_\star]$: $$\sup_{h\in (0,h_0)}\EE\left[e^{2^{d-1}c\vert X^{h2}_{tj}\vert}\right]<\infty.$$ Here $X_t^{h2}=(X^{h2}_{t1},\ldots,X^{h2}_{td})$ and likewise for $X^2_t$. Fix $j\in \{1,\ldots,d\}$. 

The characteristic exponent of $X^{h2}_j$, denoted $\Psi^{h}_2$, extends to an entire function on $\mathbb{C}$. Likewise for the characteristic exponent of $X^2_j$, denoted $\Psi_2$ \cite[p. 160, Lemma 25.6]{sato}. Moreover, since, by expansion into power series, one has, locally uniformly in $\beta\in \mathbb{C}$, as $h\downarrow 0$:

\begin{itemize}
\item $\frac{e^{\beta h}+e^{-\beta h}-2}{2h^2}\to \frac{1}{2}\beta^2$;
\item $\frac{e^{\beta h}-e^{-\beta h}}{2h}\to \beta$;
\item $\frac{e^{\beta h}-1}{h}\to \beta$ and $\frac{1-e^{-\beta h}}{h}\to\beta$;
\end{itemize}
since furthermore: 
\begin{itemize}
\item $\left((\beta,u)\mapsto \frac{e^{\beta u}-\beta u-1}{u^2}\right):\mathbb{R}\backslash \{0\}\times \mathbb{C}\to \mathbb{C}$ is bounded on bounded subsets of its domain;
\end{itemize}
and since finally by the complex Mean Value Theorem  \cite[p. 859, Theorem 2.2]{evard}: 
\begin{itemize}
\item as applied to the function $(x\mapsto e^{\beta x}): \mathbb{C}\to\mathbb{C}$; $\vert e^{\beta x}-e^{\beta y}\vert\leq \vert x-y\vert\vert \beta\vert(\vert e^{\beta z_1}\vert+\vert e^{\beta z_2} \vert)$ for some $\{z_1,z_2\}\subset \conv(\{x,y\})$, for all $\{x,y\}\subset \mathbb{R}$;
\item as applied to the function $(x\mapsto e^{\beta x}-\beta x):\mathbb{C}\to\mathbb{C}$; $\vert e^{\beta x}-\beta x-(e^{\beta y}-\beta y)\vert\leq \vert x-y\vert \vert \beta\vert \left(\vert e^{\beta z_1}-1\vert+\vert e^{\beta z_2}-1\vert\right)$ for some $\{z_1,z_2\}\in  \conv(\{x,y\})$, for all $\{x,y\}\subset \mathbb{R}$;
\end{itemize}
then the usual decomposition of the difference $\Psi^{h}_2-\Psi_2$ (see proof of Proposition~\ref{proposition:someestimates}) shows that $\Psi^{h}_2\to\Psi_2$ locally uniformly in $\mathbb{C}$ as $h\downarrow 0$. Next let $\phi^{h}_2$ and $\phi_2$ be the characteristic functions of $X^{h2}_{tj}$ and $X^2_{tj}$, respectively, $h\in (0,h_\star)$; themselves entire functions on $\mathbb{C}$. Using the estimate of Lemma~\ref{lemma:keylemma}, we then see, by way of corollary, that also $\phi^{h}_2\to \phi_2$ locally uniformly in $\mathbb{C}$ as $h\downarrow 0$. 

Now, since $\phi_2^h$ is an entire function, for $n\in \mathbb{N}\cup \{0\}$, $i^n\EE[(X^{h2}_{tj})^n]=(\phi^h_2)^{(n)}(0)$ and it is Cauchy's estimate \cite[p. 184, Lemma 10.5]{stewart} that, for a fixed $r>2^{d-1}c$, $\left\vert(\phi^h_2)^{(n)}(0)\right\vert\leq \frac{n!}{r^n}M^h$, where $M^h:=\sup_{\{z\in \mathbb{C}:\vert z \vert=r\}}\vert \phi^h_2\vert$. Observe also that for some $h_0\in (0,h_\star]$, $\sup_{h\in (0,h_0)}M^h<\infty$, since $\phi^{h}_2\to \phi_2$ locally uniformly as $h\downarrow 0$ and $\phi_2$ is continuous (hence locally bounded).

Further to this $\EE[\vert X^{h2}_{tj}\vert^{2k+1}]\leq 1+ \EE[\vert X^{h2}_{tj}\vert^{2k+2}]$ ($k\in\mathbb{N}\cup\{0\}$) and $\EE\left[e^{2^{d-1}c\vert X^{h2}_{tj}\vert}\right]=\sum_{n=0}^\infty\frac{1}{n!}\EE[\vert X^{h2}_{tj}\vert^n](c2^{d-1})^n.$ From this the desired conclusion finally follows. 

\end{proof}
The following result can now be established in dimension $d=1$: 

\begin{proposition}\label{propositon:expectations:growth}
Let $d=1$ and $t>0$. Let furthermore:
\begin{enumerate}[(i)]
\item\label{expectation:condition:one} $g:\mathbb{R}\to [0,\infty)$, measurable, satisfy $\EE[g\circ X_t]<\infty$, $g$ locally bounded, submultiplicative, $g\ne 0$.
\item\label{expectation:condition:two} $f:\mathbb{R}\to\mathbb{R}$, measurable, be locally bounded, $\int_\mathbb{R}\vert f\vert \in (0,\infty]$, $\vert f\vert$ ultimately monotone (i.e. $\vert f\vert\vert_{[K,\infty)}$ and $\vert f\vert\vert_{(-\infty,-K]}$ monotone for some $K\in [0,\infty)$), $\vert f\vert/\vert g\vert$ ultimately nonincreasing (i.e. $(\vert f\vert/\vert g \vert)\vert_{[K,\infty)}$ and $(-\vert f\vert/\vert g \vert)\vert_{(-\infty,-K]}$ nonincreasing for some $K\in [0,\infty)$), and with the following Lipschitz property holding for some $\{a,A\}\in (0,\infty)$: $f\vert_{[-A,A]}$ is piecewise Lipschitz, whereas $$\vert f(z)-f(y)\vert\leq a \vert z-y\vert (g(z)+g(y)),\text{ whenever } \{z,y\}\subset \mathbb{R}\backslash (-A,A).$$
\item $K:(0,\infty)\to [0,\infty)$, with $\lim_{0+}K=+\infty$.
\end{enumerate}
Then $\vert \EE[f\circ X_t]-\EE[f\circ X^h_t]\vert$ is of order: 
\begin{equation}\label{expectations:growth}
O\left(\left(\int_{[-K(h),K(h)]}\mkern-70mu\vert f(x)\vert dx\right)\left(h\lor \Delta_t(h)\right)+\left(\frac{\vert f\vert}{\vert g\vert}\lor \frac{\vert f\vert}{\vert g\vert}\circ (-\mathrm{id}_\mathbb{R})\right)\left(K(h)-3h/2\right)\right),
\end{equation}
where $\Delta_t(h)$ is defined in \eqref{eq:fundamental_notation}.
\end{proposition}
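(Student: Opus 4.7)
The plan is to extend the proof of Proposition~\ref{proposition:expectations} by truncating $f$ at level $K(h)$: without the uniform bound $\sup_h h\sum_{x\in\Zh}|f(x)|<\infty$, the contribution of $f$ for $|x|>K(h)$ must be absorbed via its $g$-dominated growth, not through the kernel convergence rate $\Delta_t(h)$.

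I would start from the three-term decomposition
\begin{align*}
\EE[f\circ X_t]-\EE[f\circ X^h_t]
&=\sum_{y\in\Zh}\int_{A^h_y}(f(z)-f(y))p_t(z)\,dz+\sum_{y\in\Zh}f(y)\int_{A^h_y}(p_t(z)-p_t(y))\,dz\\
&\quad+\sum_{y\in\Zh}f(y)\bigl[hp_t(y)-P^h_t(y)\bigr],
\end{align*}
and split every $\sum_{y\in\Zh}$ into an interior part $S^{\mathrm{in}}_h:=\{y\in\Zh:A^h_y\subset[-K(h),K(h)]\}$ and its complement. On $S^{\mathrm{in}}_h$, the third sum is controlled by $\Delta_t(h)\,h\sum_{|y|\leq K(h)}|f(y)|$; ultimate monotonicity of $|f|$ combined with local boundedness on $[-A,A]$ compares this Riemann sum with $\int_{[-K(h),K(h)]}|f(x)|\,dx$ up to a bounded additive error, which is absorbed into the leading term since $\int|f|>0$. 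The first interior sum is $O(h)$: for $|y|\leq A$ via piecewise Lipschitzness (finitely many exceptional intervals $A^h_y$, each contributing $O(h)$), and for $A<|y|\leq K(h)$ via the hypothesis $|f(z)-f(y)|\leq a|z-y|(g(z)+g(y))$, which after a submultiplicativity-based comparison $g(y)\lesssim g(z)$ on $A^h_y$ yields a bound of order $h\,\EE[g\circ X_t]$. The second interior sum is $O(h)$ times the Riemann sum above, by Lipschitz continuity of $p_t$ -- immediate from differentiating its Fourier-integral representation under Assumption~\ref{assumption:multivariate}.

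For the exterior, every $y\in\Zh\setminus S^{\mathrm{in}}_h$ and every $z\in A^h_y$ satisfy $|y|,|z|>K(h)-3h/2$; by ultimate nonincrease of $|f|/g$ (and of its mirror image), $|f|\leq C^\star_f\cdot g$ on this region, where $C^\star_f:=\bigl(|f|/g\vee(|f|/g)\circ(-\mathrm{id}_\mathbb{R})\bigr)(K(h)-3h/2)$. Each of the three exterior sums then reduces to a multiple of $C^\star_f\bigl(\EE[g\circ X_t]+\EE[g\circ X^h_t]\bigr)$, which is $O(C^\star_f)$ by Proposition~\ref{proposition:gmoments}, giving the second term of~\eqref{expectations:growth}.

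The main obstacle is bookkeeping: ensuring that the several $O(h)$ and $O(1)$ remainders coming from Lipschitzness near the origin and from Riemann-sum comparison slack are genuinely lower order than $\int_{[-K(h),K(h)]}|f|\cdot(h\vee\Delta_t(h))$, and that the $3h/2$ geometric buffer absorbs both the continuous tail $\int_{|z|>K(h)}|f|p_t\,dz$ and the lattice tail $\sum_{|y|>K(h)-h/2}|f(y)|P^h_t(y)$ in a single clean expression. All the analytic inputs -- Lipschitzness of $p_t$, submultiplicativity-based comparison of $g$-values on $A^h_y$, and the $h$-uniform bound on $\EE[g\circ X^h_t]$ -- are provided by Assumption~\ref{assumption:multivariate} and Proposition~\ref{proposition:gmoments}.
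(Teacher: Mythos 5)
Your proposal is correct and follows essentially the same route as the paper's proof: the identical three-term interior decomposition (Lipschitz slack of $f$, Lipschitz slack of $p_t$, and the kernel difference weighted by the Riemann sum of $|f|$), with the lattice and continuous tails beyond $K(h)$ absorbed via $|f|\leq(|f|/g)(K(h)-3h/2)\cdot g$ and the $h$-uniform $g$-moment bound of Proposition~\ref{proposition:gmoments}. The bookkeeping concerns you flag (the $3h/2$ buffer, using $\int|f|>0$ to absorb the $O(h)$ remainders into the leading term) are exactly the points the paper handles by its final replacement of $K(h)$ by $(K(h)-h)\lor 0$.
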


\begin{remark}\label{remark:expectations:growth}
\begin{enumerate}
\item In \eqref{expectations:growth} there is a balance of two terms, viz. the choice of the function $K$. Thus, the slower (resp. faster) that $K$ increases to $+\infty$ at $0+$, the better the convergence of the first (resp. second) term, provided $f\notin L^1(\mathbb{R})$ (resp. $\vert f\vert/\vert g\vert$ is ultimately converging to $0$, rather than it just being nonincreasing). In particular, when so, then the second term can be made to decay arbitrarily fast, whereas the first term will always have a convergence which is strictly worse than $h\lor \Delta_t(h)$. But this convergence can be made arbitrarily close to $h\lor \Delta_t(h)$ by choosing $K$ increasing all the slower (this since $f$ is locally bounded). In general the choice of $K$ would be guided by balancing the rate of decay of the two terms.
\item Since, in the interest of relative generality, (further properties of) $f$ and $\measure$ are not specified, thus also $g$ cannot be made explicit. Confronted with a specific $f$ and L\'evy process $X$, we should like to choose $g$ approaching infinity (at $\pm\infty$) as fast as possible, while still ensuring $\EE[g\circ X_t]<\infty$ (cf. Proposition~\ref{proposition:gmoments}). This makes, \emph{ceteris paribus}, the second term in \eqref{expectations:growth} decay as fast as possible. 
\item We exemplify this approach by considering two examples. Suppose for simplicity $\Delta_t(h)=O(h)$. 
\begin{enumerate}
\item\label{remark:expectations:growth:polynomial} Let first $\vert f\vert $ be bounded by $(x\mapsto A\vert x\vert^n)$ for some $A\in (0,\infty)$ and $n\in \mathbb{N}$, and assume that for some $m\in (n,\infty)$, the function $g=(x\mapsto \vert x\vert^m\lor 1)$ satisfies $\EE[g\circ X_t]<\infty$ (so that \ref{expectation:condition:one} holds). Suppose furthermore condition \ref{expectation:condition:two} is satisfied as well (as it is for, e.g., $f=(x\mapsto x^n)$). It is then clear that the first term of \eqref{expectations:growth} will behave as $\sim K(h)^{n+1}h$, and the second as $\sim K(h)^{-(m-n)}$, so we choose $K(h)\sim 1/h^{1/(1+m)}$ for a rate of convergence which is of order $O(h^{\frac{m-n}{m+1}})$. 
\item\label{remark:expectations:growth:exponential} Let now $\vert f\vert $ be bounded by $(x\mapsto Ae^{\alpha\vert x\vert})$ for some $\{A,\alpha\}\subset (0,\infty)$, and assume that for some $\beta\in (\alpha,\infty)$, the function $g=(x\mapsto e^{\beta\vert x\vert})$ indeed satisfies  $\EE[g\circ X_t]<\infty$ (so that \ref{expectation:condition:one} holds). Suppose furthermore condition \ref{expectation:condition:two} is satisfied as well (as it is for, e.g., $f=(x\mapsto (e^{\alpha x}-k)^+)$, where $k\in [0,\infty)$ --- use Lemma~\ref{lemma:keylemma}). It is then clear that the first term of \eqref{expectations:growth} will behave as $\sim e^{\alpha K(h)}h$, and the second as $\sim e^{-(\beta-\alpha)K(h)}$, so we choose, up to a bounded additive function of $h$, $K(h)=\log (1/h^{1/\beta})$ for a rate of convergence which is of order $O(h^{1-\frac{\alpha}{\beta}})$. 
\end{enumerate}
\end{enumerate}
\end{remark}
\begin{proof}(Of Proposition~\ref{propositon:expectations:growth}.)
This is a simple matter of estimation; for all sufficiently small $h>0$:
\begin{eqnarray}\nonumber
&&\vert \EE[f\circ X_t]-\EE[f\circ X^h_t]\vert=\left\vert \int_\mathbb{R}f(z)p_t(z)dz-\sum_{y\in\Zh}f(y)P^h_t(y)\right\vert\\\nonumber
&\leq&\left\vert \sum_{y\in [-K(h),K(h)]\cap \Zh}\left(\int_{A_y^h}f(z)p_t(z)dz-f(y)P_t^h(y)\right)\right\vert+\sum_{y\in \Zh\backslash [-K(h),K(h)]}\vert f(y)\vert P_t^h(y)+\\\nonumber
&& \int_{\mathbb{R}\backslash [-(K(h)-h/2),K(h)-h/2]}\left\vert f(z)\right\vert p_t(z)dz\\\nonumber
&\leq& \underbrace{\left\vert \sum_{y\in \Zh\cap [-K(h),K(h)]}\int_{A_y^h}\left(f(z)-f(y)\right)p_t(z)dz\right\vert}_{\mytagg{exp_conv:1}}+\underbrace{\left\vert \sum_{y\in\Zh\cap [-K(h),K(h)]}\int_{A_y^h}f(y)\left(p_t(z)-p_t(y)\right) dz\right\vert}_{\mytagg{exp_conv:2}}+\\\nonumber
&&\underbrace{\left\vert\sum_{y\in \Zh\cap [-K(h),K(h)]}f(y)h\left[p_t(y)-\frac{1}{h}P^h_t(y)\right]\right\vert}_{\mytagg{exp_conv:3}}+\\\nonumber
&&\underbrace{\left(\frac{\vert f\vert}{\vert g\vert}\lor \frac{\vert f\vert}{\vert g\vert}\circ (-\mathrm{id}_\mathbb{R})\right)\left(K(h) \right)\EE[g\circ X_t^h]}_{\mytagg{exp_conv:4}}+\underbrace{\left(\frac{\vert f\vert}{\vert g\vert}\lor \frac{\vert f\vert}{\vert g\vert}\circ (-\mathrm{id}_\mathbb{R})\right)\left(K(h)-h/2 \right)\EE[g\circ X_t]}_{\mytagg{exp_conv:5}}.
\end{eqnarray}
Thanks to Proposition~\ref{proposition:gmoments}, and the fact that $\vert f\vert/\vert g\vert$ is ultimately nonincreasing, \ref{exp_conv:4} \& \ref{exp_conv:5} are bounded (modulo a multiplicative constant) by $\frac{\vert f\vert}{\vert g\vert}(K(h)-h/2)\lor \frac{\vert f\vert}{\vert g\vert}(-(K(h)-h/2))$. From the Lipschitz property of $f$, submultiplicativity and local boundedness of $g$, and the fact that $\EE[g\circ X_t]<\infty$, we obtain \ref{exp_conv:1} is of order $O(h)$. By the local boundedness and eventual monotonicity of $\vert f\vert$, the Lipschitz nature of $p_t$ and the fact that $\int\vert f\vert>0$, \ref{exp_conv:2} is bounded (modulo a multiplicative constant) by $h\int_{[-(K(h)+h),K(h)+h]}\vert f\vert$. Finally, a similar remark pertains to \ref{exp_conv:3}, but with $\Delta_t(h)$ in place of $h$. Combining these, using once again $\int \vert f \vert>0$, yields the desired result, since we may finally replace $K(h)$ by $(K(h)-h)\lor 0$.
\end{proof}

\subsection{Algorithm}\label{subsection:algorithm}
From a numerical perspective we must ultimately consider the processes $X^h$ on a finite state space, which we take to be $S^h_M:=\{x\in \Zh^d:\vert x\vert\leq M\}$ ($M>0$, $h\in (0,h_\star)$). We let $\hat{Q}^h$ denote the sub-Markov generator got from $Q^h$ by restriction to $S^h_M$, and we let $\hat{X}^h$ be the corresponding Markov chain got by killing $X^h$ at the time $T^h_M:=\inf\{t\geq 0:\vert X_t^h\vert>M\}$, sending it to the coffin state $\partial$ thereafter.

Then the basis for the numerical evaluations is the observation that for a (finite state space) Markov chain $Y$ with generator matrix $Q$, the probability $\PP^y(Y_t=z)$ (resp. the expectation $\EE^y[f\circ Y]$, when defined) is given by $(e^{tQ})_{yz}$ (resp. $(e^{t Q}f)(y)$). With this in mind we propose the: 
\begin{center}
\noindent\fbox{
    \parbox{0.8\textwidth}{
\begin{center}\textbf{Sketch algorithm}\end{center}
\begin{enumerate}[(i)]
\item Choose $\{h,M\}\subset (0,\infty)$.
\item Calculate, for the truncated sub-Markov generator $\hat{Q}^h$, the matrix exponential $\exp\{t \hat{Q}^h\}$ or action $\exp\{t\hat{Q}^h\}f$ thereof (where $f$ is a suitable vector). 
\item Adjust truncation parameter $M$, if needed, and discretization parameter $h$, until sufficient precision has been established. 
\end{enumerate}
    }
}
\end{center}
Two questions now deserve attention: (1) what is the truncation error and (2) what is the expected cost of this algorithm. We address both in turn. 

First, with a view to the localization/truncation error, we shall find use of the following:

\begin{proposition}\label{propositiion:bounded_supremum}
Let $g: [0,\infty)\to [0,\infty)$ be nondecreasing, continuous and submultiplicative, with $\lim_{+\infty}g=+\infty$. Let $t>0$ and denote by:
$$X^\star_t=\sup_{s\in [0,t]}\vert X_s\vert,\quad X^{h\star}_t=\sup_{s\in [0,t]}\vert X^h_s\vert,$$ the running suprema of $\vert X\vert$ and of $\vert X^h\vert$, $h\in (0,h_\star)$, respectively. Suppose furthermore $\EE[g\circ \vert X_t\vert]<\infty$. Then $\EE [g\circ X^\star_t]<\infty$ and, moreover, there is some $h_0\in (0,h_\star]$ such that $$\sup_{h\in (0,h_0)}\EE[g\circ X^ {h\star}_t]<\infty.$$ 
\end{proposition}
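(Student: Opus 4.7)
The strategy is to prove a classical Lévy-process maximal inequality and combine it with Proposition~\ref{proposition:gmoments} applied to the function $\tilde g(x) := g(|x|)$ on $\mathbb{R}^d$. Observe first that $\tilde g$ is locally bounded (as $g$ is continuous) and submultiplicative: with $a$ the submultiplicativity constant of $g$ from~\eqref{eq:submultiplicativity}, and by monotonicity of $g$ together with subadditivity of the norm,
\begin{equation*}
\tilde g(x+y)=g(|x+y|)\leq g(|x|+|y|)\leq a\,g(|x|)\,g(|y|)=a\,\tilde g(x)\,\tilde g(y).
\end{equation*}
The hypothesis $\EE[g(|X_t|)]<\infty$ and the converse direction of Proposition~\ref{proposition:gmoments} then yield $\int_{\mathbb{R}^d\setminus[-1,1]^d}\tilde g\,d\Measure<\infty$, while its forward direction delivers $h_0\in(0,h_\star]$ with $\sup_{h\in(0,h_0)}\EE[g(|X^h_t|)]<\infty$. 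Inspection of that proof shows its bounds on both of the (compound-Poisson) pieces of the decomposition used there are finite for all $s\leq t$ uniformly in $h$, giving the stronger
\begin{equation*}
M:=\sup_{h\in(0,h_0)}\sup_{s\in[0,t]}\EE[g(|X^h_s|)]\,\vee\,\sup_{s\in[0,t]}\EE[g(|X_s|)]<\infty.
\end{equation*}

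The maximal inequality I would prove states that for any $\mathbb{R}^d$-valued Lévy process $Y$ and any $r,r_0>0$,
\begin{equation*}
\PP\bigl(\sup\nolimits_{s\leq t}|Y_s|>r\bigr)\cdot\inf_{s\in[0,t]}\PP(|Y_s|\leq r_0)\leq\PP(|Y_t|\geq r-r_0).
\end{equation*}
To establish it, set $\tau:=\inf\{s\geq 0:|Y_s|>r\}$; right-continuity of paths gives $|Y_\tau|\geq r$ on $\{\tau\leq t\}$, so $\{\tau\leq t,\,|Y_t-Y_\tau|\leq r_0\}\subset\{|Y_t|\geq r-r_0\}$, and since $(Y_{\tau+\cdot}-Y_\tau)$ is an independent copy of $Y$ by the strong Markov property, the inequality follows after noting $\PP(\sup_{s\leq t}|Y_s|>r)\leq\PP(\tau\leq t)$. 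Markov's inequality $\PP(|Y_s|>r_0)\leq\EE[g(|Y_s|)]/g(r_0)$ (valid once $g(r_0)>0$, which is eventually ensured by $\lim_{+\infty}g=+\infty$) combined with the bound $M$ lets one fix a single $r_0$ for which $\inf_{s\leq t}\PP(|X_s|\leq r_0)$ and $\inf_{h\in(0,h_0),\,s\in[0,t]}\PP(|X^h_s|\leq r_0)$ are both $\geq 1/2$.

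The proof is then concluded via the layer-cake identity, the change of variable $u=r-r_0$, and submultiplicativity of $g$:
\begin{align*}
\EE[g(X^{h\star}_t)] &= g(0)+\int_0^\infty\PP\bigl(X^{h\star}_t>r\bigr)\,dg(r)\leq g(r_0)+2\int_0^\infty\PP\bigl(|X^h_t|\geq u\bigr)\,dg(u+r_0)\\
&= 2\EE[g(|X^h_t|+r_0)]-g(r_0)\leq 2a\,g(r_0)\,\EE[g(|X^h_t|)]\leq 2a\,g(r_0)\,M,
\end{align*}
uniformly in $h\in(0,h_0)$, and the same computation with $X$ in place of $X^h$ handles $X^\star_t$. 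The main technical obstacle is the uniform-in-$(s,h)$ moment bound $M$: this is not in the statement of Proposition~\ref{proposition:gmoments} but falls out of its proof essentially for free, since the relevant estimates on the two compound-Poisson components are (up to a constant factor that is itself locally uniform in $h$) monotone in $s$.
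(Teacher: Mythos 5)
Your proof is correct, but it takes a genuinely different route from the paper's. The paper does not reprove a maximal inequality: it cites \cite[p.~166, Theorem 25.18]{sato} outright for $\EE[g\circ X^\star_t]<\infty$, and for the uniformity in $h$ it extracts from the proof of that theorem the bound $\EE[\tilde g\circ(X^{h\star}_t-b);\,X^{h\star}_t>b]\le\EE[\tilde g\circ\vert X^h_t\vert]/\PP(X^{h\star}_t\le b/2)$, obtaining the required non-degeneracy $\inf_{h\in(0,h_0)}\PP(X^{h\star}_t\le b/2)>0$ from the Skorokhod convergence $X^h\to X$, a.s.\ continuity of the running-supremum functional, and the Portmanteau theorem. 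You instead prove the Ottaviani-type maximal inequality from scratch (first passage time plus strong Markov property, which is valid here since $X^h$ is a compound Poisson, hence L\'evy, process), and you replace the weak-convergence/Portmanteau step by a uniform-in-$(s,h)$ moment bound $M$ combined with Markov's inequality to fix a single $r_0$ with $\inf_{s,h}\PP(\vert X^h_s\vert\le r_0)\ge 1/2$. Your route is more self-contained --- it treats $X$ and $X^h$ by one and the same argument and does not lean on the internals of Sato's proof --- at the price of needing the uniform-in-$s$ strengthening of Proposition~\ref{proposition:gmoments}, which you only assert ``by inspection''. That assertion is true: the compound Poisson estimate becomes uniform in $s\le t$ after bounding $e^{-s\Measure^h_1(\mathbb{R}^d)}\le 1$ and $s^n\le t^n$, and Cauchy's estimate applies with $\sup_{\vert z\vert=r}\vert e^{s\Psi^h_2(z)}\vert\le\exp\{t\sup_{\vert z\vert=r}\vert\Psi^h_2(z)\vert\}$ uniformly in $s\le t$ and in small $h$ by the locally uniform convergence $\Psi^h_2\to\Psi_2$. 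But since this is a claim the paper's argument never needs, if you keep your route you should write out that verification rather than leave it to inspection.
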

\begin{remark}\label{remark:bounded_supremum}
The function $g\circ \vert \cdot\vert:\mathbb{R}^d\to[0,\infty)$ is measurable, submultiplicative and locally bounded, so for a condition on the L\'evy measure equivalent to $\EE[g\circ X_t]<\infty$ see Proposition~\ref{proposition:gmoments}. 
\end{remark}
We prove Proposition~\ref{propositiion:bounded_supremum} below, but first let us show its relation to the truncation error. For a function $f:\Zh^d\to \mathbb{R}$, we extend its domain to $\Zh^d\cup \{\partial\}$, by stipulating that $f(\partial)=0$. The following (very crude) estimates may then be made:

\begin{corollary}\label{corollary:truncation}
Fix $t>0$. Assume the setting of Proposition~\ref{propositiion:bounded_supremum}. There is some $h_0\in (0,h_\star]$  and then $C:=\sup_{h\in (0,h_0)}\EE[g\circ X^ {h\star}_t]<\infty$, such that the following two claims hold:
\begin{enumerate}[(i)]
\item\label{corollary:one} For all $h\in (0,h_0)$: $$\sum_{x\in \Zh^d}\vert \PP(X^h_t=x)-\PP(\hat{X}^h_t=x)\vert\leq \PP(T^h_M< t)\leq C/g(M).$$
\item\label{corollary:two} Let $f:\Zh^d\to \mathbb{R}$ and suppose $\vert f\vert\leq \tilde{f}\circ \vert \cdot \vert$, with $\tilde{f}:[0,\infty)\to [0,\infty)$ nondecreasing and such that $\tilde{f}/g$ is (resp. ultimately) nonincreasing. Then for all (resp. sufficiently large) $M>0$ and $h\in (0,h_0)$: $$\vert \EE[f\circ X^h_t]-\EE[f\circ \hat{X}^h_t]\vert\leq C\left(\frac{\tilde{f}}{g}\right)(M).$$
\end{enumerate}
\end{corollary}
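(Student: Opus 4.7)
My plan rests on the pathwise coupling built into the very definition of $\hat X^h$: for $s<T^h_M$ we have $\hat X^h_s=X^h_s$, and $\hat X^h$ is sent to $\partial$ from $T^h_M$ onward. Consequently, on $\{T^h_M\geq t\}$ we have $\hat X^h_t=X^h_t\in\Zh^d$, while on $\{T^h_M<t\}$ we have $\hat X^h_t=\partial$. Set $C:=\sup_{h\in(0,h_0)}\EE[g\circ X^{h\star}_t]$, which is finite for a suitable $h_0\in(0,h_\star]$ by Proposition~\ref{propositiion:bounded_supremum}. The key auxiliary observation, which I would verify at the outset, is that (up to a null set, using right-continuity of the CTMC paths and the continuity of $g$) $\{T^h_M<t\}=\{X^{h\star}_t>M\}$, so that a Markov inequality weighted by $g$ (using that $g$ is nondecreasing) yields
$$
\PP(T^h_M<t)=\PP(X^{h\star}_t>M)\leq \frac{\EE[g(X^{h\star}_t)]}{g(M)}\leq \frac{C}{g(M)}.
$$

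For part~\ref{corollary:one}, I would condition on the exit event: for each $x\in\Zh^d$, the coupling gives
$$
\PP(X^h_t=x)-\PP(\hat X^h_t=x)=\PP\bigl(X^h_t=x,\,T^h_M<t\bigr)\geq 0,
$$
since the two probabilities coincide on $\{T^h_M\geq t\}$ and the second vanishes on $\{T^h_M<t\}$ (as $\hat X^h_t=\partial\notin\Zh^d$ there). Summing over $x\in\Zh^d$ and using that $X^h_t\in\Zh^d$ always collapses the right-hand side to exactly $\PP(T^h_M<t)$, at which point the displayed Markov bound finishes the claim.

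For part~\ref{corollary:two}, the same coupling (together with the convention $f(\partial)=0$) gives
$$
\EE[f\circ X^h_t]-\EE[f\circ \hat X^h_t]=\EE\bigl[f(X^h_t)\,\mathbbm{1}_{\{T^h_M<t\}}\bigr].
$$
On the event $\{T^h_M<t\}$ one has $|X^h_t|\leq X^{h\star}_t$ and $X^{h\star}_t>M$, so by monotonicity of $\tilde f$ and the (ultimate) nonincreasingness of $\tilde f/g$ (assume $M$ large enough in the "ultimately" case),
$$
|f(X^h_t)|\leq \tilde f(|X^h_t|)\leq \tilde f(X^{h\star}_t)=\frac{\tilde f}{g}(X^{h\star}_t)\,g(X^{h\star}_t)\leq \frac{\tilde f(M)}{g(M)}\,g(X^{h\star}_t).
$$
Taking expectations and invoking the definition of $C$ produces the asserted bound $C\,(\tilde f/g)(M)$.

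The argument is essentially pathwise coupling followed by a weighted Markov inequality; I expect no genuine obstacle beyond correctly identifying $\{T^h_M<t\}$ with $\{X^{h\star}_t>M\}$, which is precisely where the strength of Proposition~\ref{propositiion:bounded_supremum} (uniform-in-$h$ finiteness of $\EE[g\circ X^{h\star}_t]$) is used. The rest is bookkeeping.
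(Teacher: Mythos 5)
Your proposal is correct and follows essentially the same route as the paper: the pathwise coupling $\hat X^h_t=X^h_t$ on $\{T^h_M\geq t\}$, the identification of $\{T^h_M<t\}$ with $\{X^{h\star}_t>M\}$ (only the inclusion $\{T^h_M<t\}\subseteq\{X^{h\star}_t>M\}$ is actually needed, and it holds surely by right-continuity), and the weighted Markov/$\tilde f=(\tilde f/g)\cdot g$ factorisation are exactly the paper's steps. Your explicit verification that $\PP(X^h_t=x)-\PP(\hat X^h_t=x)=\PP(X^h_t=x,\,T^h_M<t)\geq 0$, so that the sum of absolute values collapses to $\PP(T^h_M<t)$, is a small elaboration the paper leaves implicit, but not a different argument.
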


\begin{remark}\label{remark:truncation}
\begin{enumerate}
\item Ad \ref{corollary:one}. Note that $M$ may be taken fixed (i.e. independent of $h$) and chosen so as to satisfy a prescribed level of precision. In that case such a choice may be verified explicitly at least retrospectively: the sub-Markov generator $\hat{Q}^h$ gives rise to the sub-Markov transition matrix $\hat{P}^h_t:=e^{t\hat{Q}^h}$; its deficit (in the row corresponding to state $0$) is precisely the probability $\PP(T^h_M< t)$. 
\item\label{remark:truncation:two} Ad~\ref{corollary:two}.  But $M$ may also be made to depend on $h$, and then let to increase to $+\infty$ as $h\downarrow 0$, in which case it is natural to balance the rate of decay of $\vert \EE[f\circ X^h_t]-\EE[f\circ \hat{X}^h_t]\vert$ against that of $\vert \EE[f\circ X_t]-\EE[f\circ X^h_t]\vert$ (cf. Proposition~\ref{propositon:expectations:growth}). In particular, since $\EE[g\circ \vert X_t\vert]<\infty\Leftrightarrow \EE[g\circ X^\star_t]\Leftrightarrow \int_{\mathbb{R}^d\backslash [-1,1]^d} g\circ \vert\cdot \vert d\measure<\infty$ \cite[p. 159, Theorem 25.3 \& p. 166, Theorem 25.18]{sato}, this problem is essentially analogous to the one in Proposition~\ref{propositon:expectations:growth}. In particular, Remark~\ref{remark:expectations:growth} extends in a straightforward way to account for the truncation error, with $M$ in place of $K(h)-3h/2$.
\end{enumerate}
\end{remark}

\begin{proof}
\ref{corollary:one} follows from the estimate $\sum_{x\in \Zh^d}\vert \PP(X^h_t=x)-\PP(\hat{X}^h_t=x)\vert\leq \PP(T_M^h<t)=\PP(X^{h\star}_t> M)\leq \frac{\EE[g\circ X^{h\star}_t]}{g(M)}$, which is an application of Markov's inequality. When it comes to \ref{corollary:two}, we have for all (resp. sufficiently large) $M>0$: 
\begin{eqnarray*}
&&\vert \EE[f\circ X^h_t]-\EE[f\circ \hat{X}^h_t]\vert\leq\EE\left[\left(\vert f\vert\circ X^h_t\right)\mathbbm{1}(T^h_M<t)\right]\leq \EE\left[\left(\tilde{f}\circ \vert X^h_t\vert\right)\mathbbm{1}(T^h_M<t)\right]\\
&\leq&\EE\left[\left(\tilde{f}\circ X^{h\star}_t\right)\mathbbm{1}(T^h_M<t)\right]=\EE\left[\left(\left(\frac{\tilde{f}}{g}\right)\circ X^{h\star}_t \right)\left(g\circ X^{h\star}_t\right)\mathbbm{1}(X^{h\star}_t>M)\right]\\
&\leq&\left(\frac{\tilde{f}}{g}\right)(M)\EE[g\circ X^{h\star}_t],\\
\end{eqnarray*}
whence the desired conclusion follows.
\end{proof}

\begin{proof} (Of Proposition~\ref{propositiion:bounded_supremum}.)
We refer to \cite[p. 166, Theorem 25.18]{sato} for the proof that $\EE[g\circ X^\star_t]<\infty$. Next, by right continuity of the sample paths of $X$, we may choose $b>0$, such that $\PP(X^*_t\leq b/2)>0$ and we may also insist on $b/2$ being a continuity point of the distribution function of $X^\star_t$ (there being only denumerably many points of discontinuity thereof). Now, $X^h\to X$ as $h\downarrow 0$ w.r.t. the Skorokhod topology on the space of c\`adl\`ag paths. Moreover,  by \cite[p. 339, 2.4 Proposition]{jacod}, the mapping $\Phi:=(\alpha\mapsto \sup_{s\in [0,t]}\vert \alpha(s)\vert):\mathbb{D}([0,\infty),\mathbb{R}^d)\to\mathbb{R}$ is continuous at every point $\alpha$ in the space of c\`adl\`ag paths $\mathbb{D}([0,\infty),\mathbb{R}^d)$, which is continuous at $t$. In particular, $\Phi$ is continuous, a.s. w.r.t. the law of the process $X$ on the Skorokhod space \cite[p. 59, Theorem 11.1]{sato}. By the Portmanteau Theorem, it follows that there is some $h_0\in (0,h_\star]$ such that $\inf _{h\in (0,h_0)}\PP(X^{h\star}_t\leq b/2)>0$. 

Moreover, from the proof of  \cite[p. 166, Theorem 25.18]{sato}, by letting $\tilde{g}:[0,\infty)\to [0,\infty)$ be nondecreasing, continuous, vanishing at zero and agreeing with $g$ on restriction to $[1,\infty)$, we may then show for each $h\in (0,h_\star)$ that: 
$$\EE[\tilde{g}\circ (X^{h\star}_t-b); X^ { h_\star}_t>b]\leq \EE[\tilde{g}\circ \vert X_t^h\vert]/\PP(X^{h\star}_t\leq b/2).$$
Now, since $\EE[g\circ \vert X_t\vert]<\infty$, by Proposition~\ref{proposition:gmoments} (cf. Remark~\ref{remark:bounded_supremum}), there is some $h_0\in (0,h_\star]$ such that $\sup_{h\in (0,h_0)}\EE[g\circ \vert X_t^h\vert]<\infty$, and thus  $\sup_{h\in (0,h_0)}\EE[\tilde{g}\circ \vert X_t^h\vert]<\infty$. 

Combining the above, it follows that for some $h_0\in (0,h_\star]$, $\sup_{h\in (0,h_0)}\EE[\tilde{g}\circ (X^{h\star}_t-b); X^ { h_\star}_t>b]<\infty$ and thus $\sup_{h\in (0,h_0)}\EE[g\circ (X^{h\star}_t-b); X^ { h_\star}_t>b]<\infty$. Finally, an application of submultiplicativity of $g$ allows to conclude. 
\end{proof}

Having thus dealt with the truncation error, let us briefly discuss the cost of our algorithm. 

The latter is clearly governed by the calculation of the matrix exponential, or, resp., of its action on some vector. Indeed, if we consider as fixed the generator matrix $\hat{Q}^h$, and, in particular, its dimension $n\sim (M/h)^d$, then this may typically require $O(n^3)$ \cite{moler,higham_scaling}, resp. $O(n^2)$ \cite{higham_exp_to_vector}, floating point operations. Note, however, that this is a notional complexity analysis of the algorithm. A more detailed argument would ultimately have to specify precisely the particular method used to determine the (resp. action of a) matrix exponential, and, moreover, take into account how $\hat{Q}^h$ (and, possibly, the truncation parameter $M$, cf. Remark~\ref{remark:truncation}) behave as $h\downarrow 0$. Further analysis in this respect goes beyond the desired scope of this paper. 

We finish off by giving some numerical experiments in the univariate case. To compute the action of $\hat{Q}^h$ on a vector we use the MATLAB function \texttt{expmv.m} \cite{higham_exp_to_vector}, unless $\hat{Q}^h$ is sparse, in which case we use the MATLAB function \texttt{expv.m} from \cite{sidje}. 

We begin with transition densities. To shorten notation, fix the time $t=1$ and allow $p:=p_1(0,\cdot)$ and $p^h:=\frac{1}{h}\hat{P}^h_1(0,\cdot)$ ($\hat{P}^h$ being the analogue of $P^h$ for the process $\hat{X}^h$). Note that to evaluate the latter, it is sufficient to compute $(e^{\hat{Q}^h t})_{0\cdot}=e^{(\hat{Q}^h)'t}\mathbbm{1}_{\{0\}}$, where $(\hat{Q}^h)'$ denotes transposition. 

\begin{example}\label{example:BM_drift}
Consider first Brownian motion with drift, $\diffusion=1$, $\drift=1$, $\measure=0$ (scheme 1, $V=0$). We compare the density $p$ with the approximation $p^h$ ($h\in \{1/2^n:n\in \{0,1,2,3\}\}$) on the interval $[-1,1]$ (see Figure~\ref{fig:convergence_BM_drift}), choosing $M=5$. The vector of deficit probabilities $(\PP(T^{1/2^n}_M<t))_{n=0}^3$ corresponding to using this truncation was $(5.9\cdot 10^{-4},1.5\cdot 10^{-4},5.8\cdot 10^{-5},4.4\cdot 10^{-5})$. In this case the matrix $\hat{Q}^h$ is sparse.
\end{example}

\begin{figure}[!htb]
\includegraphics[width=0.8\textwidth]{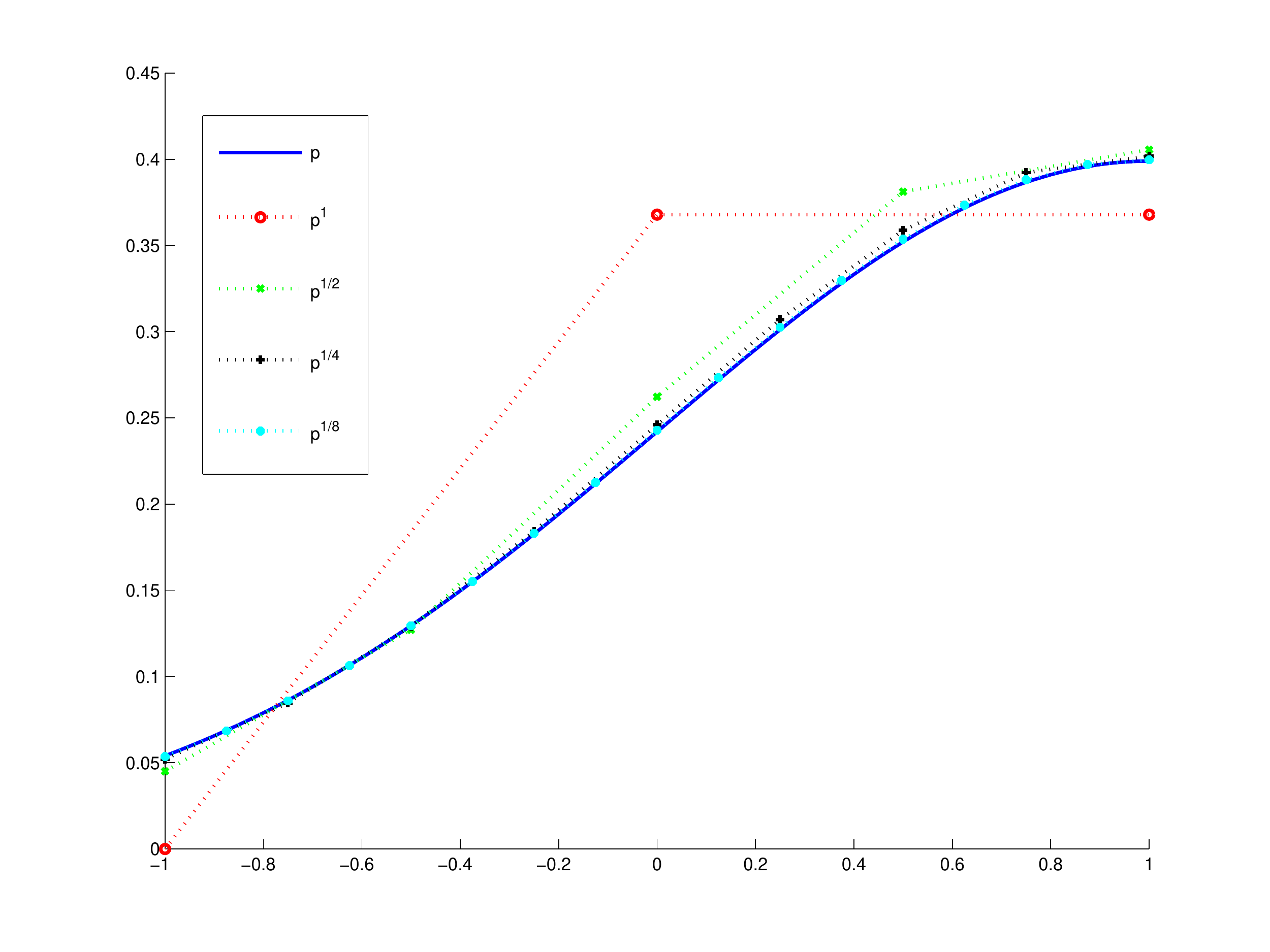}
\caption{Convergence of $p^h$ to $p$ (as $h\downarrow 0$) on the interval $[-1,1]$ for Brownian motion with drift ($\diffusion=\mu=1$, $\measure=0$, scheme 1, $V=0$). See Example~\ref{example:BM_drift} for details.}
\label{fig:convergence_BM_drift}
\end{figure}

\begin{example}\label{example:alpha_stable}
Consider now $\alpha$-stable L\'evy processes, $\diffusion=0$, $\drift=0$, $\measure(dx)=dx/\vert x\vert^{1+\alpha}$ (scheme 2, $V=1$). We compare the density $p$ with $p^h$ on the interval $[0,1]$ (see Figure~\ref{fig:convergence_alpha_stable}).  Computations are made for the vector of alphas given by $(\alpha_k)_{k=1}^4:=(1/2,1,4/3,5/3)$ with corresponding truncation parameters ($M_k)_{k=1}^4=(500,100,30,20)$ resulting in the deficit probabilities (uniformly over the $h$ considered) of $(\PP(T^{h}_{M_k}<t))_{k=1}^4=(1.7\cdot 10^{-1},2.0\cdot 10^{-2},(\text{from }1.7\text{ to }1.8)\cdot 10^{-2},(\text{from }0.94\text{ to }1.01)\cdot 10^{-2})$. The heavy tails of the L\'evy density necessitate a relatively high value of $M$. Nevertheless, excluding the case $\alpha=5/3$, a reduction of $M$ by a factor of $5$ resulted in an absolute change of the approximating densities, which was at most of the order of magnitude of the discretization error itself. Conversely, for $\alpha=1/2$, when the deficit probability is highest and appreciable, increasing $M$ by a factor of $2$, resulted in an absolute change of the calculated densities of the order $10^{-6}$ (uniformly over $h\in \{1,1/2,1/4\}$). Finally, note that $\alpha=1$ gives rise to the Cauchy distribution, whereas otherwise we use the MATLAB function \texttt{stblpdf.m} to get a benchmark density against which a comparison can be made. 
\end{example}

\begin{figure}[!htb]
\includegraphics[width=\textwidth]{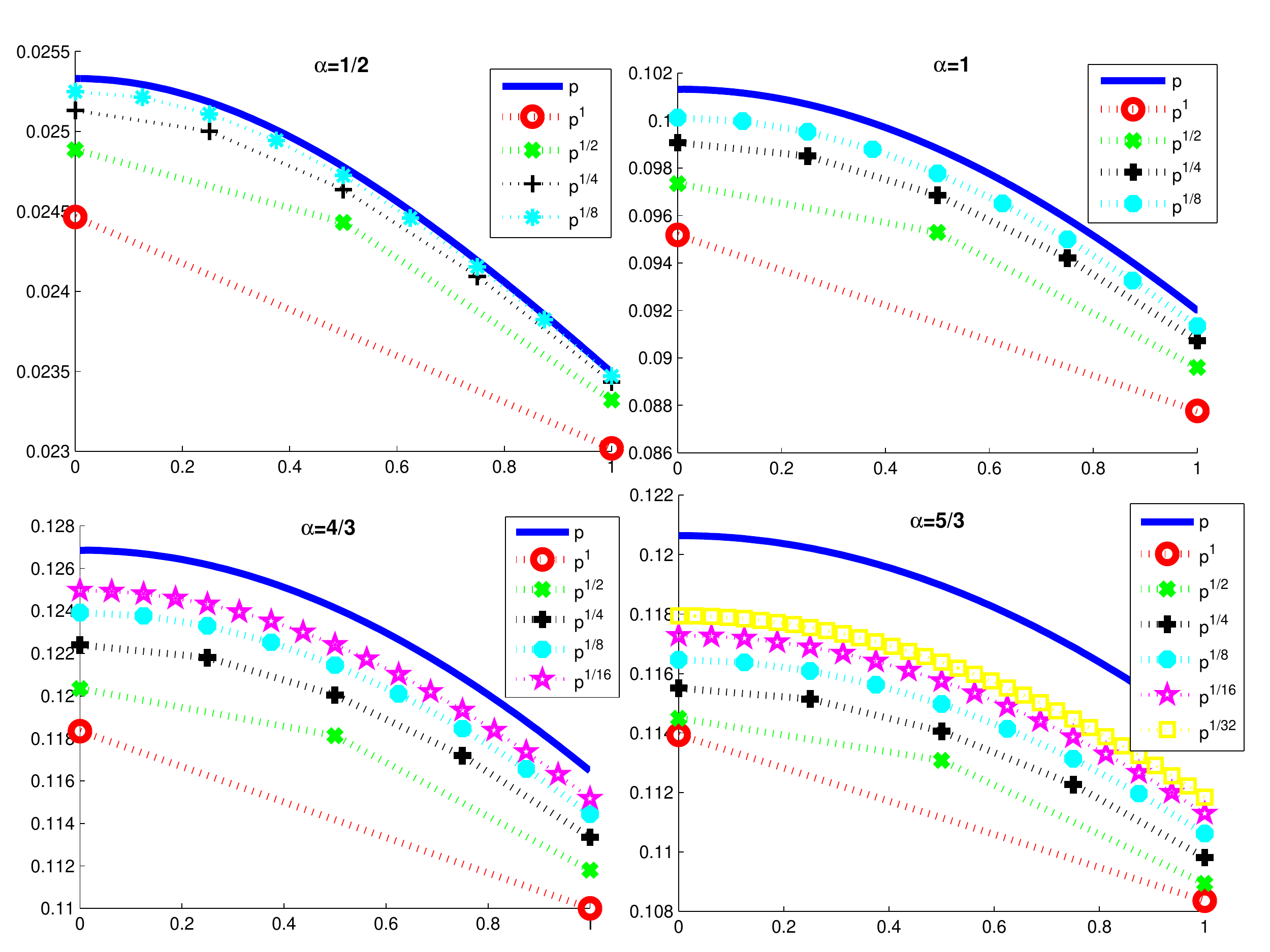}
  \caption{Convergence of $p^h$ to $p$ (as $h\downarrow 0$) on the interval $[0,1]$ for $\alpha$-stable L\'evy processes ($\diffusion=0$, $\drift=0$, $\measure(dx)=dx/\vert x\vert^{1+\alpha}$, scheme 2, $V=1$), $\alpha\in \{1/2,1,4/3,5/3\}$. See Example~\ref{example:alpha_stable} for details. Note that convergence becomes progressively worse as $\alpha\uparrow$, which is \emph{precisely consistent} with Figure~\ref{fig:comparison} and the theoretical order of convergence, this being $O(h^{(2-\alpha)\land 1})$ (up to a slowly varying factor $\log(1/h)$, when $\alpha=1$; and noting that Orey's condition is satisfied with $\epsilon=\alpha$). For example, when $\alpha=5/3$ each successive approximation should be closer to the limit by a factor of $\left(\frac{1}{2}\right)^ {1/3}\doteq 0.8$, as it is.}
  \label{fig:convergence_alpha_stable}
\end{figure}

\begin{example}\label{example:VG}
A particular VG model \cite{madan,mcc} has $\diffusion=0$, $\drift=0$, $\measure(dx)= \frac{e^{-\vert x\vert}}{\vert x\vert}\mathbbm{1}_{\mathbb{R}\backslash \{0\}}(x)dx$ (scheme 2, $V=1$). Again we compare $p$ with $p^h$  ($h\in \{1/2^n:n\in \{0,1,2,3\}\}$) on the interval $[0,1]$ (see Figure~\ref{fig:convergence_VG}), choosing $M=5$. The vector of deficit probabilities $(\PP(T^{1/2^n}_M<t))_{n=0}^3$ corresponding to using this truncation was $(5.2\cdot 10^{-3},6.4\cdot 10^{-3},7.2\cdot 10^{-3},7.6\cdot 10^{-3})$. The density $p$ is given explicitly by $(x\mapsto e^{-\vert x\vert}/2)$. 
\end{example}

\begin{figure}[!htb]
\includegraphics[width=0.9\textwidth]{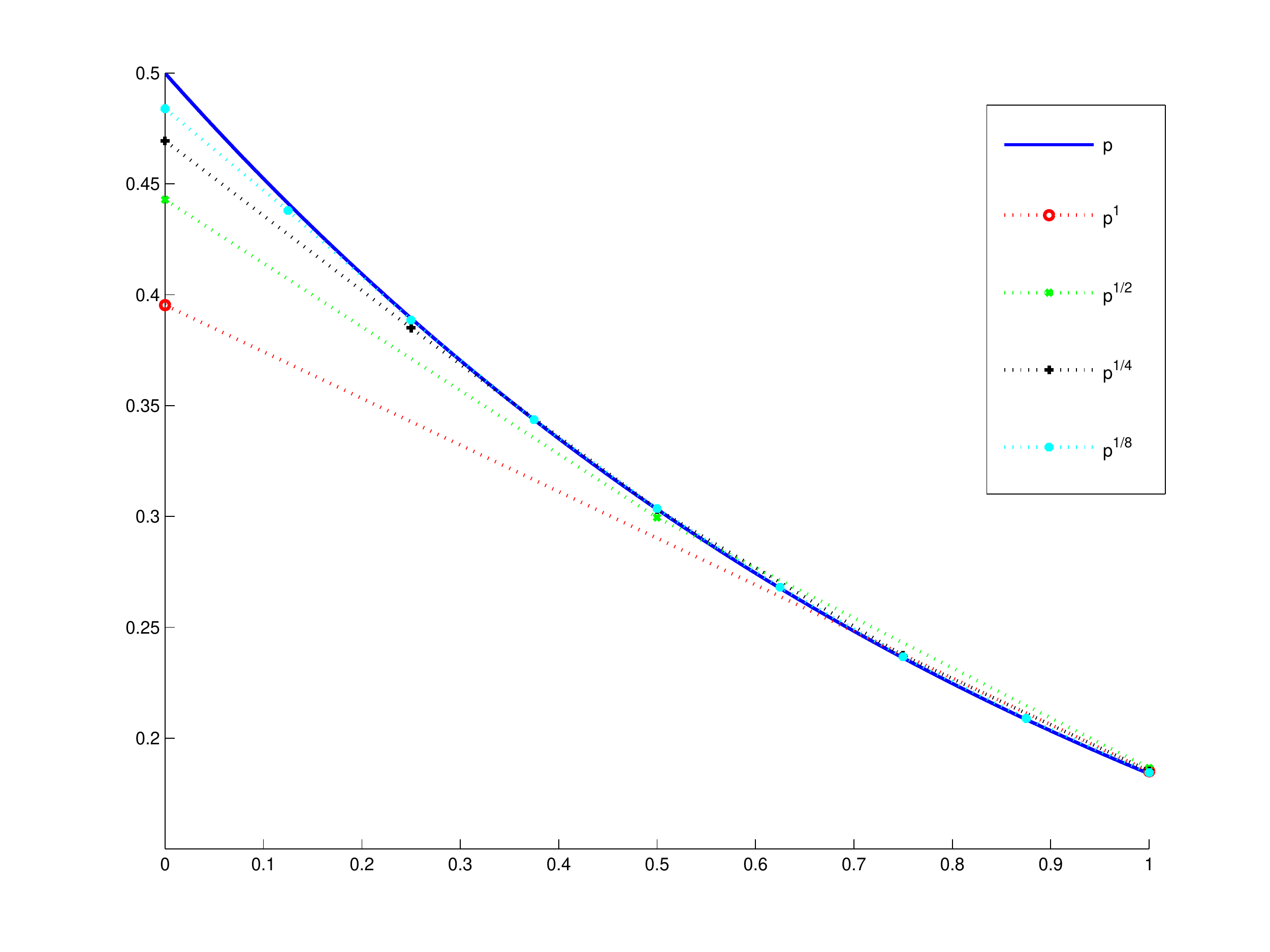}
\caption{Convergence of $p^h$ to $p$ (as $h\downarrow 0$) on the interval $[0,1]$ for the VG model ($\diffusion=0$, $\drift=0$, $\measure(dx)= \frac{e^{-\vert x\vert}}{\vert x\vert}\mathbbm{1}_{\mathbb{R}\backslash \{0\}}(x)dx$, scheme 2, $V=1$). Note that in this case Orey's condition fails, but (at least as evidenced numerically) linear convergence does not.  See Example~\ref{example:VG} for details.}
\label{fig:convergence_VG}
\end{figure}

Finally, to illustrate convergence of expectations, we consider a particular option pricing problem. 

\begin{example}\label{example:options}
Suppose that, under the pricing measure, the stock price process $S=(S_t)_{t\geq 0}$ is given by $S_t=S_0e^{rt+X_t}$, $t\geq 0$, where $S_0$ is the initial price, $r$ is the interest rate, and $X$ is a tempered stable process with L\'evy measure given by: $$\measure(dx)=c\left(\frac{e^{-\lambda_+x}}{x^{1+\alpha}}\mathbbm{1}_{(0,\infty)}(x)+\frac{e^{-\lambda_-\vert x\vert}}{\vert x\vert^{1+\alpha}}\mathbbm{1}_{(-\infty,0)}(x)\right)dx.$$ To satisfy the martingale condition, we must have $\EE[e^{X_t}]\equiv 1$, which in turn uniquely determines the drift $\mu$ (we have, of course, $\diffusion=0$). The price of the European put option with maturity $T$ and strike $K$ at time zero is then given by: $$P(T,K)=e^{-rT}\EE[(K-S_T)^+].$$ We choose the same value for the parameters as \cite{poirot}, namely $S_0=100$, $r=4\%$, $\alpha=1/2$, $c=1/2$, $\lambda_+=3.5$, $\lambda_-=2$ and $T=0.25$, so that we may quote the reference values $P(T,K)$ from there. 

Now, in the present case, $X$ is a process of finite variation, i.e. $\kappa(0)<\infty$, hence convergence of densities is of order $O(h)$, since Orey's condition holds with $\epsilon=1/2$ (scheme 2, $V=1$). Moreover, $\mathbbm{1}_{\mathbb{R}\backslash [-1,1]}\cdot \measure$ integrates $(x\mapsto e^{2\vert x\vert})$, whereas the function $(x\mapsto (K-e^{rt+x})^+)$ is bounded. Pursuant to \eqref{remark:truncation:two} of Remark~\ref{remark:truncation} we thus choose $M=M(h):=\left(\frac{1}{2}\log(1/h)\right)\lor 1$, which by Corollary~\ref{corollary:truncation} and Proposition~\ref{propositon:expectations:growth} (with $K(h)=M(h)$) (cf. also \eqref{remark:expectations:growth:exponential} of Remark~\ref{remark:expectations:growth}) ensures that: $$\vert \hat{P}^h(T,K)-P(T,K)\vert=O(h\log(1/h)),$$ where $\hat{P}^h(T,K):=e^{-rT}\EE[(K-S_0e^{rT+\hat{X}^h_T})^+]$. Table~\ref{table:options} summarizes this convergence on the decreasing sequence $h_n:=1/2^n$, $n\geq 1$. 

In particular, we wish to emphasize that the computations were all (reasonably) fast. For example, to compute the vector  $(\hat{P}^{h_n}(T,K))_{n=1}^9$ with $K=80$, the times (in seconds; entry-by-entry) $( 0.0106  ,  0.0038 ,   0.0044 ,   0.0078  ,  0.0457 ,   0.0367 ,   0.0925 ,   0.4504   , 2.4219)$ were required on an Intel 2.53 GHz processor (times obtained using MATLAB's \texttt{tic-toc} facility). This is much better than, e.g., the Monte Carlo method of \cite{poirot} and comparable with the finite difference method of \cite{contvoltchkova} (VG2 model in \cite[p. 1617, Section 7]{contvoltchkova}).
\end{example}

\begin{table}
\caption{Convergence of the put option price for a CGMY model (scheme 2, $V=1$). See Example~\ref{example:options} for details.}\label{table:options}
\begin{tabular}{|c|c|c|c|c|c|c|c|c|c| }\hline
$K\rightarrow $ & $80$ & $85$ & $90$ & $95$ & $100$ & $105$ & $110$ & $115$ & $120$\\\hline
$P(T,K)\rightarrow$ & 1.7444 & 2.3926& 3.2835& 4.5366& 6.3711& 9.1430& 12.7631& 16.8429& 21.1855\\\hline 
$n$ & \multicolumn{9}{|c|}{$\hat{P}^{h_n}(T,K)-P(T,K)$}\\\hline
1 &     0.6411  &  0.5422  &   0.2006  & -0.5033 &  -1.7885  & -0.8227  &   0.0970  &  0.5570  &   0.7542\\\hline
2 &   -0.1089  & 0.2816 &   0.4295  &  0.2151 &  -0.5806 &   0.0975  &  0.5341 &   0.5109  &   0.2250\\\hline
3 &    -0.2271 &  -0.1596 &   -0.1928  &  0.0920   &-0.2046  &  0.1405  &  0.0348  & -0.4356  & -0.3937\\\hline
4 &    -0.0904  & -0.0753 &  -0.0517  & -0.0442  &  0.0652  &   0.1487 &   0.0057  & -0.1511  & -0.1838\\\hline
5 &    -0.0411 &  -0.0338 &  -0.0193 &  -0.0053 &   0.0679 &   0.0569 &  -0.0073  & -0.0616  & -0.0833\\\hline
6 &   -0.0184  & -0.0163  & -0.0081  &  0.0022 &   0.0347 &   0.0314 &  -0.0033 &  -0.0244  & -0.0384\\\hline
7 &    -0.0079  &   -0.0069  &   -0.0040  &    0.0019  &    0.0152   &   0.0109  &   -0.0034  &   -0.0108   &  -0.0164\\\hline
8 &    -0.0034   & -0.0029  &  -0.0016 &    0.0011 &    0.0072 &    0.0053 &   -0.0012 &   -0.0048 &   -0.0070\\\hline
9 &    -0.0014  & -0.0012  & -0.0007  &  0.0006  &  0.0033  &  0.0026 &  -0.0004  & -0.0020 &  -0.0030\\\hline

\end{tabular}

\end{table}

In conclusion, the above numerical experiments serve to indicate that our method behaves robustly when the Blumenthal-Getoor index of the L\'evy measure is not too close to 2 (in particular, if the pure-jump part has finite variation). It does less well if this is not the case, since then the discretisation parameter $h$ must be chosen small, which is expensive in terms of numerics (viz. the size of $\hat{Q}^h$).

\bibliographystyle{plain}
\bibliography{Biblio_density_approximations}

\end{document}